\newtheorem{theorem}{Theorem}[section]
\newtheorem{lemma}[theorem]{Lemma}
\newtheorem{corollary}[theorem]{Corollary}
\newtheorem{conjecture}[theorem]{Conjecture}
\theoremstyle{definition}
\newtheorem{definition}[theorem]{Definition}
\newtheorem{example}[theorem]{Example}
\theoremstyle{remark}
\newtheorem{remark}[theorem]{Remark}
\numberwithin{equation}{section}
\def \Gdeg {\mathrm{Gdeg}}
\begin{document}

\title[To prove the Four Color Theorem unplugged]{A renewal approach to prove\\ the Four Color Theorem unplugged\\[1.5ex]{\footnotesize Part III:}\\[0.5ex] Diamond routes, canal lines and $\Sigma$-adjustments}

\author{Shu-Chung Liu}
\address{Institute of Learning Sciences and Technologies, National Tsing Hua University, Hsinchu, Taiwan}
\email{sc.liu@mx.nthu.edu.tw}


\subjclass[2020]{Primary 05C10; 05C15}

\date{\today}


\keywords{Four Color Theorem; Kempe chain; edge-coloring; RGB-tiling; diamond route; canal line; $\Sigma$-adjustment}

\begin{abstract}
This is the last part of three episodes to demonstrate a renewal approach for proving the Four Color Theorem without checking by a computer. The first and the second episodes have subtitles: ``RGB-tilings on maximal planar graphs'' and ``R/G/B Kempe chains in an extremum non-4-colorable MPG,'' where R/G/B stand for red, green and blue colors to paint on edges and an MPG stands for a maximal planar graph. We focus on an extremum non-4-colorable MPG $EP$ in the whole paper. In this part we introduce three tools based on RGB-tilings. They are diamond routes, normal and generalized canal lines or rings and $\Sigma$-adjustments. Using these tools, we show a major result of this paper:  no four vertices of degree 5 form a diamond in any extremum $EP$.  
\end{abstract}     

\setcounter{section}{13}
\setcounter{figure}{32}

\maketitle

\section{Diamond routes} \label{sec:DR} 

This section and the next one are independent. We introduce a method to build a green (or red/blue) tiling on an MPG step by step. At the beginning of this introduction, we shall first assume the existence of a green tiling. Given an MPG or a semi-MPG, say $M$, with a green tiling $T_g:E(M)\rightarrow \{\text{green, black}\}$, we associate every green edge, say $e_i$, with a green $e_i$-diamond most of the time; but a green $e_i$-triangle if $e_i$ is along an outer facet of $M$. 

   \begin{definition}
Given $M$ with a green green tiling $T_g$, a \emph{green diamond route} $\mathbf{dr}_g$ in $M$ is a sequence $\mathbf{dr}_g:=(e_1, e_2, \ldots, e_k)$ of \underline{distinct} green edges such that every consecutive pair $e_i$- and $e_{i+1}$-diamonds or triangles share a common black edge. 
   \end{definition}

   \begin{definition}
Continue from previous definition. We may consider $\vec{\mathbf{dr}}_g:=(e_1\rightarrow e_2\rightarrow \ldots\rightarrow e_h)$ as a \emph{directed} green diamond route in $M$. In the directed mode, the triangles involved in this particular route are separated into two classes:  \emph{out-triangles} and \emph{in-triangles}.
   \end{definition}

For instance, in Figure~\ref{fig:directedDR} we have a 7-semi-MPG with a green tiling which unfortunately has a green 5-cycle, namely $C_5:=v_5$-$v_6$-$v_7$-$v_c$-$v_a$-$v_5$. The two graphs show two different ways to demonstrate $(e_1, e_2, \ldots, e_6)$ by marking edges as sequence and $\vec{\mathbf{dr}}_g$ by a  green-gray dashed line with direction.  
   \begin{figure}[h]
   \begin{center}
   \includegraphics[scale=0.9]{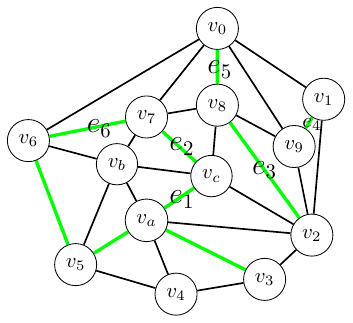}
   \includegraphics[scale=0.9]{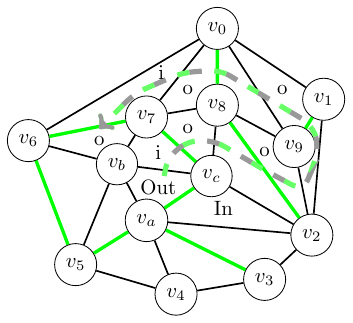}
   \end{center}
   \caption{$\mathbf{dr}_g:=(e_1, e_2, \ldots, e_6)$ and  $\vec{\mathbf{dr}}_g:=(e_1\rightarrow e_2\rightarrow \ldots\rightarrow e_6)$}  \label{fig:directedDR}
   \end{figure}
On the second graph of Figure~\ref{fig:directedDR}, we also indicate the two triangles of $e_1$-diamond as the \emph{initial-in} (marked by ``In'') and the \emph{initial-out} (marked by ``Out''). And then there are some in-triangles (marked by ``i'') and out-triangles (marked by ``o'') along the green-gray dashed line $\vec{\mathbf{dr}}_g$. Actually we miss several ``i'' to mark, because if any green triangle is marked by ``o'' then the triangle on the other side of the its  $e_i$-diamond must be the corresponding in-triangle.

It is nature to define a \emph{green diamond ring} in $M$.  For instance, let us use the first graph in Figure~\ref{fig:directedDR} again. We find that $\mathbf{dr}_g:=(e_2, e_3, e_4, e_5, e_6, e_2)$ forms a green diamond ring. Rings are good to switch the roles of in-triangles and out-triangles \underline{for themselves}.  

\subsection{Green diamond route vs green canal lines}

In Section~\ref{RGB1-sec:Grandline} we introduced canal lines with each them along two parallel canal banks. At that time we mentioned that we shall treat all triangle as nodes and came out the traditional idea about the dual graph of $M$. Now let us formally define our \emph{dual graph} of $(M;T_g)$ or $(M;T_{rgb})$, denote by $DG(M;T_g)$ or $DG(M;T_{rgb})$, which is a little bit different from the traditional one. 

Let us use Figure~\ref{fig:DGM} to explain. The set of nodes $V(DG(M;T_g)$ consists of all triangle facets (circles) and some \emph{pseudo nodes} (rectangles) near-by and along every outer facet. Notice that the traditional dual graph will set only one node for every outer facet but we rather set $k$ pseudo nodes for a $k$-gon outer facet. Every link of $DG(M;T_g)$ crosses exactly an edge $e$ in $E(M)$, and we shall color every link according to the color of $e$ in $T_g$. By the similar way, $DG(M;T_{rgb})$ can be defined. 
\begin{figure}[h]
\begin{center}
   \includegraphics[scale=0.9]{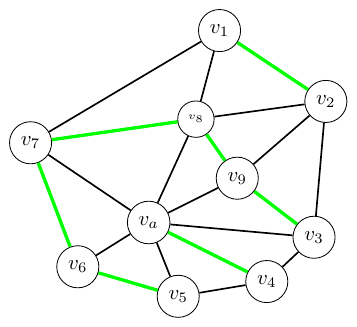}
   \includegraphics[scale=0.9]{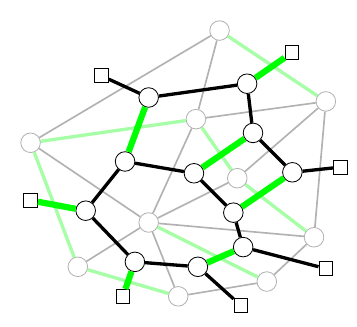}
\end{center}
\caption{vertices and edges in $(M;T_g)$; nodes and links in $DG(M;T_g)$} \label{fig:DGM}
\end{figure}   

Given $DG(M;T_g)$, a green canal line $gCL_i$ is a path going through black links in $DG(M;T_g)$. If $DG(M;T_{rgb})$ is provided, a green canal line $gCL_i$ is a path going through red and blue links alternately. All these $gCL_i$ are destined when $T_g$ or $T_{rgb}$ is given.

Given $DG(M;T_g)$, a green diamond route $\mathbf{dr}_g$ or directed one $\vec{\mathbf{dr}}_g$ is a path going through green and black links alternately. If $DG(M;T_{rgb})$ is provided, all red and blue edges are treated as black.  These $\mathbf{dr}_g$ or $\vec{\mathbf{dr}}_g$ are various for picking one of two black edges many times.

Although $DG(M;T_g)$ is much clear to show diamond route and canal lines, turning to this new graph it really is bothering us; so we keep using $DG(M;T_g)$ and $DG(M;T_{rgb})$.

\subsection{Amending $T_g$ by a green diamond route or ring}
In our study, there are two major ways to amending $T_g$ or $T_{rgb}$, and if possible we wish to break up unwelcome green odd-cycles. The first way is what we are going to introduce, and the second way is discussed in Section~\ref{sec:Gcanallines}. Let us see the following twp examples.

   \begin{example}
Let us adopt the 7-semi MPG $M$ and $T_g(M)$ in Figure~\ref{fig:directedDR} as the original setting, and we are going to break the green odd-cycle $C_5:=v_5$-$v_6$-$v_7$-$v_c$-$v_a$-$v_5$ in this original $T_g(M)$. Applying edge-color-switch along $\mathbf{dr}_g:=(e_2, e_3 \ldots, e_6, e_2)$, we get the first graph in Figure~\ref{fig:Amending1}. In addition, denote $e_0:=v_3v_a$, and $e_{-1}:=v_5v_a$. Applying edge-color-switch along $\mathbf{dr}'_g:=(e_1, e_0, e_{-1}, e_1)$, we get the second graph in Figure~\ref{fig:Amending1}.
   \begin{figure}[h] 
   \begin{center}
   \includegraphics[scale=0.9]{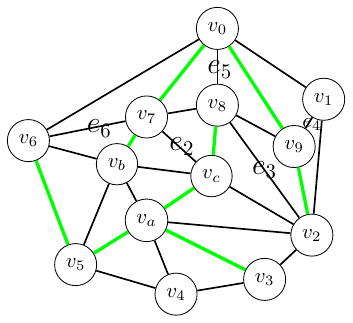}
   \includegraphics[scale=0.9]{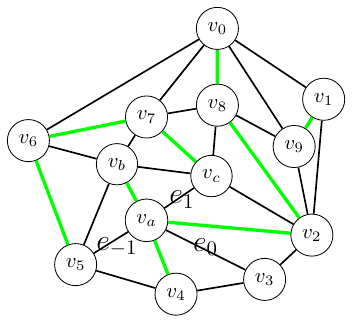}
   \end{center}
\caption{Eliminate a green odd-cycle} \label{fig:Amending1}      
   \end{figure}
Let $e_7:=v_5v_6$. Applying edge-color-switch along $\mathbf{dr}''_g:=(e_0, e_1, e_2, \ldots, e_7)$, we get the first graph in Figure~\ref{fig:Amending2}. Applying edge-color-switch along $\mathbf{dr}'''_g:=(e_4, e_5, e_6, e_7)$, we get the second graph in Figure~\ref{fig:Amending2}.
   \begin{figure}[h]
   \begin{center}
   \includegraphics[scale=0.9]{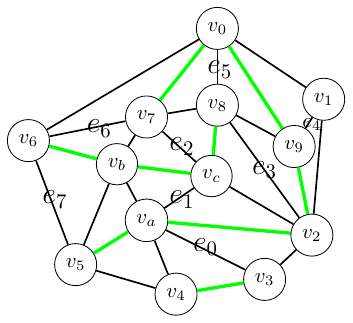}
   \includegraphics[scale=0.9]{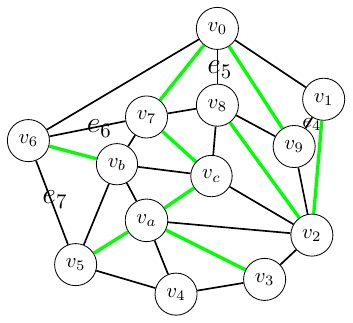}
   \end{center}
\caption{Eliminate a green odd-cycle, more examples} \label{fig:Amending2}         
  \end{figure}   
All these four processes tear the green 5-cycle $C_5$ and create new odd-cycle-free tilings in $M$. Since $M$ is One Piece, each of these four new green tilings is grand and
offers its own 4-coloring function.    
   \end{example}

   \begin{example}
We have a $(7,5)$-semi-MPG $M$ in Figure~\ref{fig:Amending3} and an original G-tiling $T_g(M)$ as the first graph. We also see a green odd-cycle $C_7$ in $T_g(M)$. This green odd-cycle is along the annular shape of $M$. After the first edge-color-switch process along the the green-gray dashed line $\mathbf{dr}_g$, we obtain the middle graph with a new green tiling $T'_g$. Unfortunately $T'_g$ is not grand.   
   \begin{figure}[h]
   \begin{center}
   \includegraphics[scale=0.68]{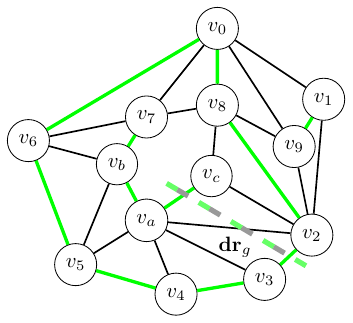}
   \includegraphics[scale=0.68]{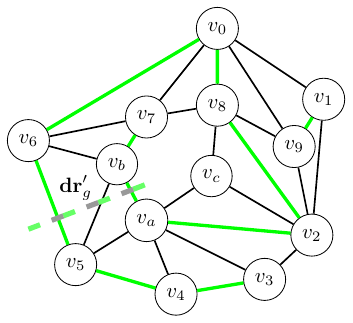}
   \includegraphics[scale=0.68]{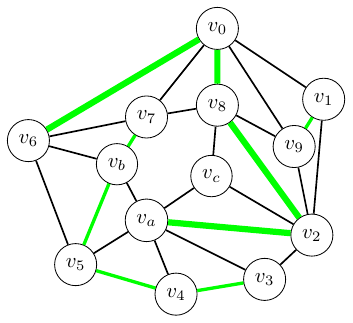}
   \end{center}
\caption{Eliminate a green odd-cycle; the middle is not grand} \label{fig:Amending3}         
  \end{figure}
Keep going! Let us apply the second edge-color-switch process along the the green-gray dashed line $\mathbf{dr}'_g$, and then obtain $T'_g$ as the third graph. Finally we achieve a grand G-tiling$^\ast$ (the abbreviation of ``G-tiling without any green odd-cycle'').
   \end{example}

   \begin{remark}
By the first example, we experience a constructing method to transform a normal green tiling with odd-cycles into a G-tiling$^\ast$. However, this constructing method does not guarantee an odd-cycle-free result, because some other new green odd-cycles might be created after this edge-color-switch process. By the second example, we experience that the grand property might be destroyed. To avoid this awkward situation, we shall choose  a green diamond route that crosses the green odd-cycle in and out. Fortunately, most of time we deal with One Piece; thus any green diamond route must be in-and-out w.r.t.\ every green odd-cycle.
   \end{remark}

\section{Orientation by an initiator} \label{sec:Orientation}

In the previous section, particularly in the second graph of Figure~\ref{fig:directedDR}, we introduced the \emph{initial-in-triangle} (marked by ``In'') and the \emph{initial-out-triangle}  (marked by ``Out''); and then we indicate \emph{in-triangles} (marked by ``i'') and \emph{out-triangles} (marked by ``o'') along a given directed green diamond route  $\vec{\mathbf{dr}}_g:=(e_I=e_0\rightarrow e_1\rightarrow \ldots\rightarrow e_k)$. Usually, we need only mark all out-triangles by ``o'' and ignore ``i''. 

The initial-in-triangle and initial-out-triangle form the \emph{initial-$eI$-diamond}. However, only an initial-$eI$-diamond can have two possible directions by switching $\triangle\text{In}$ and $\triangle\text{Out}$. Thus, as for Figure~\ref{fig:directedDR} we had $eI=e_1$ and we shall denote  $\triangle\text{Out}:=(eI, v_b)_\text{O}$ as the initial-out-triangle (sometimes we just use \text{Out} without a $\triangle$ in from of it), and $\triangle\text{In}:=(eI, v_2)_\text{I}$ as the initial-in-triangle. Once we assign an initial-in-triangle or an initial-out-triangle, we can generate many different green diamond routes and each directed green diamond route $\vec{\mathbf{dr}}_g$ offers its class of out-triangles and in-triangles. 
   \begin{example}   \label{exp:orien}
Let us adopt the right graph in Figure~\ref{fig:directedDR} first, but remove the original directed green diamond route on that graph. In Figure {fig:inoutTriangles}, we build another two routes starting at $(eI, v_b)_\text{O}$. By these two directed green diamond routes together with the route given in the right graph in Figure~\ref{fig:directedDR}, we obtain 10 out-triangles in total that are shown by the right graph in Figure~\ref{fig:inoutTriangles}.   
   \begin{figure}[h]
   \begin{center}
   \includegraphics[scale=0.9]{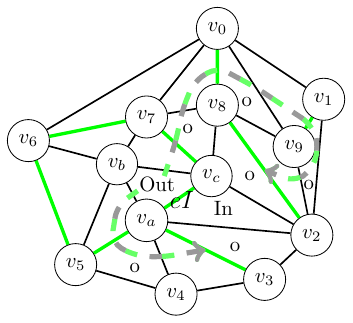}
   \includegraphics[scale=0.9]{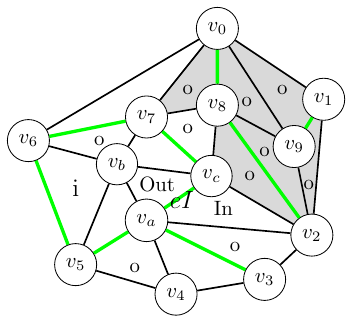}        
   \end{center}
   \caption{$OT(e_I, v_b)_\text{O}=\{\text{out-triangles generated by $(eI, v_b)_\text{O}$}\}$}    
   \label{fig:inoutTriangles}     
   \end{figure}
Most of in-triangles lay on the opposite side of out-triangles with their corresponding green edges in middle. Some exceptional in-triangles happen, for instance, $\triangle v_5v_6v_b$ is an in-triangle without its corresponding out-triangle.  Obviously, these exceptional in-triangles only happen when a green edge lay on the outer facet of $M$. One more interesting observation: there are two out-triangles $\triangle v_av_2v_3$ and $\triangle v_cv_2v_8$ ``\emph{adjacent to}'' the initial-in-triangle $(e_I, v_2)_\text{I}$. This means in $T_g$ at least two green diamond rings passing through $e_I$. Since $e_I$ lays on the green 5-cycle $C_5=v_a$-$v_c$-$v_7$-$v_6$-$v_5$-$v_a$, we now have at least two edge-color-switch processes to break $C_5$.
   \end{example}

With the help of Example~\ref{exp:orien}, we experience the \emph{orientation} of triangles generated by a fixed initial-out-triangle $\triangle\text{Out}$. Given an MPG or semi-MPG $M$ with a green tiling $T_g(M)$, let us choose an particular green edge $eI$ and one of its associating triangle $(eI, u)$ to be the \emph{initial-out-triangle} $\text{Out}:=(e, u)_\text{O}$), and in the same time  \emph{initial-in-triangle} $\triangle\text{In}:=(e, v)_\text{I}$ is chosen unless $e$ is along a outer facet of $M$. A $e$-diamond or $e$-triangle is \emph{reachable} if there is a directed green diamond routes $\vec{\mathbf{dr}}_g=(e_I=e_1\rightarrow e_2\rightarrow \ldots\rightarrow e_k=e)$ for some $k\ge 1$  and all $e_i$ distinct.  If $e$ is not along a outer facet of $M$, at the final two steps of $\vec{\mathbf{dr}}_g$ as we reaching $e$, we see an in-triangle (i) first and then an out-triangle (o). Particularly when $k=1$ and the length of $\vec{\mathbf{dr}}$ is zero that means $(eI, v)$ and  $(eI, u)$ are the only reachable in- and out-triangles respectively by this $\vec{\mathbf{dr}}$. Let denote the sets 
  \begin{eqnarray*}
IT(eI, u)_\text{O}&:=&\{\text{in-triangles generated by $(eI, u)_\text{O}$, including $\triangle\text{In}$}\};\\
OT(eI, u)_\text{O}&:=&\{\text{out-triangles generated by $(eI, u)_\text{O}$, including $\triangle\text{Out}$}\}.  
  \end{eqnarray*}
Let $Tri(M)$ consists of all triangles of $M$. 
We denote the following three disjoint subsets of $Tri(M)$:
   \begin{eqnarray*}
BiT(eI, u)_\text{O}= \{\text{bi-oriented}\} &:=& OT(eI, u)_\text{O} \cap  IT(eI, u)_\text{O};\\
NonT(eI, u)_\text{O}= \{\text{non-oriented}\} &:=& Tri(M)-(OT(eI, u)_\text{O} \cup  IT(eI, u)_\text{O});\\
UniT(eI, u)_\text{O}= \{\text{uni-oriented}\} &:=& Tri(M)-BiT(eI, u)_\text{O}-NonT(eI, u)_\text{O}.
   \end{eqnarray*}
In other worlds, a \emph{non-oriented} triangle or diamond is never reachable by any $\vec{\mathbf{dr}}_g$; a \emph{bi-oriented} triangle or diamond can be reachable from two different directions.

   \begin{example}   \label{exp:orient2}
Let us still use the right graph in Figure~\ref{fig:inoutTriangles} with $\triangle\text{Out}=(eI,v_b)_\text{O}$ in Example~\ref{exp:orien}. We have  
$BiT(eI, u)_\text{O}=\{\text{triangles inside $v_0$-$v_1$-$v_2$-$v_c$-$v_8$-$v_7$-$v_0$}\}$ and $NonT(eI, u)_\text{O}=\emptyset$. We shade the region of this $BiT(eI, u)_\text{O}$ by gray. Now we show another two examples in Figure~\ref{fig:BiTNonTUniT}. On the first graph, we assign $\triangle\text{Out}:=(eI',v_4)_\text{O}$.
   \begin{figure}[h]
   \begin{center}
   \includegraphics[scale=0.9]{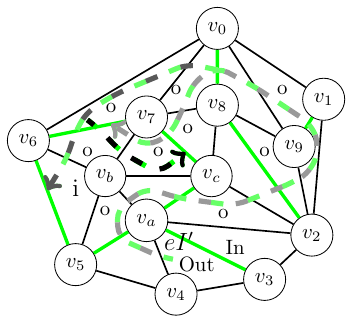}
   \includegraphics[scale=0.9]{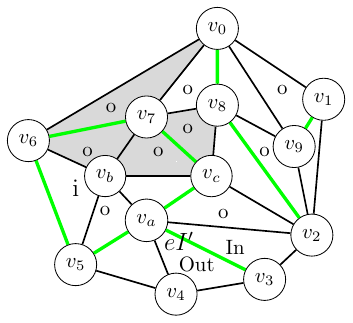}
   \includegraphics[scale=0.9]{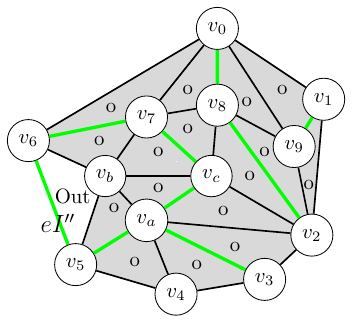}    \caption{$BiT$ in gray}    
   \label{fig:BiTNonTUniT}    
   \end{center}   
   \end{figure}
We demonstrate three directed green diamond routs to determine  that $BiT(eI', v_4)_\text{O}=\{\text{triangles inside $v_0$-$v_7$-$v_8$-$v_c$-$v_b$-$v_6$-$v_0$}\}$ and $NonT(eI', v_4)_\text{O}=\emptyset$. We can see the gray area for $BiT(eI', v_4)_\text{O}$ in the second graph. The third graph is interesting, where we assign $\triangle\text{Out}=(eI'', v_b)_\text{O}$ and we find that $OT((eI'', v_b)_\text{O})=Tri(M)$. That means starting at $(eI'', v_b)_\text{O}$ and there is a $\vec{\mathbf{dr}}_g$ reaching any triangle as an out-triangle. In this way, $BiT(eI', v_4)_\text{O}=Tri(M)-\{(eI'', v_b)\}$, i.e., almost all triangles are bi-oriented. There is a hidden meaning: Suppose $M$ is a subgraph of $\hat{M}$ and suppose a green diamond route from outside of $M$ enters through the gate $eI''$. Since all triangles in $Tri(M)$ are bi-oriented 
except $(eI'', v_b)$. Say $\triangle v_iv_jv_k$ is bi-oriented and $v_iv_j$ is along the outer facet of $M$. We can build an extended $\vec{\mathbf{dr}}_g$ that goes out through the gate $v_iv_j$ and then back into $\hat{M}-M$. Back to the first graph. If the gate $v_2v_3$ is the entrance, then a possible exit can be any of $v_3v_4$, $v_5v_6$, $v_6v_0$ and $v_0v_1$, where $v_1v_2$ and $v_4v_5$ are impossible.     
   \end{example}

\subsection{Constructing RGB-tilings or G-tilings by diamond routes} 
\label{sec:mudland}

By our method, to prove the Four Color Theorem we start with the false assumption $EP\in e\mathcal{MPGN}4 \neq \emptyset$, and then use the follow-up property $EP-\{e\}$ is 4-colorable rather than use $EP-\{v\}$ is 4-colorable with particularly $\deg(v)=5)$. A 4-colorable $EP-\{e\}$ means the existence of RGB-tilings on $EP-\{e\}$, and the existence of G-tilings on $EP$ each of which is almost a G-tiling$^\ast$ but a green odd-cycle passing through $e$.      

The truth of the Four Color Theorem for all planar graphs guarantees that all MPG's have their own RGB-tilings. However, without using the Four Color Theorem, it is independent problem that all MPG's have their own RGB-tilings.

Besides the existence property of RGB-tilings or G-tilings, constructing methods are also interesting. We try to build a green tiling on an MPG or an an $(n_1,n_2,\ldots,n_k)$-semi-MPG from initially no green edges at all. The author have achieved several results on this constructing method to build a green tiling on $M$ by using $OT(e, u)_\text{O}$ . The results will be collected in another paper in the near future.

\section{Generalized canal lines and Kempe chains}  \label{sec:Gcanallines}

We are now introducing the second topic of this article--\emph{generalized canal lines}. The study is pretty long and we just use a pentagon in $EP$ to demonstrate the idea in this section.

The reader shall see the previous Sections~\ref{RGB2-sec:tanglingProperty} and~\ref{RGB2-sec:ediamond}  to review normal \emph{canal lines}. Basically, given an MPG or $(n_1,n_2,\ldots,n_k)$-semi-MPG $M$, a \emph{grand} R-tiling and red grand canal system are same idea of two representations (See Lemma~\ref{RGB1-thm:grandRtilingGrandLineSys}). First of all, we need an R-tiling $T_r:E(M)\rightarrow \{\text{red, black}\}$. The following is a brief review of a \emph{grand} R-tiling and a \emph{grand} R-canal system. 
  \begin{itemize}
\item A grand R-tiling: It is \emph{grand} one if the vertex set $V(M)$ can be partitioned into two disjoint parts $V_1$ and $V_2$ such that 
the subgraph $G_{bl}$ of $M$ induced by all black edges is a bipartite graph on bipartite-sets $V_1$ and $V_2$, and also there is no red edge between $V_1$ and $V_2$. (Most of time, we will draw red edges in $V_1$ thicker than $V_2$.)   
\item A grand R-canal system: It is \emph{grand} one if we can arrange orientation for all R-canal lines such that the flow directions are parallel but opposite on the two sides of each red edge. 
  \end{itemize}   

When an RGB-tiling is provided, a \emph{normal} red canal line is also simultaneously a G-diamond route and a B-diamond route that \underline{follows the orientation} of R-canal system, i.e., follows the two sides of red river banks. R/G/B are actually symmetric and switchable under some circumstances. In the previous sections we used G-diamond routes and here we introduce R-canal lines, because a green light in traffic means Go and free to cross; a red light in traffic means STOP and no crossing. However, a green diamond route in a provided RGB-tiling is not necessary a red canal line. Between any two consecutive green edges along a green diamond route, it could be a red edge or a blue one.   

A \emph{generalized canal line or ring} mainly follows the orientation of R-/G-canal system but crosses some particular red/green edges occasionally. In the following, we will use several example to explain how to operate a generalized canal line or ring.  
  
Please, refer to some figures in Section~\ref{RGB1-sec:RGB4coloring} for examples and counterexamples. Briefly we use ``R-tiling$^\ast$'' to stand for the abbreviation of ``R-tiling without any red odd-cycle.'' Notice that a grand R-tiling is not necessary an R-tiling$^\ast$. The study of the rest paper highly depends on 
Sections~\ref{RGB1-sec:RGB4coloring}, \ref{RGB1-sec:Grandline}, \ref{RGB2-sec:tanglingProperty} and \ref{RGB2-sec:NecSufConds}.

   \begin{lemma}   
Let $M$ be an MPG or an $n$-/$(n_1,n_2,\ldots,n_k)$-semi-MPG with an R-tiling $T_r$.  
      \begin{enumerate}
\item[(a)] A red tiling $T_r:=\bigcup_i rC_i$ on $M$ and a red canal system $rCLS:=\bigcup_j rCL_j$ are different perspectives of looking the same thing.
\item[(b)] By linking nodes of triangles, a red canal line $rCL_j$ of $T_r$ is either {\rm (b1)} a close cycle, called \emph{canal ring}, or {\rm (b2)} a path starting from one outer facet and ending at another outer facet (maybe the same outer facet), while the pair of entrance and exit on the two end of this path are both black edges along the outer facets. 
\item[(c)] If $M$ is an MPG, then every red canal line $rCL_j$ is ring. If $M$ is an $n$-semi-MPG, then the connection of entrances and exits
of this red canal system $rCLS$ creates a non-crossing match among all black edges along the unique outer facet.   
      \end{enumerate}      
   \end{lemma}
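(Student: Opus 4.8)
The plan is to argue entirely inside the red dual graph $DG(M;T_r)$, the evident red analogue of $DG(M;T_g)$, whose nodes are the triangle facets of $M$ together with the pseudo-nodes placed just outside the outer edges, and whose links cross the edges of $M$ and inherit the red/black colour of $T_r$. The single structural fact behind all three parts is that an R-tiling gives each triangle exactly one red edge and hence exactly two black ones: in the subgraph of $DG(M;T_r)$ formed by the \emph{black} links, every interior triangle node therefore has degree exactly $2$, a pseudo-node outside a black outer edge is a degree-one terminal, and a pseudo-node outside a red outer edge carries no black link at all. I would record this degree count first, since it is exactly what distinguishes straight-through passage from branching or stopping.

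For (a) I would unpack the two descriptions and exhibit the bijection. The colouring $T_r$ fixes its red edges, hence its black edges as the complement, hence --- the black links being $2$-regular at every interior node --- a canonical decomposition of the black links into connected components, each a path or a cycle; these are precisely the canal lines $rCL_j$. Conversely, recording which edges the canals cross returns the black edges, and therefore $T_r$ together with its red diamonds $rC_i$. So $\bigcup_i rC_i$ and $\bigcup_j rCL_j$ are two accounts of one partition of the triangles of $M$: each triangle sits in a unique red diamond (through its red edge) and on a unique canal line (through its two black edges), with the red edges serving as the banks alongside which the canal runs.

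For (b) I would follow a canal link by link. On entering an interior node through one of its two black links the canal may neither cross the red bank nor halt (the node is not a terminal), so it must leave through the other black link; passage is forced and never branches. A canal line is thus a connected component of a graph of maximum degree $2$, so it is either a simple cycle through interior nodes only --- a canal ring, case (b1) --- or a simple path whose only possible endpoints are the degree-one pseudo-nodes, that is, black outer edges. In the path case both ends lie beyond black outer edges, which gives (b2) with entrance and exit both black and both along outer facets, possibly the same one.

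For (c) I would split on the boundary. If $M$ is an MPG then, read on the sphere, every facet is a triangle, so $DG(M;T_r)$ is a genuine cubic graph with no pseudo-nodes; its black links form a $2$-regular subgraph, a disjoint union of cycles, so by (b) every canal line is a ring. If $M$ is an $n$-semi-MPG, each black outer edge sends exactly one black link inward, so it is an endpoint of exactly one canal path, and following that path through the $2$-regular interior returns it to a different black outer edge; the entrances and exits therefore pair the black outer edges into a matching. That the matching is non-crossing is where planarity enters: the canal paths are pairwise disjoint simple arcs in the disc cut off by the outer facet (disjoint because each interior node lies on at most one canal), so two chords joining their endpoints on the boundary cannot interleave without forcing the arcs to cross. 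I expect this planarity step --- together with the bookkeeping at the pseudo-nodes and the mild extra care in the multi-hole $(n_1,\dots,n_k)$ case, where a path may run between two distinct outer facets --- to be the only non-formal ingredient; given the ``one red edge per triangle'' property, the degree count and part (a) are essentially definitional.
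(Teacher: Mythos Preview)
Your proposal is correct and follows precisely the dual-graph framework the paper itself sets up in Section~\ref{sec:DR} for $DG(M;T_g)$: canal lines are the black-link components, and the ``one red edge per triangle'' property of an R-tiling makes the black subgraph $2$-regular at interior nodes and degree~$1$ at pseudo-nodes over black outer edges, from which (a)--(c) follow by the standard path-or-cycle dichotomy plus planarity for the non-crossing claim. The paper does not actually prove this lemma in Part~III --- it is stated as a review of material from Part~I (cf.\ the reference to Lemma~\ref{RGB1-thm:grandRtilingGrandLineSys} and Sections~\ref{RGB1-sec:RGB4coloring}, \ref{RGB1-sec:Grandline}) --- so your argument is not so much an alternative as the natural proof implicit in the paper's own definitions, and it is the right one.
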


\subsection{The rotation of the dual Kempe chains w.r.t.\ $(EP, v)$ with $\deg(v)=5$} \label{sec:rotationDualKempe}

To prove Four Color Theorem, approaching by contrapositive method is nearly inevitable. We shall assume $e\mathcal{MPGN}4$ nonempty and deal with a pseudo extremum graph, say $EP$, which is minimum in cardinality among all non-4-colorable MPG's.  The classical Kempe's proof consider a 5-semi-MPG defined by $P:= EP-\{v\}$, where $v$ is any vertex in $EP$ with $\deg(v)=5$. Our approach simulate Kempe's classical proof: Given the same situation as the setting of Kempe's proof, we consider a 4-semi-MPG $Q_i:= EP-\{v v_i\}$ for the five neighbors  $v_1, v_2,\ldots, v_5$ of $v$. By Theorem~\ref{RGB1-thm:eMPG4}, any non-trivial subgraph of $EP$ is 4-colorable; so $P$ and $Q_i$ are all 4-colorable. 

The classical Kempe's proof applies vertex-color-switching, and our renewal method uses edge-color-switching. Please, refer to Sections~\ref{RGB2-sec:RGBKempeChainDeg5}, \ref{RGB2-sec:tanglingProperty}, \ref{RGB2-sec:ediamond} and~\ref{RGB2-sec:NecSufConds} for details. 

Let us introduce the first main idea involved generalized canal rings: \emph{the rotation of the dual Kempe chains} w.r.t.\ $(EP, v)$. The idea is demonstrated by Figure~\ref{fig:KempeCRotation} briefly. Given any extremum planar graph $EP\in e\mathcal{MPGN}4$, there are at least 12 vertices of degree 5 (see Theorem~\ref{RGB1-thm:V5}, and also in \cite{Kempe1879}). Let $v\in V(EP)$ with $\deg(v)=5$ and $v_1,\ldots,v_5$ be its five neighbors. Let us denote $\Omega:=v_1$-$v_2$-$\ldots$-$v_5$-$v_1$, and $\Sigma$ ($\Sigma'$) to be the sub-area or sub-graph of $EP$ inside (outside) of $\Omega$. 

We will prove that all six graphs in Figure~\ref{fig:KempeCRotation} with their own RGB-tiling  on $EP-\{vv_i\}$ for $i=1,\ldots, 5$ in Figure~\ref{fig:KempeCRotation} are different  statuses of a same congruence class. Please, refer to Subsection~\ref{RGB2-sec:SynEquivCong} for the definition of the three different equivalence relations: synonym ($\overset{\text{\tiny syn}}{=}$, $\langle\cdot\rangle$), equivalence ($\equiv$, $[\cdot]$ under $\Omega$) and congruence ($\cong$ under $\Omega$). 
   \begin{figure}[h]
   \begin{center}
\includegraphics[scale=0.9]{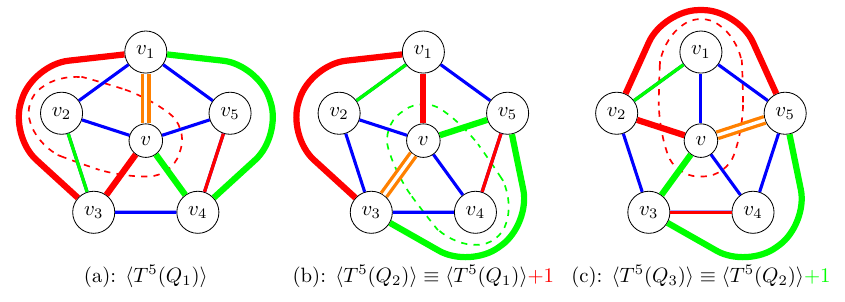}
\includegraphics[scale=0.9]{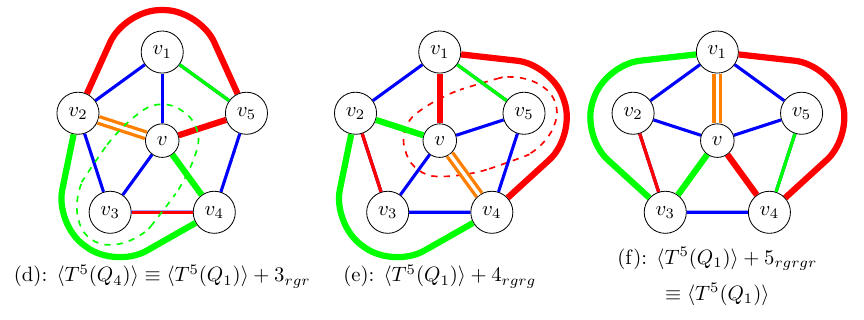}
   \end{center}
   \caption{The rotation of the dual Kempe chains w.r.t.\ $(EP; v)$} \label{fig:KempeCRotation} 
   \end{figure}

Here we focus on the this \emph{pentagon sub-area} of $EP$ (every $EP$ without exception), it is nature to name this picture of $(EP; v)$ with $\deg(v)=5$ by $T^5$ particularly. Let $\Omega:=v_1$-$v_2$-$\ldots$-$v_5$-$v_1$ and $\Sigma$ ($\Sigma'$) is the subgraph of $EP$ inside (outside) $\Omega$. Due to the shape of pentagon sub-area, we also denote $Ptg:=\Sigma$ or simply use ``$5$'' as a superscript for short, where $\Sigma$ is a general notation for all kinds of sub-area. We start with an RGB-tiling (as same as a 4-coloring function) on $Q_1:=EP-\{vv_1\}$; this RGB-tiling, denoted by $T^5(Q_1)$, is guaranteed by Theorem~\ref{RGB1-thm:eMPG4}: any non-trivial subgraph of $EP$ is 4-colorable. Moreover, we use $\langle T^5(Q_1)\rangle$ to denote the class of synonyms of $T^5(Q_1)$ which consist of six RGB-tilings on $Q_1:=EP-\{vv_1\}$ by switching (permuting) edge-colors red, green and blue all over whole $Q_1$. We will use the equivalence class $[T^5(Q_1)]$ for this kind of process later as a comparison.

\begin{figure}[h]      
   \begin{center}
   \includegraphics[scale=1]{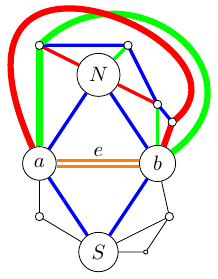}\qquad
   \includegraphics[scale=1]{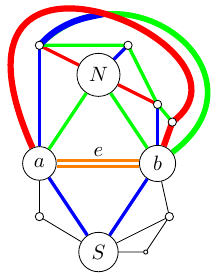}
   \end{center}
   \caption{Type A and Type B RGB-tilings for $EP-\{e\}$}  \label{fig:AtypeBtype00InSecondPaper} 
   \end{figure}
Two more things need to mention: (1) Most of time we use Type A $e$-diamond for the rest of this study; (2) $T^5(Q_1)$ is just one of many different RGB-tiling on $Q_1$ and and $\langle T^5(Q_1)\rangle$ is just one of many different  classes of synonyms. For the reader's convenience, we re-draw Type A and Type B $e$-diamonds in Figure~\ref{fig:AtypeBtype00InSecondPaper}.  

Before we start our process, let us look at (a) to (f) in Figure~\ref{fig:KempeCRotation} individually. So far these six Type A RGB-tilings $T(Q_i)$ are independent and their existence dues to Theorem~\ref{RGB2-thm:EPediamond} due to a fixed $e$-diamond. 
   \begin{remark} \label{re:EquVSSynonym}
Because the pentagon $\Sigma$ is very simple, the equivalence class $[T(Q_i)]$ is unique and it has the dual Kempe chains $(K_r, K_g)$ w.r.t.\ $(EP; vv_i)$ as the skeleton in $\Sigma'$; however $\langle T(Q_i)\rangle$ might have many different classes of six synonyms for a fix $i$. According to the pictures only, we see that (a) and (f) are equivalent, even though we just see red and green edge-colors switched. But we can not guarantee that the two underline RGB-tilings of (a) and (f) are synonyms, because the same \emph{skeleton} in $\Sigma'$ shared by both (a) and (f) might has different pair of paths. Two synonyms must share same paths of skeleton in $\Sigma'$. In this pentagon sub-area or $\Omega$, the skeleton in $\Sigma'$ can only be the dual Kempe chains $(K_r, K_g)$.
   \end{remark}

Starting with (a):$\langle T^5(Q_1)\rangle$ in Figure~\ref{fig:KempeCRotation} where we draw a \emph{red generalized canal ring}, denoted by $rGCL_1$, shown as a red dashed line. (Both red canal ring and canal line are always denoted by $rCL_i$ and never by $rCR_i$.) It is generalized because it crosses the red edge $vv_3$ and also it crosses the yellow double-line\footnote{This double-line is actually orange color because yellow color in not easy to see for publications.} $vv_1$. Now let us perform edge-color-switching (ECS for short\footnote{We use acronyms VCS and ECS to stand for ``vertex-color-switching'' and ``edge-color-switching'' respectively.}) on  $rGCL(v_1v_2)$ and then we obtain (b) in Figure~\ref{fig:KempeCRotation}. Because the new RGB-tiling $\langle T^5(Q_2)\rangle$ is obtain by perform ECS on \textcolor{red}{red canal ring}, we also write it as   
$\langle T^5(Q_1)\rangle\textcolor{red}{+1}$. 

So, how to perform edge-color-switching on (or along) a red generalized canal ring? It is very simple:
   \begin{itemize}
\item Switch edge colors of green and blue, just like what we do for a normal red canal line;  
\item Switch edge colors of red and yellow double-line, and this switching rule is what we perform for the ``generalized'' segment.
\item To perform edge-color-switching on (or along) a green/blue generalized canal ring, we just apply the two items above, under symmetry of three colors. 
   \end{itemize} 

In total, we perform ECS five times in Figure~\ref{fig:KempeCRotation} and alternately using red/green generalized canal rings. We use different ways to remark these six graphs. For instance, (e) is actually $\langle T^5(Q_5)\rangle$; however when we follow the previous four processes,  $\langle T^5(Q_1)\rangle + 4_{rgrg}$ is a good way to represent this equivalence class.  

   \begin{remark}
After explore these five processes and the remarks labeled for (a) to (f), we find that the class $[T^5(Q_i)]$ is much better than the class $\langle T^5(Q_i)\rangle$. We will use former one in the rest of the paper.
   \end{remark}

   \begin{remark}
If we start with (f) and perform ECS five more times, then we can get a new $[T^5(Q_1)]$, even though this new $[T^5(Q_1)]$ might have different underline RGB-tiling compared with the original (a): $T^5(Q_1)$, i.e., we have two RGB-tilings of a same equivalence class but not necessary the same.    
   \end{remark}
   
By definition of congruence relation defined in Subsection~\ref{RGB2-sec:SynEquivCong}, we have 
  $$
[T^5(Q_1)]_\Omega \cong [T^5(Q_2)]_\Omega \cong \cdots \cong [T^5(Q_5)]_\Omega.
  $$  
where the subscript $\Omega$ means the sub-area that $[\cdot]$ builds up. Therefore, we have the following theorem.  

  \begin{theorem}
Let $EP\in e\mathcal{MPGN}4$, $v\in V(EP)$ with $\deg(v)=5$ where the five neighbors of $v$ form $\Omega:=v_1$-$v_2$-$\ldots$-$v_5$-$v_1$. Under the equivalence relation $[\cdot]_\Omega$, all RGB-tilings on $EP$ are in a same congruence class.
  \end{theorem}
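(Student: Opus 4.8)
The plan is to prove the theorem by establishing the chain of congruences
$$
[T^5(Q_1)]_\Omega \cong [T^5(Q_2)]_\Omega \cong \cdots \cong [T^5(Q_5)]_\Omega
$$
rigorously, and then arguing that \emph{every} RGB-tiling on $EP$ falls into this single congruence class. First I would recall that an RGB-tiling on $EP$ is impossible in the literal sense (since $EP$ is non-4-colorable), so the object under discussion is really an RGB-tiling on some $Q_i = EP - \{vv_i\}$, whose existence is guaranteed by Theorem~\ref{RGB1-thm:eMPG4}. The statement ``all RGB-tilings on $EP$ are in a same congruence class'' must therefore be read modulo $\Omega$: fixing the pentagon $\Sigma$ and the boundary $\Omega$, any tiling of $EP-\{vv_i\}$ restricts to a \emph{skeleton} in $\Sigma'$, and two such tilings are congruent when one can be transported to the other by the generalized-canal-ring operations described above.

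The key steps I would carry out in order are as follows. Step one: verify that each single edge-color-switch along a red (respectively green) generalized canal ring $rGCL_j$ carries a valid RGB-tiling of $Q_i$ to a valid RGB-tiling of $Q_{i+1}$. This requires checking that the two switching rules — swapping green/blue along the ordinary canal segments, and swapping red/yellow-double-line along the generalized segment crossing $vv_3$ and $vv_1$ — together preserve the RGB-tiling conditions at every triangle of $\Sigma'$ and along $\Omega$. Step two: by Remark~\ref{re:EquVSSynonym} the pentagon $\Sigma$ is simple enough that each equivalence class $[T^5(Q_i)]_\Omega$ is \emph{unique} and is determined entirely by its dual Kempe chains $(K_r,K_g)$ as skeleton in $\Sigma'$; I would use this to show that the operation is well-defined at the level of equivalence classes, so that it descends to the congruence relation. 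Step three: assemble these single steps into the displayed chain, invoking the definition of congruence from Subsection~\ref{RGB2-sec:SynEquivCong} to conclude $[T^5(Q_1)]_\Omega \cong \cdots \cong [T^5(Q_5)]_\Omega$. Step four, which completes the theorem: show that these five classes exhaust all congruence classes. Here I would argue that any RGB-tiling of $EP$ (i.e.\ of some $Q_i$) has a skeleton in $\Sigma'$ that is a pair of dual Kempe chains w.r.t.\ $(EP; vv_i)$, and that the ``rotation'' accessible via the five ECS processes reaches every admissible skeleton, so no class lies outside the chain.

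The main obstacle I expect is Step four — the surjectivity/exhaustiveness claim. Steps one through three are essentially a careful bookkeeping of a local operation and should follow from the constructions already set up, but proving that the five rotated classes capture \emph{all} RGB-tilings up to congruence is genuinely harder. The danger is that a generalized canal ring might fail to be ``in-and-out'' with respect to some configuration (the phenomenon flagged in the Remark after the first example of Section~\ref{sec:DR}, where the grand property can be destroyed), so I would need to exploit the fact that $EP$ is One Piece to guarantee every relevant ring crosses each red/green odd structure an even number of times. I would also lean on the simplicity of the pentagon: because $\deg(v)=5$ and $\Sigma$ contains no interior vertices in the picture $T^5$, the possible skeletons are tightly constrained, and a finite case analysis over the cyclic arrangement of $(K_r,K_g)$ on the five neighbors should close the argument. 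The cleanest route is probably to show the five ECS moves generate a cyclic action on the set of congruence classes that acts transitively, so that the orbit is a single class and the theorem follows.
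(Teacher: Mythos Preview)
Your approach matches the paper's: establish the chain $[T^5(Q_1)]_\Omega \cong \cdots \cong [T^5(Q_5)]_\Omega$ by successive ECS on alternating red/green generalized canal rings (exactly the rotation in Figure~\ref{fig:KempeCRotation}), and then conclude. The paper gives no separate proof of the theorem beyond displaying this chain and writing ``Therefore, we have the following theorem.''

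You misplace the difficulty, though. Step~4 is not the obstacle you think; it is immediate from your own Step~2. Once you invoke Remark~\ref{re:EquVSSynonym} to say that for each $i$ the equivalence class $[T^5(Q_i)]_\Omega$ is \emph{unique}---because the pentagon forces $Co(\Omega)$ and the dual-Kempe-chain skeleton up to synonym---then every RGB-tiling on $Q_i$ already lies in that single class. Since any ``RGB-tiling on $EP$'' is by definition a tiling on some $Q_i=EP-\{vv_i\}$ (the five edges $vv_i$ being the only inner edges of $\Sigma$), it automatically sits in $[T^5(Q_i)]_\Omega$ and is therefore congruent to $[T^5(Q_1)]_\Omega$ by the chain. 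No orbit/transitivity argument, no case analysis on arrangements of $(K_r,K_g)$, and no appeal to the One-Piece in-and-out property is needed here; the paper relies on exactly this shortcut, which is why it offers no justification for exhaustiveness at all.
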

\noindent
Be careful! Once we focus on another topic for discussion as well as different $\Omega'$, then another equivalence relation $[\cdot]_{\Omega'}$ presents; so this theorem does not necessarily hold.  

   \begin{remark}
There is another reason that we had better use $[\cdot]$, rather than $\langle \cdot\rangle$. Again, let us focus on (a) which is both in Figures~\ref{fig:KempeCRotation} and~\ref{fig:KempeCRotation2}. This time we apply the other red generalized canal ring drawn as the red dashed line in Figures~\ref{fig:KempeCRotation2}. Yes, there are only two \emph{major}\footnote{Only \emph{major} red generalized canal rings (by our method) or \emph{major} red-connected components (by Kempe's method) can effect the skeleton in $\Sigma'$. Please, see Section~\ref{RGB2-sec:tanglingProperty} for details.} red generalized canal rings to reach a Type A $vv_2$-diamond shown as (b) and (b') in the two figures. Visually it is clear that (b) and (b') just a same graph with green/blue switched. However, (b) and (b') are not necessary same class of synonyms, but we are sure that (b) and (b') have same skeleton in $\Sigma'$; therefore the graphs of (b) and (b') are equivalent. One more thing shall be kept in mind: $K_r|_{v_1}^{v_3}$ represents a red-connected component connecting $v_1$ and $v_3$ that means $K_r|_{v_1}^{v_3}$ might contain a bunch of red paths from $v_1$ to $v_3$. Any red canal ring inside $K_r|_{v_1}^{v_3}$ is not \emph{major}. Also notice that    
  $$
[T^5(Q_1)\textcolor{red}{+1}]:=
[\langle T^5(Q_1)\rangle\textcolor{red}{+1}]
  $$ 
and the left-hand-side is our standard notation.
   \begin{figure}[h]
   \begin{center}
\includegraphics[scale=0.9]{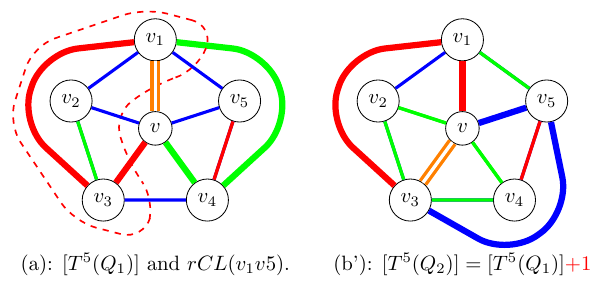}
   \end{center}
   \caption{The other major red generalized canal ring for (a)} \label{fig:KempeCRotation2} 
   \end{figure}
   \end{remark}

   \begin{definition} \label{def:GCLconjugate}
The two major red generalized canal rings shown in the the first graph in Figures~\ref{fig:KempeCRotation} and in~\ref{fig:KempeCRotation2} are usually denote by $rGCL(v_1v_2)$ and $rGCL(v_1v5)$, because they come out of $\Sigma$ from edges $v_1v_2$ and $v_1v5$ respectively. We say $rGCL(v_1v_2)$ and $rGCL(v_1v5)$ are \emph{conjugate} for the results, (b) and (b'), of ECS on each of them are equivalent. If there are three or more \emph{major} red generalized canal rings, the idea of \emph{conjugation} is more complicate. We will talk about it then.  
   \end{definition}

\subsection{More concepts about $Ptg$}
   
There are still two concepts to explore $Ptg$.  The first one is a 4-colorable function on $Ptg$ locally or this $Ptg$ is a sub-area of a 4-colorable MPG. The only possible representative is shown as the first graph in Figure~\ref{fig:PtgColorable}.  One of its features is the setting of the four red edges.     
   \begin{figure}[h]
   \begin{center}
\includegraphics[scale=0.76]{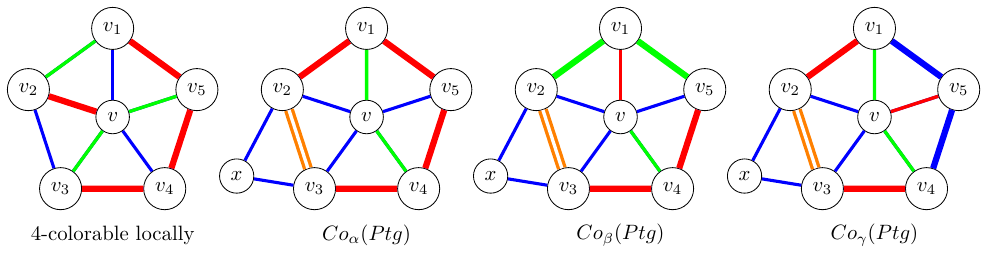}
   \end{center}
   \caption{4-colorable locally and a yellow double-line on $\Omega$} \label{fig:PtgColorable} 
   \end{figure}

The second concept comes from a half $e$-diamond of Type A in $Ptg$ shown as the rest three graphs in Figure~\ref{fig:PtgColorable}.

   \begin{lemma} \label{thm:Coalpha}
If $EP$ has $Co_\alpha(Ptg)$, then $\deg(v_2, v_3)\ge 6$.
   \end{lemma}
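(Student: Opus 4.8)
The plan is to argue by contradiction against $EP\in e\mathcal{MPGN}4$: I would assume that $Co_\alpha(Ptg)$ holds but the asserted bound fails, i.e.\ $\deg(v_2,v_3)\le 5$, and then manufacture a grand G-tiling$^{\ast}$ on all of $EP$, which is exactly a $4$-coloring of $EP$ and hence impossible. First I would make the hypothesis concrete. The presence of a half $e$-diamond of Type A in $Ptg$ pins down the edge-colors on the triangle $vv_2v_3$ together with the Type A pattern at the gate $e$; this leaves essentially no freedom inside $\Sigma$, so the local picture along the boundary edge $v_2v_3$ is completely determined. I would also invoke the Theorem just established, that every RGB-tiling on $EP$ lies in one congruence class under $[\cdot]_\Omega$, so the skeleton in $\Sigma'$ is the pair of dual Kempe chains $(K_r,K_g)$ and is forced once $Co_\alpha$ is fixed.

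Next, I would exploit the failure of the bound. Since the minimum degree of $EP$ is $5$, the inequality $\deg(v_2,v_3)\le 5$ forces a degree-$5$ vertex incident to the already-colored triangle $vv_2v_3$, so its link is again a pentagon. I would apply the pentagon machinery of Section~\ref{sec:rotationDualKempe} a second time, now centred at that vertex. The rotation of the dual Kempe chains forces its two major generalized canal rings to be conjugate in the sense of Definition~\ref{def:GCLconjugate}; intersecting these rings with $K_r|_{v_1}^{v_3}$ and with the colors already fixed by $Co_\alpha$ over-determines the skeleton in $\Sigma'$, leaving only finitely many admissible local patterns to analyse.

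Finally I would run the amendment of Section~\ref{sec:DR}. Choosing a green diamond route, or a generalized canal ring, that crosses the green odd-cycle through $e$ in-and-out, and performing ECS along it, removes that odd-cycle; because $EP$ is One Piece the in-and-out requirement of the Remark in Section~\ref{sec:DR} is automatic, so the amended tiling stays grand. The colors forced by the second pentagon analysis guarantee that the route closes up without spawning a new green odd-cycle, so the outcome is a grand G-tiling$^{\ast}$ on $EP$ — the desired contradiction, whence $\deg(v_2,v_3)\ge 6$.

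The hard part will be this last step. I must guarantee that the single ECS both breaks the odd-cycle through $e$ \emph{and} creates no new green odd-cycle while preserving grandness. The delicate interaction is between the local data fixed by $Co_\alpha$ and the global chains $(K_r,K_g)$ in $\Sigma'$, and I expect the bulk of the argument to be a finite case check on how the two conjugate canal rings at the degree-$5$ vertex meet $K_r|_{v_1}^{v_3}$, eliminating every configuration in which the chosen route could fail to be in-and-out or could regenerate an odd-cycle.
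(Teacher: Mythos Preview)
Your plan is badly over-engineered and never actually reaches a proof; the paper dispatches this lemma in two sentences by invoking a result you do not use at all.

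In $Co_\alpha(Ptg)$ the abandoned edge sits on the boundary segment $v_2v_3$, and the red Kempe chain $K_r|_{v_2}^{v_3}$ is forced to be the short path $v_2$-$v$-$v_3$ through the center of the pentagon. Now suppose $\deg(v_2)=5$ (the case $\deg(v_3)=5$ is symmetric). Then the tangling property for degree-$5$ vertices in $EP$ (Lemma~\ref{RGB2-thm:deg5tangling} from Part~II) demands that the dual Kempe chains $K_r$ and $K_g$ around $v_2$ actually intersect. But this $K_r|_{v_2}^{v_3}$ is far too short and rigid to tangle with any $K_g$ living in $\Sigma'$; the required intersection is geometrically impossible. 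That is the entire argument.

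Your proposal misses this: you never mention the tangling lemma, you write $K_r|_{v_1}^{v_3}$ where $K_r|_{v_2}^{v_3}$ is the relevant object, and you set up a second pentagon rotation followed by a diamond-route ECS whose correctness you yourself flag as ``the hard part'' requiring an unspecified finite case check. None of that machinery is needed, and as written your outline is not a proof but a programme with the decisive step left open. The lesson here is that $Co_\alpha$ is precisely the configuration in which one of the two dual Kempe chains degenerates inside $\Sigma$, and the tangling property converts that degeneration immediately into a degree obstruction.
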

   \begin{proof}
If $\deg(v_2)=5$ (or $deg(v_3)=5$) then this vertex has to follow the tangling property w.r.t.\ a degree 5 vertex in $EP$. However, $K_r|_{v_2}^{v_3}$ is too simple to interest with $K_g$. Thus, $\deg(v_2)\neq 5$.  Please, see Lemma~\ref{RGB2-thm:deg5tangling}.
   \end{proof}
   \begin{figure}[h]
   \begin{center}
\includegraphics[scale=0.82]{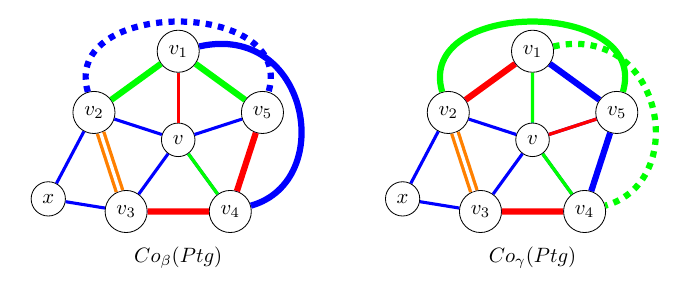}
   \end{center}
   \caption{Good $Co_\beta(Ptg)$/$Co_\gamma(Ptg)$, and 4-colorable ones} \label{fig:PtgColorable2} 
   \end{figure}

   \begin{corollary} \label{thm:Coalpha2}
If $EP$ has $Co_\beta(Ptg)$ with $\deg(v_2)=5$ or $\deg(v_3)= 5$, then $K_b|_{v_2}^{v_5}$ is impossible and $K_b|_{v_1}^{v_4}$ must exist. Please, refer to Figure~\ref{fig:PtgColorable2}.
    \end{corollary}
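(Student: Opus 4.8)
The plan is to run the tangling argument of Lemma~\ref{thm:Coalpha} in the productive direction: rather than deriving a contradiction from a degree-$5$ hypothesis, I would use that hypothesis to \emph{force} a blue Kempe chain, and then let planarity rule out its crossing partner. It suffices to treat $\deg(v_2)=5$, since the $Co_\beta(Ptg)$ configuration is symmetric under the reflection of $\Omega$ fixing $v_5$ and swapping $v_2\leftrightarrow v_3$, $v_1\leftrightarrow v_4$; under this reflection the chain $K_b|_{v_1}^{v_4}$ is preserved (as an unordered pair) and the hypothesis on $v_3$ maps to the hypothesis on $v_2$. Thus the existence of $K_b|_{v_1}^{v_4}$ will be the robust conclusion common to both hypotheses, and the impossibility of $K_b|_{v_2}^{v_5}$ will follow afterward purely from planarity in the original graph.

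First I would extract, from the half $e$-diamond of Type~A fixed by $Co_\beta$ (Figure~\ref{fig:PtgColorable2}), the colors of the edges on $\Omega$ and of the spokes $vv_i$ incident to $v_2$. Because $EP\in e\mathcal{MPGN}4$ is extremum, the degree-$5$ vertex $v_2$ must obey the tangling property of Lemma~\ref{RGB2-thm:deg5tangling}; applied here this says a Kempe swap at $v_2$ must be blocked, i.e.\ the two neighbours the swap would otherwise separate must already be joined by a Kempe chain. Reading off the $Co_\beta$ colors, I would check that the color pair governing this swap is the one whose chain runs in blue through $\Sigma'$, and that the only boundary pair consistent with the fixed colors is $\{v_1,v_4\}$. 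This yields the existence of $K_b|_{v_1}^{v_4}$.

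For the impossibility of $K_b|_{v_2}^{v_5}$ I would argue from planarity alone. The five neighbours occur in cyclic order $v_1,v_2,v_3,v_4,v_5$ around $\Omega$, so the pairs $\{v_1,v_4\}$ and $\{v_2,v_5\}$ interleave; two chains realized in the outer region $\Sigma'$ must therefore cross. Since both are blue, a crossing would place all of $v_1,v_2,v_4,v_5$ into one blue component, and I would show that this merger contradicts the colors that $Co_\beta$ fixes on $\Omega$, which keep the relevant endpoints in distinct blue Kempe classes. Hence once $K_b|_{v_1}^{v_4}$ exists, $K_b|_{v_2}^{v_5}$ cannot, completing the corollary.

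I expect the main obstacle to be the identification step: Lemma~\ref{RGB2-thm:deg5tangling} only guarantees \emph{that} some Kempe chain blocks the swap, and pinning this down to exactly $K_b|_{v_1}^{v_4}$ demands a careful case-check of the $Co_\beta$ edge-colors against the half $e$-diamond, excluding every competing endpoint pairing and every competing color. A lesser but genuine subtlety is justifying the ``crossing forces a merge'' step rigorously inside $\Sigma'$ — in particular confirming that both blue chains really live in the same planar region and that no passage along the boundary $\Omega$ lets them avoid one another.
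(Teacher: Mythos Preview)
The paper gives no explicit proof here; the result is stated as an immediate corollary of Lemma~\ref{thm:Coalpha} with Figure~\ref{fig:PtgColorable2} doing the work. The intended argument runs in the \emph{opposite order} from yours. One first rules out $K_b|_{v_2}^{v_5}$: if that chain existed, an edge-color-switch along the corresponding blue canal would convert $Co_\beta(Ptg)$ into $Co_\alpha(Ptg)$ (this is what the ``4-colorable ones'' in Figure~\ref{fig:PtgColorable2} are indicating), and Lemma~\ref{thm:Coalpha} then forbids $\deg(v_2)=5$ or $\deg(v_3)=5$. With $K_b|_{v_2}^{v_5}$ excluded, the half Type~A $e$-diamond dichotomy forces the other blue chain $K_b|_{v_1}^{v_4}$ to exist.

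Your proposal has two genuine gaps. First, the tangling step: Lemma~\ref{RGB2-thm:deg5tangling} applied at the degree-$5$ vertex $v_2$ produces Kempe chains among the five neighbours of $v_2$, namely $v,\,v_1,\,v_3$ and two vertices lying in $\Sigma'$. The vertex $v_4$ is \emph{not} a neighbour of $v_2$, so the chain $K_b|_{v_1}^{v_4}$ cannot be read off directly from tangling at $v_2$; your sentence ``the only boundary pair consistent with the fixed colors is $\{v_1,v_4\}$'' conflates the pentagon around $v$ with the pentagon around $v_2$. Second, the planarity step does not yield a contradiction as written: if both $K_b|_{v_1}^{v_4}$ and $K_b|_{v_2}^{v_5}$ existed they would indeed cross and merge into a single blue component, but nothing in $Co_\beta$ forces $v_1,v_2,v_4,v_5$ into distinct blue components, so the merger is harmless. (The familiar tangling contradiction requires the two crossing chains to carry \emph{different} colours; same-colour chains simply coalesce.) The clean route is the paper's: kill $K_b|_{v_2}^{v_5}$ by reducing to $Co_\alpha$ and invoking Lemma~\ref{thm:Coalpha}, then let the Type~A dichotomy hand you $K_b|_{v_1}^{v_4}$.
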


   \begin{corollary} \label{thm:Coalpha3}
If $EP$ has $Co_\gamma(Ptg)$ with $\deg(v_2)=5$ or $\deg(v_3)= 5$, then $K_g|_{v_1}^{v_4}$ is impossible and $K_g|_{v_2}^{v_5}$ must exist. Please, refer to Figure~\ref{fig:PtgColorable2}.
   \end{corollary}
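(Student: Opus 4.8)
The plan is to prove Corollary~\ref{thm:Coalpha3} directly by the tangling argument, in exact parallel with the proof of Lemma~\ref{thm:Coalpha} and of the companion Corollary~\ref{thm:Coalpha2}, invoking the degree-5 tangling property of Lemma~\ref{RGB2-thm:deg5tangling}. I would resist trying to deduce it from Corollary~\ref{thm:Coalpha2} by a pure color swap: interchanging blue and green alone sends ``$K_b|_{v_2}^{v_5}$ impossible, $K_b|_{v_1}^{v_4}$ exists'' to ``$K_g|_{v_2}^{v_5}$ impossible, $K_g|_{v_1}^{v_4}$ exists,'' which is the reverse of what is wanted; repairing this needs the diagonal swap $\{v_1,v_4\}\leftrightarrow\{v_2,v_5\}$, and no pentagon symmetry realizes that swap while fixing the hypothesis ``$\deg(v_2)=5$ or $\deg(v_3)=5$.'' So the two corollaries are genuinely parallel rather than symmetric, and I would run the argument afresh for $Co_\gamma$.

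Assume first $\deg(v_2)=5$. Since $v_2$ is then a degree-5 vertex of $EP$, it must satisfy the tangling property of Lemma~\ref{RGB2-thm:deg5tangling}, i.e.\ the Kempe chains around $v_2$ must interlock. I would read off from the $Co_\gamma(Ptg)$ coloring in Figure~\ref{fig:PtgColorable2} the colors forced on the edges incident to $v_1,\dots,v_5$, which determine that the only two green diagonals the skeleton in $\Sigma'$ can carry are $K_g|_{v_1}^{v_4}$ and $K_g|_{v_2}^{v_5}$; by planarity these cross, so at most one of them occurs. Exactly as in Lemma~\ref{thm:Coalpha}, where $K_r|_{v_2}^{v_3}$ was ``too simple to interlock with $K_g$,'' I would verify that a green chain $K_g|_{v_1}^{v_4}$ bypasses $v_2$ on the wrong side and hence cannot furnish the interlock that the tangling property at $v_2$ demands. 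This rules out $K_g|_{v_1}^{v_4}$. With that diagonal excluded, the only remaining way to meet the tangling requirement is the other green diagonal, so $K_g|_{v_2}^{v_5}$ must exist.

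The hard part will be the ``too simple to interlock'' step: one must check, against the precise edge-colors of $Co_\gamma$ (and \emph{not} merely by analogy with the blue case of Corollary~\ref{thm:Coalpha2}), that $K_g|_{v_1}^{v_4}$ cannot tangle with the red skeleton $K_r$ at $v_2$, and that its exclusion leaves no option other than $K_g|_{v_2}^{v_5}$. A secondary subtlety is the case $\deg(v_3)=5$: because no pentagon symmetry fixes the hypothesis while moving the diagonals, I would re-derive the tangling at $v_3$ directly and confirm that the orientation built into the half $e$-diamond of $Co_\gamma$ still pins the surviving green chain to $v_2v_5$, so that the stated conclusion holds whichever of $v_2,v_3$ has degree~5.
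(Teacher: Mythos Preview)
Your approach is correct and matches the paper's: the paper supplies no separate proof for this corollary, relying entirely on the figure and on the tangling mechanism already spelled out in the proof of Lemma~\ref{thm:Coalpha} (via Lemma~\ref{RGB2-thm:deg5tangling}). Your plan to run that same tangling argument directly at the degree-$5$ vertex $v_2$ (respectively $v_3$), and your observation that a naive blue/green swap from Corollary~\ref{thm:Coalpha2} does not yield the statement, are both sound and in fact more explicit than what the paper writes down.
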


\section{Kempe chains around two adjacent vertices of degree 5 in $EP$} \label{sec:rotationDualKempe2}
Given a particular $EP\in e\mathcal{MPGN}4$ who has two adjacent vertices, say $a$ and $b$, of degree 5, us will perform ECS on generalized canal ring around $a$ and $b$ to obtain many Kempe chains in different statuses.

Let $T\!D:=(\{a,b\};\deg(a,b)=5)$. It is the \emph{topic for discussion}. Around $T\!D$ is the \emph{boarder} $\Phi$ as a cycle.  Here we have $\Phi:=v_1$-$v_2$-$c$-$v_4$-$v_5$-$d$-$v_1$.\footnote{Here we use $\Phi$ to distinguish from $\Omega$ in Subsection~\ref{sec:rotationDualKempe}.} The formation definition of $T\!D$ will be given in Subsection~\ref{sec:TD}. Because $\deg(a,b)=5$, we also use ``$55$''  to stand this particular $EP$. The two graphs in Figure~\ref{fig:KempeCRo2deg5basic} are the initial RGB-tilings of $EP$ with Type A $ab$-diamond under equivalence. The subscripts $\alpha$ and $\beta$ are just labels to distinguish them. Clearly, $[T_\alpha^{55}]\not\equiv [T_\beta^{55}]$. We will show that $[T_\alpha^{55}]\cong [T_\beta^{55}]$. Another obvious observation is that the equivalence class $[T_\alpha^{55}]$ of RGB-tilings is symmetric w.r.t.\ the vertical line and the horizontal one; so is the class $[T_\beta^{55}]$ with more imagination.
   \begin{figure}[h]
   \begin{center}
   \includegraphics[scale=0.9]{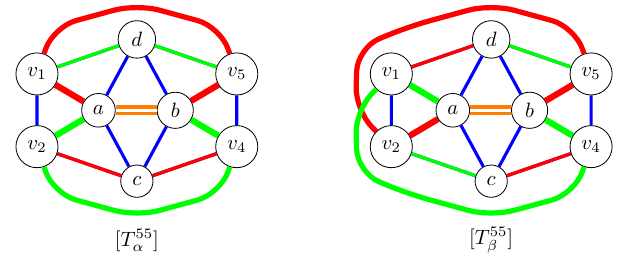}
   \end{center}
   \caption{The two initial RGB-tilings of $EP$ with $T\!D =55$} \label{fig:KempeCRo2deg5basic} 
   \end{figure}

For $[T_\alpha^{55}]$ there are two major red (green) generalized canal rings, namely $rGCL(dv_1)$ and $rGCL(v_1v_2)$ ($gGCL(cv_2)$ and $gGCL(v_1v_2)$). For $[T_\beta^{55}]$ there are two major red (green) generalized canal rings, namely $rGCL(v_1v_2)$ and $rGCL(cv_2)$ ($gGCL(v_1v_2)$ and $gGCL(dv_1)$). Of course, all of them are conjugate in pairs.

\subsection{Let us rock-n-roll} \label{sec:RnR}
Starting with the initial status $S_0:=[T_\alpha^{55}]$, let us perform 10 consecutive processes of ECS according to those red/green dashed lines drawn in Figure~\ref{fig:KempeCRo2deg5}.  The whole figure shows the rotation of many Kempe chains around vertex $a$ and $b$, or around $\Phi$.
   \begin{figure}[h]
   \begin{center}
   \includegraphics[scale=0.82]{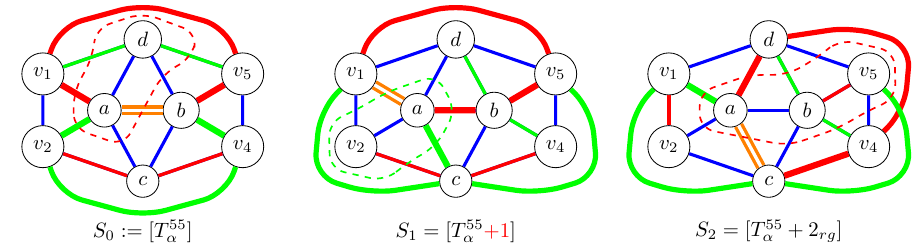}
   \includegraphics[scale=0.82]{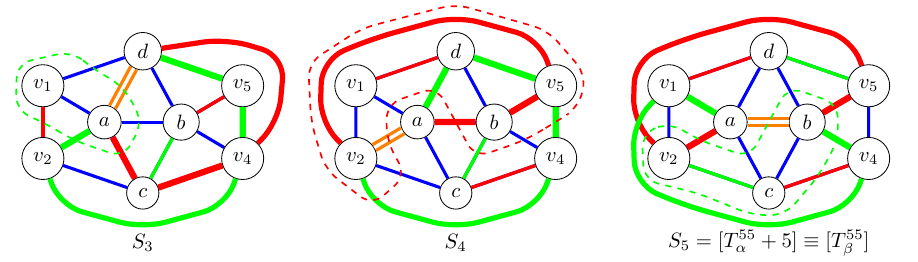}
   \includegraphics[scale=0.82]{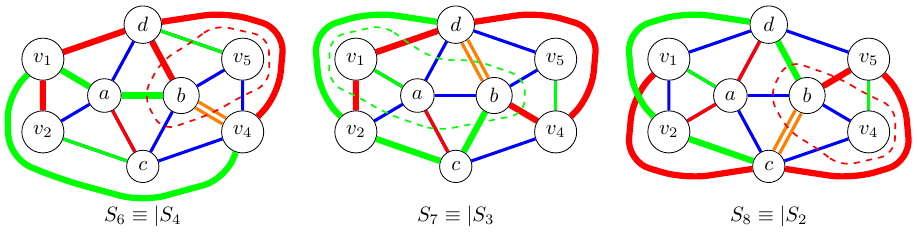}
   \includegraphics[scale=0.82]{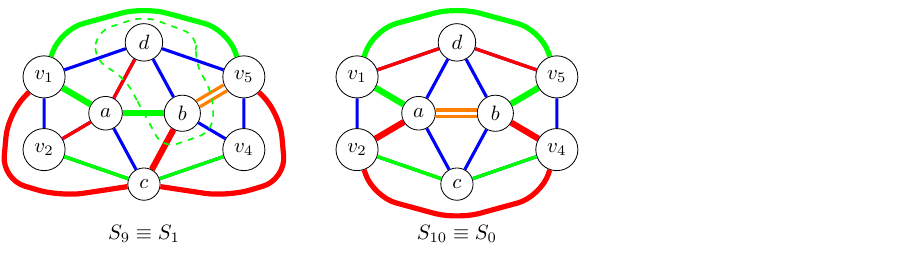}
   \end{center}
   \caption{Rock-n-roll around $(\{a,b\};\deg(a,b)=5)$} \label{fig:KempeCRo2deg5} 
   \end{figure}

   \begin{remark}
Some details in Figure~\ref{fig:KempeCRo2deg5} need to mention. We have two versions of $rGCL(dv_1)$, where we do show the one that turns around vertex $a$, and the other one that turns around vertex $b$. That is why we have both $K_g|_c^{v_1}$ and $K_g|_c^{v_5}$ in $S_1$. Also notice that even though the graph we draw for $S_1$ seems to have $\Gdeg(c)=3$, but it is more possible that $\Gdeg(c)=2$ for $K_g|_c^{v_1}$ and $K_g|_c^{v_5}$ sharing one green edge in $\Sigma'$. So far we still have $\deg(c)\ge 5$.
   \end{remark}

   \begin{remark}
The Kempe chain $K_r|_d^{v_4}$ in $S_2$ and $S_3$ can be replaced by $K_r|_d^{c}$, because we can only guarantee that red edge $cv_4$ is red connected with $d$. If it is really $K_r|_d^{c}$, then $\deg(c)\ge 6$.  The same thing happens in $S_3$ and $S_4$ for $K_g|_{v_2}^{v_4}$; it can be either replace by $K_g|_{v_2}^{v_5}$ or $K_g|_{v_2}^{d}$. If it is really $K_g|_{v_2}^{v_5}$, then $\deg(v_5)\ge 7$. If it is really $K_g|_{v_2}^{d}$, then $\deg(d)\ge 6$. There are more discussion by this similar idea, and we will talk about it then.
   \end{remark}

   \begin{remark}
The notation $|S_4$ in the third line of this figure means reflection of $S_4$ w.r.t.\ the vertical line. There is also notation $\underline{S_\ast}$ that means reflection of $S_\ast$ w.r.t.\ the horizontal line. 
   \end{remark}   

   \begin{remark}
For $S_1$ there are three major green generalized canal rings, namely $gGCL(v_1v_2)$ (the given green dashed line in this figure) $gGCL(v_4v_5)$ and $gGCL(dv_1)$. So, what is the idea of \emph{conjugation} now? The reader can check that ECS on $gGCL(v_1v_2)$ is conjugate with $gGCL(v_4v_5)\oplus gGCL(dv_1)$, where $\oplus$ means combination or connecting these two generalized canal line in a proper way; also $gGCL(v_4v_5)$ is conjugate with $ gGCL(v_1v_2)\oplus gGCL(dv_1)$. We could choose ECS on $gGCL(v_4v_5)$ as our second process to change $S_1$ and then obtain $S'_2$. Clearly, $S'_2\equiv |S_2$; so the rest after $S'_2$ are all reflections w.r.t.\ the horizontal line.  Even though $gGCL(dv_1)$ is conjugate with $gGCL(v_1v_2)\oplus gGCL(v_4v_5)$, it seems useless; because applying ECS on $gGCL(dv_1)$ does not make a single $e$-diamond of Type A but two of Type B. Please, see Figure~\ref{fig:S1andS8} and refer to the next remark.
   \end{remark} 

According to Figure~\ref{fig:KempeCRo2deg5}, we have 
  \begin{equation} \label{eq:T55S09}
[S_0=T_\alpha^{55}]_\Phi \cong [S_1]_\Phi \cong \cdots \cong [S_5=T_\beta^{55}]_\Phi \cong \cdots \cong [S_{10}]_\Phi\equiv [T_\alpha^{55}]_\Phi.
  \end{equation}

   \begin{remark}  \label{re:Sx0}
Since $S_8\equiv |S_2$ and $S_9\equiv S_1$, something happens in between $S_1$ and $S_2$, as well as in between $S_8$ and $S_9$, are similar.  In the last remark, we said that performing ECS on  $gGCL(dv_1)$ in $S_1$ seems useless. Let us try it, as well as performing ECS on  $rGCL(v_1v_2)$ in $S_8$. We obtain $S_x \cong S_y$ and they have two Kempe chains of only one color. Notice that the pair of yellow double-lines in $S_x$ are different from the ones in $S_y$. However, the congruence of them is decided by the edge-coloring along $\Phi$, denoted by $Co(\Phi)$, and the skeleton in $\Sigma'$. The difference inside $\Sigma$ between  $S_x$ and $S_y$ associates with $\Sigma$-adjustment that will be introduced later.      
Now we can explore this interesting $S_x$. 
   \begin{figure}[h]
   \begin{center}
   \includegraphics[scale=0.9]{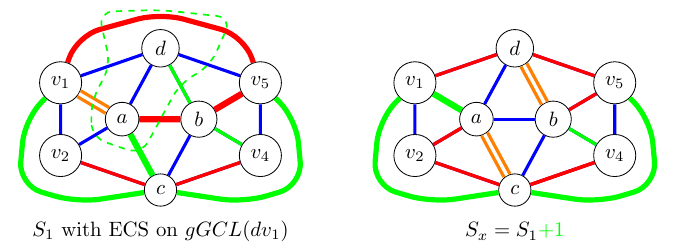}
   \includegraphics[scale=0.9]{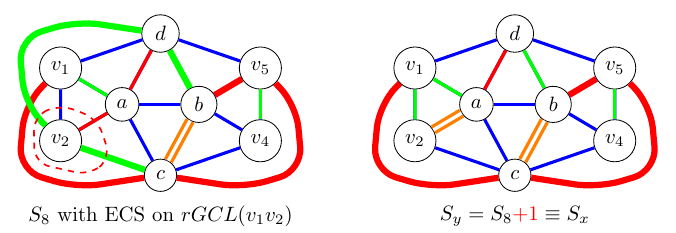}
   \end{center}
   \caption{About $S_1$ and $S_8$} \label{fig:S1andS8} 
   \end{figure}
   \end{remark}  

According to Figure~\ref{fig:S1andS8} , we have 
  \begin{equation} \label{eq:T55Sx}
[S_0=T_\alpha^{55}]_\Phi \cong [S_x]_\Phi \equiv  [S_y]_\Phi.
  \end{equation}

We use notation $|S_\ast$ and $\underline{S_\ast}$ to denote the reflection images of $S_\ast$ w.r.t.\ the vertical line and the horizontal one respectively. Also $|\underline{S_\ast}$
is reflected twice. Let $[S_\ast]_\text{sym}$ consists of the equivalence of the these four reflection images of $S_\ast$. Most of time $[S_\ast]_\text{sym}$ has four different elements, but both $[T_\alpha^{55}]_\text{sym}$ and $[T_\beta^{55}]_\text{sym}$ have only one element. Due to this fact of only one element and Equations~\ref{eq:T55S09} and~\ref{eq:T55Sx}, we derive the next lemma.

   \begin{lemma}   \label{thm:CongruentEachOther}
All elements in $\{[S_0], [S_1], \ldots, [S_9], [S_x]\}_\text{sym}$, where the subscript $\text{sym}$ means this set consists all symmetric images w.r.t.\ the vertical line and the horizontal one, are  congruent to each other.
   \end{lemma}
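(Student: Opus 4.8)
The plan is to bootstrap from the two congruence chains already secured in Equations~\ref{eq:T55S09} and~\ref{eq:T55Sx}, and then to propagate them across the four reflection images by exploiting that the two endpoints $[T_\alpha^{55}]$ and $[T_\beta^{55}]$ are themselves reflection-invariant. First I would discard the subscript $\mathrm{sym}$ and treat only the bare representatives. Equation~\ref{eq:T55S09} already gives $[S_0]_\Phi\cong[S_1]_\Phi\cong\cdots\cong[S_9]_\Phi$, and Equation~\ref{eq:T55Sx} appends $[S_x]_\Phi\cong[S_0]_\Phi$; since $\cong$ is an equivalence relation, transitivity immediately makes the eleven classes $[S_0],\dots,[S_9],[S_x]$ mutually congruent. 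This settles the unreflected part of the set.

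Next I would establish the one structural fact that drives everything: congruence under $\Phi$ is \emph{equivariant} under the vertical reflection $|(\cdot)$ and the horizontal reflection $\underline{(\cdot)}$. Reflection across the vertical (resp.\ horizontal) axis is an automorphism of the whole $55$-picture: it fixes the border $\Phi$ setwise and $\Sigma'$ setwise, it carries any skeleton of dual Kempe chains in $\Sigma'$ to a skeleton of the reflected RGB-tiling, and it induces on $\Phi$ exactly the reflected border-coloring. Because congruence under $\Phi$ is, by the definition in Subsection~\ref{RGB2-sec:SynEquivCong}, determined solely by the border-coloring $Co(\Phi)$ together with the skeleton in $\Sigma'$, it follows that each of $|(\cdot)$, $\underline{(\cdot)}$ and their composite $|\underline{(\cdot)}$ sends congruent classes to congruent classes; equivalently, the Klein four-group generated by the two reflections acts on equivalence classes by congruence-preserving bijections. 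I expect this equivariance to be the main point to nail down, since it is the step where one must check that reflecting a tiling really does reflect its skeleton in $\Sigma'$ and its border data consistently; the remainder is formal.

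Finally I would anchor the two parts together using the hypothesis that $[T_\alpha^{55}]_\mathrm{sym}$ and $[T_\beta^{55}]_\mathrm{sym}$ each contain a single element. This says precisely that $[S_0]=[T_\alpha^{55}]$ (and, symmetrically, $[S_5]=[T_\beta^{55}]$) is fixed by every reflection $\rho$ in the group. Hence, for any representative $[S_i]$ (or $[S_x]$), applying $\rho$ to the congruence $[S_i]_\Phi\cong[S_0]_\Phi$ from the first step and invoking equivariance yields
\[
\rho[S_i]_\Phi \;\cong\; \rho[S_0]_\Phi \;=\; [S_0]_\Phi \;\cong\; [S_i]_\Phi .
\]
Thus each of the four reflection images $\rho[S_i]$ is congruent to $[S_i]$, hence to every other representative. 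Ranging over the four choices of $\rho$ and over all indices shows that the entire set $\{[S_0],\dots,[S_9],[S_x]\}_\mathrm{sym}$ lies in a single congruence class, which is exactly the assertion.
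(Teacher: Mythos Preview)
Your overall strategy matches the paper's: it too derives the lemma from Equations~\ref{eq:T55S09} and~\ref{eq:T55Sx} together with the observation that $[T_\alpha^{55}]_{\mathrm{sym}}$ and $[T_\beta^{55}]_{\mathrm{sym}}$ are singletons. Your write-up is in fact more explicit than the one-line justification the paper gives.

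There is, however, one point in your equivariance step that is not correct as written. You assert that the vertical/horizontal reflection ``is an automorphism of the whole $55$-picture: it fixes the border $\Phi$ setwise and $\Sigma'$ setwise, it carries any skeleton of dual Kempe chains in $\Sigma'$ to a skeleton of the reflected RGB-tiling.'' But $\Sigma'$ is the rest of the fixed graph $EP$, and $EP$ has no reason whatsoever to be symmetric; a reflection is \emph{not} an automorphism of $\Sigma'$, and reflecting an actual RGB-tiling of $EP$ gives a tiling of the mirror graph, not of $EP$. So the statement ``$\rho$ sends congruent classes to congruent classes'' is not well-defined as a map on equivalence classes of tilings of $EP$: for a generic class $[T]$ there need not exist any tiling of $EP$ whose border/skeleton data equals $\rho(\text{data of }T)$.

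What actually makes the argument go through is slightly different and worth stating precisely. The reflections are symmetries of $\Sigma$ and of $\Phi$, and each ECS step is governed entirely by the current $Co(\Phi)$ together with the skeleton connectivity on the $\Phi$-vertices. Hence, starting from the \emph{actual} tiling $S_0$ on $EP$ (whose data is $\rho$-invariant) and performing, step by step, the $\rho$-image of the canal ring used at each stage of Figure~\ref{fig:KempeCRo2deg5}, one obtains genuine RGB-tilings on $EP$ whose data are $\rho D_1,\rho D_2,\ldots$; each such step is a legitimate ECS on $EP$, so these tilings are congruent to $S_0$. In other words, the anchor $\rho[S_0]=[S_0]$ is not just used at the end but is what makes the reflected chain exist on $EP$ at all. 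With this correction your final display and conclusion stand.
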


Here is one more amazing and important property. 

   \begin{theorem} \label{thm:allGraphsSimilarToEP}
Let $EP\in e\mathcal{MPGN}4$ with $T\!D:=(\{a,b\};\deg(a,b)=5)$ and it is drawn as the underlining graph shown in Figure~\ref{fig:KempeCRo2deg5basic}. Also we adopt the notation $\Phi$, $\Sigma$ and $\Sigma'$. Let us fix the subgraph $\Sigma'$ and consider all kinds of MPG's, denoted by $M_\ast$, such that $M_\ast$ has exactly two vertices inside $\Phi$. Among all these $M_\ast$, only $EP$ is non-4-colorable.
   \end{theorem}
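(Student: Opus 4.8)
The plan is to argue by contraposition: I would show that if any member $M_\ast$ of this family is non-4-colorable, then $M_\ast$ must in fact be $EP$. First I would record that every $M_\ast$ shares the fixed exterior $\Sigma'$, together with its bordering hexagon $\Phi=v_1$-$v_2$-$c$-$v_4$-$v_5$-$d$-$v_1$, with $EP$, and differs from it only in how the interior of $\Phi$ is triangulated by its two inner vertices. In particular $|V(M_\ast)|=|V(\Sigma')|+2=|V(EP)|$. Hence, were $M_\ast$ non-4-colorable, it would also attain the minimum cardinality and so lie in $e\mathcal{MPGN}4$; by the extremum structure recalled earlier (Theorem~\ref{RGB1-thm:V5} and the minimum-degree property of $e\mathcal{MPGN}4$) we would then have $\delta(M_\ast)\ge 5$. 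Since the two inner vertices $p,q$ of $M_\ast$ carry every incident edge inside $\Phi$, this forces $\deg(p),\deg(q)\ge 5$.

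Next I would pin the interior down by a short Euler count. A triangulated hexagon with two interior vertices has $V=8$, $F=9$ and hence $E=15$, so exactly $15-6=9$ of its edges lie strictly inside $\Phi$; since $p,q$ are internal, every edge incident to $p$ or $q$ is one of these nine. If $p\sim q$ the number of such edges is $\deg(p)+\deg(q)-1\ge 9$, while if $p\not\sim q$ it is $\deg(p)+\deg(q)\ge 10$. As there are only nine interior edges, the second case is impossible and the first forces $\deg(p)=\deg(q)=5$, the edge $pq$ present, and no purely boundary chords. This is exactly the double-degree-$5$ diamond configuration $T\!D$. Because each of $p,q$ then sees four \emph{consecutive} boundary vertices with the two shared apexes of $pq$ at the ends of that arc, those two apexes are antipodal on $\Phi$; so the interior is combinatorially unique up to the choice of which of the three main diagonals of $\Phi$ carries the edge $pq$.

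Third, I would separate the three diagonal placements. One of them is $EP$ itself (with $pq=ab$ along the $c$-$d$ diagonal), which is non-4-colorable by hypothesis. For each of the two remaining placements I would produce a 4-coloring by exhibiting an RGB-tiling: the fixed $\Sigma'$ determines its own \emph{skeleton}, the dual Kempe chains governed by the rotation analysis of this section, and I would show that a rotated diamond demands a boundary edge-coloring $Co(\Phi)$ of a different congruence type, one that this skeleton \emph{does} realize. Concretely I would run the placement through the rock-n-roll rotation (Equations~\ref{eq:T55S09} and~\ref{eq:T55Sx} together with Lemma~\ref{thm:CongruentEachOther}) to locate a status $[S_i]$ whose interior matches the rotated diamond and whose exterior skeleton is compatible, using Corollaries~\ref{thm:Coalpha2} and~\ref{thm:Coalpha3} to rule out the conflicting Kempe-chain patterns so that the tiling closes up. Since one placement is $EP$ and the other two are thereby 4-colorable, $EP$ is the unique non-4-colorable member.

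The hard part will be this last step: proving the two rotated diamonds are \emph{genuinely} 4-colorable rather than merely different from $EP$. The counting argument alone shows only that any non-4-colorable $M_\ast$ carries the double-$5$ interior; it does not by itself forbid a rotated copy from being extremum too. The real work is to read off from the fixed $\Sigma'$ precisely which boundary-coloring classes its dual Kempe chains can close, and to check that each rotated diamond's admissible boundary class meets that set while the $EP$ placement's does not. This is where the congruence bookkeeping of the rotation, the conjugation of the major generalized canal rings (Definition~\ref{def:GCLconjugate}), and the tangling constraints on $\Sigma'$ must be combined, and I expect the bulk of the proof, including any case checking, to live here.
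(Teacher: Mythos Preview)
Your overall architecture matches the paper's: the counting argument that any non-4-colorable $M_\ast$ must have both interior vertices of degree exactly $5$ and adjacent (hence only three candidate interiors up to the choice of main diagonal of $\Phi$), followed by a direct 4-coloring of the two non-$EP$ placements, is exactly what the paper does. Your Euler count is a clean way to phrase what the paper handles more informally by saying ``only $EP$, $M_1$ and $M_2$ can keep $\deg(a,b)=5$; the rest MPG's of $M_\ast$ must have $\deg(a)\le 4$ or $\deg(b)\le 4$,'' after which the paper invokes Corollary~\ref{RGB1-thm:V5more2} just as you invoke $\delta\ge 5$ for extremum graphs.

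The one place you are vaguer than necessary is your ``hard part.'' You propose to search the rock-n-roll rotation for a status $[S_i]$ whose interior \emph{matches} the rotated diamond. That framing is slightly off: the statuses $[S_i]$ are tilings of $EP-\{\ast\}$, not of $M_1$ or $M_2$, so none of them literally has the rotated interior. What the paper actually does is much more direct and requires no case search: it takes the single status $[S_x]$ (the one with two Type~B diamonds and the green Kempe chains $K_g|_c^{v_1}$, $K_g|_c^{v_5}$ in $\Sigma'$), restricts it to $\Sigma'$, and then fills the interior of $M_1$ by hand with a compatible edge-coloring. The verification reduces to the observation that the green cycle so produced is even, and that even if the possible blue $K_b|_c^d$ is present it also closes to an even cycle. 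The reflection $M_2$ follows by symmetry. So the ``bulk of the proof'' you anticipate is in fact a one-line parity check once you have singled out $S_x$; Corollaries~\ref{thm:Coalpha2}--\ref{thm:Coalpha3} and the conjugation machinery are not needed here.
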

   \begin{proof}
We sill name the two vertices inside $\Phi$ by $a$ and $b$. The first MPG that we consider is $M_1$ show as the first (underlining) graph in Figure~\ref{fig:M1andM2}. By the existence of RGB-tiling $S_x$ in $\Sigma'$ and the setting of  
   \begin{figure}[h]
   \begin{center}
   \includegraphics[scale=0.83]{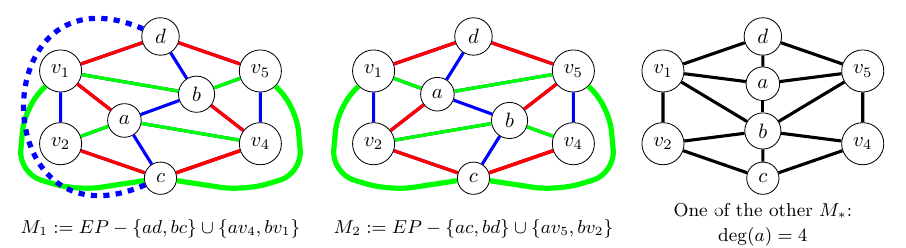}
   \end{center}
   \caption{$M_1$, $M_2$ and the rest of 4-colorable $M_\ast$} \label{fig:M1andM2} 
   \end{figure}
edge-coloring inside $\Phi$, we prove that $M_1$ is 4-colorable. Notice that the green cycle in $M_1$ is even, and even if the possible blue dashed $K_b|_c^d$ exists we see a blue even-cycle. The second graph in Figure~\ref{fig:M1andM2} MPG $M_2$ is the reflection of $M_1$ w.r.t.\ the vertical line. Clearly $M_2$ is 4-colorable. 

Among all MPG's $M_\ast$, including some graphs might have edges linking vertices in $\{c,d,v_1,v_2,v_4,v_5\}$ (for example, given edge $v_1v_4$), only $EP$, $M_1$ and $M_2$ can keep $\deg(a,b)= 5$; the rest MPG's of $M_\ast$ must have $\deg(a)\le 4$ or $\deg(b)\le 4$. Since $|M_\ast|=\omega$, which is the same order of all extremum planar graphs in $e\mathcal{MPGN}4$, the rest MPG's of $M_\ast$ must be 4-colorable by Corollary~\ref{RGB1-thm:V5more2}. The proof is complete.
   \end{proof}

   \begin{lemma}  \label{thm:2deg5}
Let $EP\in e\mathcal{MPGN}4$ with $T\!D:=(\{a,b\};\deg(a,b)=5)$ and it is drawn as the underlining graph shown in Figure~\ref{fig:KempeCRo2deg5basic}. The ten graphs, from $S_0$ to $S_9$ in Figure~\ref{fig:KempeCRo2deg5}, as well as $S_x$ and $S_y$ in Figure~\ref{fig:S1andS8}, are congruent. So we can only deal with one of them in discussion of 4-colorable or not; because each of them is a necessary condition for $EP$ being non-4-colorable. 
   \end{lemma}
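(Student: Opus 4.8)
The plan is to assemble this lemma from the congruence data already accumulated in Equations~\ref{eq:T55S09} and~\ref{eq:T55Sx}, together with Lemma~\ref{thm:CongruentEachOther}, rather than to re-run any edge-color-switching. By this point the genuine work has been done in building the rotation; the only new ingredient is folding $S_y$ into the class, and this is purely a matter of invoking the correct relation between equivalence and congruence.

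First I would record the one structural fact underlying the whole rotation: performing ECS along a (generalized) red or green canal ring on an RGB-tiling of $EP-\{ab\}$ carrying a Type A $ab$-diamond produces another such RGB-tiling, and the two lie in the same congruence class $[\cdot]_\Phi$, because ECS along a canal ring alters neither the edge-coloring $Co(\Phi)$ along the border $\Phi$ nor the skeleton (the dual Kempe chains) in $\Sigma'$. The ten consecutive processes displayed in Figure~\ref{fig:KempeCRo2deg5} then yield the chain of Equation~\ref{eq:T55S09}, and by transitivity of $\cong$ the statuses $S_0,S_1,\dots,S_9$ are mutually congruent. This is exactly Lemma~\ref{thm:CongruentEachOther}, which already packages $S_x$ into the class as well via the branch $[S_0]_\Phi\cong[S_x]_\Phi$ of Equation~\ref{eq:T55Sx}.

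It then remains to attach $S_y$. Equation~\ref{eq:T55Sx} gives $[S_x]_\Phi\equiv[S_y]_\Phi$; since equivalence is finer than congruence — two equivalent tilings share the same skeleton in $\Sigma'$, hence a fortiori the same status — we obtain $[S_x]_\Phi\cong[S_y]_\Phi$, and transitivity closes the loop so that $S_0,\dots,S_9,S_x,S_y$ all belong to a single congruence class. The step I expect to need the most care is precisely this implication $\equiv\,\Rightarrow\,\cong$ for $S_x$ and $S_y$: although they visibly differ inside $\Sigma$ (their two yellow double-lines are placed differently, a discrepancy that is the province of the forthcoming $\Sigma$-adjustment), I must make sure that difference is invisible to $[\cdot]_\Phi$, i.e.\ that congruence is decided solely by $Co(\Phi)$ and the skeleton in $\Sigma'$ as recorded in Remark~\ref{re:Sx0}.

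Finally, for the operational conclusion I would argue as follows. Because $EP$ is extremum, Theorem~\ref{RGB1-thm:eMPG4} makes $EP-\{ab\}$ 4-colorable, so it admits an RGB-tiling with a Type A $ab$-diamond, and any such tiling is congruent to one — and therefore to every — member of the list. Since congruence fixes exactly the data ($Co(\Phi)$ and the dual Kempe chains in $\Sigma'$) that governs whether the edge $ab$ can be reinstated to 4-color all of $EP$, each $S_i$ reflects the same obstruction and is a necessary feature of the non-4-colorable $EP$. Consequently it is legitimate to test 4-colorability on whichever single representative is most convenient, which is the content of the last sentence of the statement.
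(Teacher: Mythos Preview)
Your overall strategy is correct and matches the paper's treatment: the lemma is a summary of Equations~\ref{eq:T55S09} and~\ref{eq:T55Sx} together with Lemma~\ref{thm:CongruentEachOther}, with $S_y$ folded in via the implication $\equiv\Rightarrow\cong$. The paper in fact gives no separate proof for this lemma, precisely because the content has already been established by the preceding rotation.

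There is, however, a conceptual slip in your first paragraph that you should repair. You write that ECS along a generalized canal ring ``alters neither the edge-coloring $Co(\Phi)$ along the border $\Phi$ nor the skeleton (the dual Kempe chains) in $\Sigma'$.'' This is false: the entire point of the rotation in Figure~\ref{fig:KempeCRo2deg5} is that each ECS step \emph{does} change $Co(\Phi)$ and the skeleton---every $S_i$ carries a different border coloring and a different pair of dual Kempe chains. What you have described is the \emph{equivalence} relation, not congruence. Congruence $\cong$ is the coarser relation \emph{generated} by ECS moves (see Subsection~\ref{RGB2-sec:SynEquivCong}): two tilings are congruent because one can pass from one to the other by a chain of ECS operations, not because ECS preserves some invariant. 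Your conclusion that $S_0,\dots,S_9$ are mutually congruent is correct, but for the definitional reason just stated, not the reason you gave. The confusion does not break the present argument, yet it is worth cleaning up---particularly since the $S_y$ step depends on keeping $\equiv$ and $\cong$ straight (there you correctly use that sharing $Co(\Phi)$ and skeleton gives equivalence, which then implies congruence).
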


Let us still fix the subgraph $\Sigma'$ and consider three vertices $\alpha$, $\beta$ and $\gamma$ inside $\Phi$. We focus on two particular MPS's: $M^+_a$ and $M^+_b$ in Figure~\ref{fig:MaandMb}  where the superscript ``+'' means $|M^+_a| = |M^+_b| = \omega +1$. We dare to ask a question: Are $M^+_a$ and $M^+_b$ 4-colorable? The answer is yes, if non-4-colorable $(EP;55)$ do exist. We will prove this interesting problem later. 
   \begin{figure}[h]
   \begin{center}
   \includegraphics[scale=0.9]{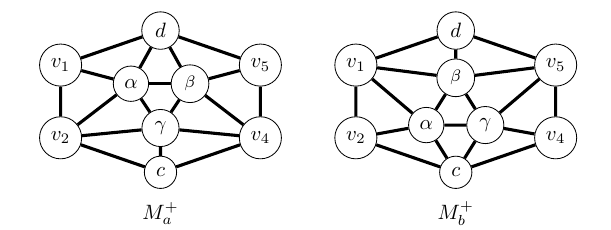}
   \end{center}
   \caption{$M^+_a$ and $M^+_b$} \label{fig:MaandMb} 
   \end{figure}

\subsection{4-colorable MPG's with $(\{a,b\};\deg(a,b)=5)$} \label{sec:4colorableDeg5Adj}

Let us think reversely. We focus on an 4-colorable MPG, say $M$, of any order with $T\!D:=(\{a,b\};\deg(a,b)=5)$, i.e., the underlining graph of $M$ is as same $\Sigma$ as the ones in Figure~\ref{fig:KempeCRo2deg5basic}. Of course, $M$ is a different $\Sigma'$ compared with $EP$. For $M$, we name $\Omega:=v_1$-$v_2$-$c$-$v_4$-$v_5$-$d$-$v_1$ as the boarder between $\Sigma$ and $\Sigma'$. 

   \begin{figure}[h]
   \begin{center}
   \includegraphics[scale=0.85]{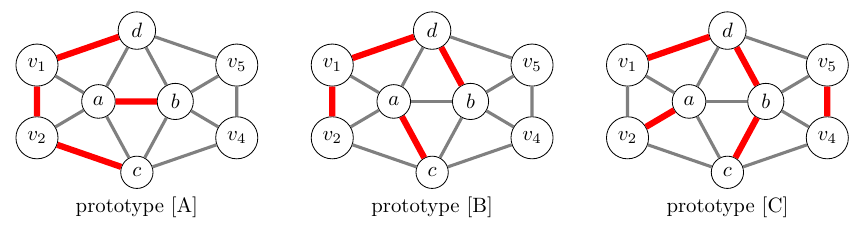}
   \end{center}
   \caption{Prototypes of red edge-coloring around vertex $a$} \label{fig:M4colorable} 
   \end{figure}
   \begin{figure}[h]
   \begin{center}
   \includegraphics[scale=0.85]{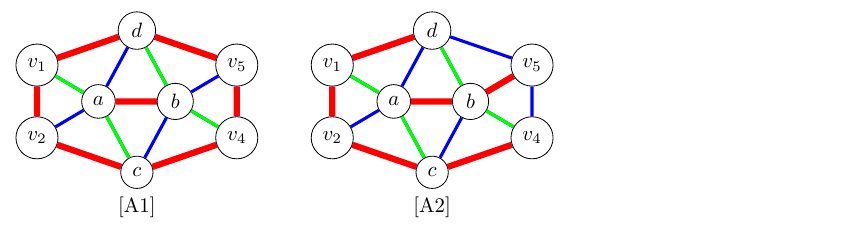}
   \includegraphics[scale=0.85]{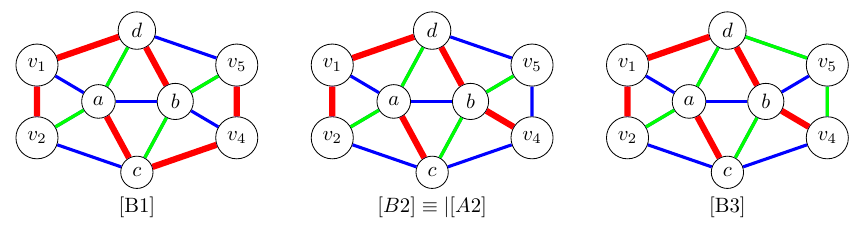}
   \includegraphics[scale=0.85]{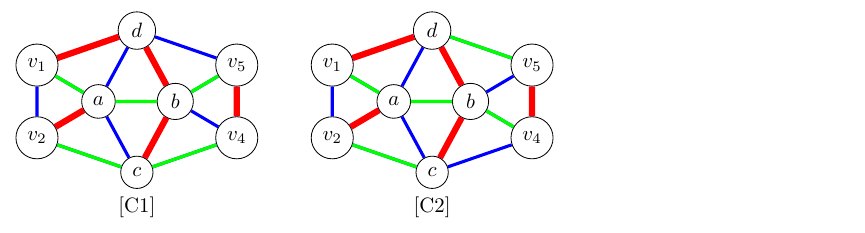}
   \end{center}
   \caption{All possible types of RGB-tilings on $M$} \label{fig:M4colorable2}  
   \end{figure}   
   
Because $\Sigma'$ is symmetric w.r.t.\ the vertical line and the horizontal one, we will only explore those representatives of RGB-tilings on $M$. We can first consider all possible R-tilings on $\Sigma$ to force $M$ 4-colorable.  We get three prototypes of red edge-coloring around vertex $a$ in Figure~\ref{fig:M4colorable}. The details are given in Figure~\ref{fig:M4colorable2}.

Depending on what $M$ is, at least one of these six patterns of RGB-tiling (including their symmetric patterns) on $\Sigma$ can extend to $\Sigma'$, and then we can fulfill the assumption that $M$ is 4-colorable. Any kind of possible Kempe chains in $\Sigma'$, which are prepared for these six patterns of RGB-tiling on $\Sigma$, cannot create any R/G/B odd-cycle.     

   \begin{remark}
We can drop $[B2]$ from the list in Figure~\ref{fig:M4colorable2}, because $[B2] \equiv |[A2]$.      
   \end{remark}
   
   \begin{lemma} \label{thm:NoIntersection}
(a): There is no intersection between $\{[S_0], [S_1], \ldots, [S_9], [S_x]\}_\text{sym}$ and $\{[A1], [A2], [B1], [B3], [C1], [C2]\}_\text{sym}$, where the subscript means these two sets consist all symmetric images w.r.t.\ the vertical line and the horizontal one. (b): Particularly there is not intersection between their own $Co(\Phi)$. Otherwise, $EP$ with $T\!D:=(\{a,b\};\deg(a,b)=5)$ is 4-colorable.
   \end{lemma}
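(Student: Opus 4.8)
The plan is to argue the whole lemma through its ``otherwise'' clause, by contraposition: I will show that \emph{any} overlap between the two families would hand us a genuine RGB-tiling on all of $EP$, contradicting $EP\in e\mathcal{MPGN}4$. The two inventories entering the statement are already complete by the preceding work. On one side, Lemma~\ref{thm:2deg5} (together with Lemma~\ref{thm:CongruentEachOther}) tells us that every RGB-tiling on $EP-\{e\}$ forced by the non-4-colorability is congruent to one of $S_0,\ldots,S_9,S_x$; each such $S_i$ restricts to a genuine, odd-cycle-free RGB-tiling of the \emph{fixed} exterior $\Sigma'$ of $EP$, carrying a definite skeleton $(K_r,K_g)$ and a definite boundary coloring $Co(\Phi)$. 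On the other side, the analysis around Figures~\ref{fig:M4colorable} and~\ref{fig:M4colorable2} shows that $[A1],[A2],[B1],[B3],[C1],[C2]$, together with their symmetric images, exhaust every RGB-tiling on the \emph{interior} $\Sigma$ that can be completed into a 4-colorable MPG sharing this $\Sigma$; each is an odd-cycle-free tiling of the full interior (all edges at $a$ and $b$ present, no missing $e$) with its own $Co(\Phi)$.

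First I would make the gluing principle precise. Suppose a class in $\{[S_0],\ldots,[S_x]\}_\text{sym}$ and a class in $\{[A1],\ldots,[C2]\}_\text{sym}$ share their skeleton in $\Sigma'$ (this is exactly the intersection asserted impossible in part (a)). Splice the exterior tiling of $EP$ read off from $S_i$ to the interior tiling read off from the matching pattern along the common border $\Phi:=v_1$-$v_2$-$c$-$v_4$-$v_5$-$d$-$v_1$. Because the two halves agree on the shared edges of $\Phi$ by the matching $Co(\Phi)$, the splice is a well-defined red/green/blue edge-coloring of \emph{all} of $EP$. Each half is individually odd-cycle-free, so the only way the splice could fail to be an RGB-tiling is a monochromatic odd cycle that crosses $\Phi$.

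The heart of the argument, and the step I expect to be the main obstacle, is ruling out such a cross-border cycle. A monochromatic odd cycle meeting $\Phi$ decomposes into arcs lying in $\Sigma'$ and arcs lying in $\Sigma$, glued at the six vertices of $\Phi$. The $\Sigma'$-arcs are precisely the Kempe-chain connections recorded in the skeleton, and the $\Sigma$-arcs are the short, explicit monochromatic connections visible inside the fixed interior pattern. For part (a) I would observe that, once the skeletons coincide, the parity of every such closed walk is identical to the parity it would have inside a genuinely 4-colorable $M_\ast$ realizing that interior pattern (as in the $M_1$, $M_2$ construction of Theorem~\ref{thm:allGraphsSimilarToEP}); since such $M_\ast$ is 4-colorable it carries no monochromatic odd cycle, so neither does the splice, whence the splice is an RGB-tiling of $EP$, a contradiction. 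For part (b) the hypothesis is weaker---only that the two families share a boundary coloring $Co(\Phi)$---so I would first show that on this very small border (a $6$-cycle bounding a two-vertex interior) the data $Co(\Phi)$ together with odd-cycle-freeness of $\Sigma'$ already pins down the relevant crossing pattern of the skeleton, thereby reducing part (b) to part (a).

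Since both families are finite and small, I expect the verification to come down to tabulating the boundary coloring $Co(\Phi)$ attached to each of $S_0,\ldots,S_9,S_x$ (with their four reflection images) and to each of the six 4-colorable patterns, and checking that the two lists are disjoint. The conceptual content is the gluing principle above; the residual risk is entirely in the parity bookkeeping for cross-border monochromatic cycles, where one must confirm that fixing $Co(\Phi)$ leaves no freedom in the skeleton that could secretly evade the contradiction. Should a uniform parity argument resist, I would fall back on the finite case check, handling each coincident boundary coloring by exhibiting the explicit 4-coloring it forces, exactly in the spirit of the $M_1$, $M_2$ analysis already carried out for Theorem~\ref{thm:allGraphsSimilarToEP}.
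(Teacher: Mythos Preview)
Your contrapositive gluing strategy is sound and genuinely different from what the paper does. The paper does not argue by gluing at all; it explicitly defers the real proof to the next subsection, where it tabulates every possible boundary coloring $Co(\Phi)$ on the hexagon (organized by the multiset $(\#r,\#g,\#b)\in\{(0,0,6),(0,2,4),(2,2,2)\}$ and then by adjacency patterns of like-colored edges) and checks case by case that each pattern lands in exactly one of $ATLAS_N$, $ATLAS_4$. Your approach trades that bookkeeping for a single structural observation, which is a real gain.

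That said, the step you flag as ``the main obstacle'' is both unnecessary and, as you have written it, circular. Once you splice the $\Sigma'$-restriction of some $S_i$ (a genuine RGB-tiling of $\Sigma'$, since the abandoned edge lies in the interior of $\Sigma$) with the full RGB-tiling of $\Sigma$ given by some $[A1],\ldots,[C2]$ along a common $Co(\Phi)$, every triangle of $EP$ lies entirely in one piece and hence already carries one edge of each color; the splice is therefore an RGB-tiling of $EP$ outright, and there is no separate cross-border odd-cycle check to perform. Your proposed parity argument---comparing to ``a genuinely 4-colorable $M_\ast$ realizing that interior pattern''---would require an $M_\ast$ sharing $EP$'s own $\Sigma'$, which is exactly what you are trying to 4-color, so that step assumes its conclusion. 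Finally, your reduction of (b) to (a) runs backwards: the gluing argument proves (b) directly (matching $Co(\Phi)$ suffices to glue), and (a) then follows trivially because equal equivalence classes in particular share $Co(\Phi)$; there is no need to argue that $Co(\Phi)$ pins down the skeleton.
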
   
   \begin{proof}
Let us name $ATLAS_N: =\{[S_0], [S_1], \ldots, [S_9], [S_x]\}_\text{sym}$ temporarily, but later we will modify this set without hurting this lemma.
Also $ATLAS_4: = \{[A1], [A2], [B1], [B3], [C1], [C2]\}_\text{sym}$. Clearly, the subscripts $N$ and $4$ stand for non-4-colorable and 4-colorable. 

For (a), they should no intersection. It need patience to check these two sets in the coming subsection.     
Also, no intersection is a necessary condition for non-4-colorablility, but not a sufficient condition.
 
As for (b), it raises another important question: Does the edge-color along $\Phi$, namely $Co(\Phi)$, unique determine the 4-colorable property of $M:= \Phi \cup \Sigma \cup \Sigma'$? However, this new question is far more than what we claim only for $(EP;55)$. 

Sorry! This brief remark is not a real proof. The real proof of (a) is in the next subsection. Part (b) is just a consequence of (a), because the comprehensive study on $ATLAS_\ast$ do show that the types of $Co(\Phi)$ uniquely determine each element in $ATLAS_N\cap ATLAS_4$  and more that that. The property of unique determination so far only works for $(EP;55)$ and $(EP;Ptg)$.      
   \end{proof}

\subsection{$ATLAS$ of Figures~\ref{fig:KempeCRo2deg5}, \ref{fig:S1andS8}, \ref{fig:M4colorable2} and more}  \label{sec:ATLAS}

For convenience, {\bf all $[\ast]$ in this subsection is actually $[\ast]_\text{sym}$}. The main purpose of this subsection is to list all kinds of $Co(\Phi)$ under synonyms and symmetries; then we can offer a proof of  Lemma~\ref{thm:NoIntersection}(a) about $ATLAS_N$ and $ATLAS_4$. 

According to Subsections~\ref{sec:RnR} and~\ref{sec:4colorableDeg5Adj}, it is nature to ask: Does the union set of those $Co(\Phi)$ in Figures~\ref{fig:KempeCRo2deg5}, \ref{fig:S1andS8} and~\ref{fig:M4colorable2} consists of all possible $Co(\Phi)$ provided that $Co(\Sigma')$ is 4-colorable. The answer is no, but this union set consists of nearly all. 

By Lemma~\ref{RGB1-thm:evenoddRGB}, the array $(\#r,\#g,\#b)$ of numbers of red, green and blue edges along $Co(\Phi)$ can only be $(0,0,6)$, $(0,2,4)$ and $(2,2,2)$ under synonyms. The cases of $(0,0,6)$ and $(0,2,4)$ are easy and demonstrated by Table~\ref{tb:summary} systematically.
\begin{table}[h]
\centering
\begin{tabular}{ |c|c|c|c| } 
 \hline
\includegraphics[scale=0.85]{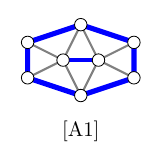} 
&
\includegraphics[scale=0.85] {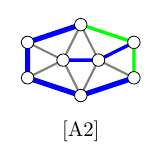}
& 
\includegraphics[scale=0.85] {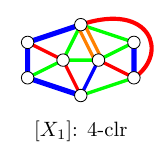} 
& 
\includegraphics[scale=0.85] {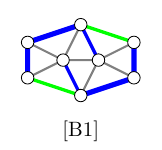}
\\
 \hline 
\includegraphics[scale=0.85] {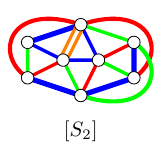}
&
\includegraphics[scale=0.85] {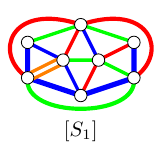}
& 
\includegraphics[scale=0.85] {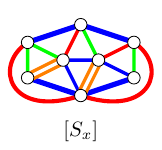} 
& { }
\\ 
 \hline
\end{tabular}\vspace{3mm}
\caption{$Co(\Sigma)$ and skeletons in $\Sigma'$  for $(0,0,6)$ and $(0,2,4)$} \label{tb:summary}
\end{table}
Notice that in this table skeletons are unnecessary for cases [A$\ast$], [B$\ast$] and [C$\ast$], because they are for colorable for any kinds of $K_\ast$ in $\Sigma'$. However, we do draw a $K_r$ for case $[X_1]$, which never showed up before\footnote{That is why we mark it by ``X''.}. We also claim $[X_1]$ 4-colorable, because of Lemma~\ref{thm:Coalpha}.  

To list all cases of  $(2,2,2)$ under synonyms, we can refer to cases of $(0,2,4)$ and then choose two blue edges along $\Phi$. But this way is so tedious and  twice the work with half the results. Let us consider the two edges in same color adjacent or not (Y/N). So we shall follow four extra requirements: $(Y, Y, Y)$, $(N, Y, Y)$, $(N, N, Y)$ and  $(N, N, N)$. Now we will demonstrate all cases in Table~\ref{tb:summary2}.      
\begin{table}[h]
\centering
\begin{tabular}{ |c|c|c|c| } 
 \hline
\includegraphics[scale=0.85]{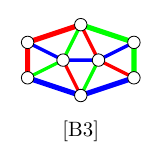} 
&
\includegraphics[scale=0.85] {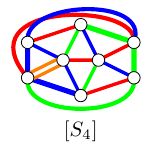}
& 
\includegraphics[scale=0.85] {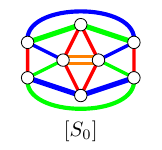} 
& 
\includegraphics[scale=0.85] {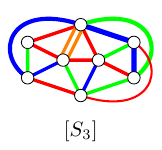}
\\
 \hline 
\includegraphics[scale=0.85] {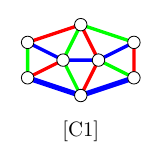}
&
\includegraphics[scale=0.85] {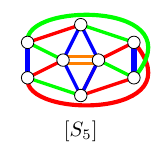}
& 
\includegraphics[scale=0.85] {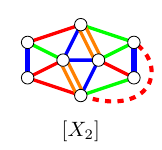} 
&
\includegraphics[scale=0.85] {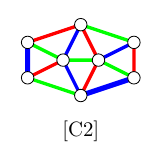}
\\ 
 \hline
\end{tabular}\vspace{3mm}
\caption{$Co(\Sigma)$ and skeletons in $\Sigma'$ for $(2,2,2)$} \label{tb:summary2}
\end{table}
Notice that there is only one case for $(Y, Y, Y)$ under synonyms.  We obtain $[X_2]$, which never showed up before. Of course this $[X_2]$ is special. Later we will find it ubiquitous in further study. The two yellow double-lines are two $e$-diamonds of Type B involving three edge-colors; therefore we can not get any information on $\Sigma'$ from these two $e$-diamonds. It is not a problem comes two or more $e$-diamonds. For instance, $[S_x]$ in Figure~\ref{fig:S1andS8} has two $e$-diamonds. The good thing is these two $e$-diamonds of Type B involving two edge-colors. Even if $S_x$ comes out of nowhere, rather than what we just showed that it is from $[S_1]$, we still can build up $K_g|_c^{v_1}$ and $K_g|_c^{v_5}$. 

   \begin{remark} \label{re:X2}
In the graph $[X_2]$ given in Table~\ref{tb:summary2}, we draw a dashed $K_r|_c^{v_5}$ on purpose. Actually, once [$X_2$] appears it must have (a): either $K_r|_c^{v_5}$ or $K_r|_d^{v_4}$; and (b): either $K_g|_c^{v_1}$ or $K_g|_d^{v_2}$. It is possible that four kinds of combinations of (a) and (b) all suit for this $EP$ with $T\!D:=(\{a,b\};\deg(a,b)=5)$, but at least one combination exists. Without loss of generality, we assume $K_r|_c^{v_5}$ appears in $[X_2]$. Then we process ECS on $rGCL(cv_4)$ and then obtain the second graph in Figure~\ref{fig:X2S3}. The second graph ($[S_3]$, Type B) and the third  graph ($[S_3]$, Type A) have same $Co(\Phi)$, but only Type A can guarantee two Kempe chains: $K_g|_d^{v_2}$ and $K_b|_d^{v_4}$. From the second graph to the third graph, the process can be done by ECS or directly by \emph{$\Sigma$-adjustment}, which is a modification inside $\Sigma$ and will be introduced later formally. Let us make  the conclusion of this remark: If $[X_2]$ exists, then $[X_2]\cong [S_3]$. But so far the existence is not guaranteed.      
   \begin{figure}[h]
   \begin{center}
   \includegraphics[scale=0.82]{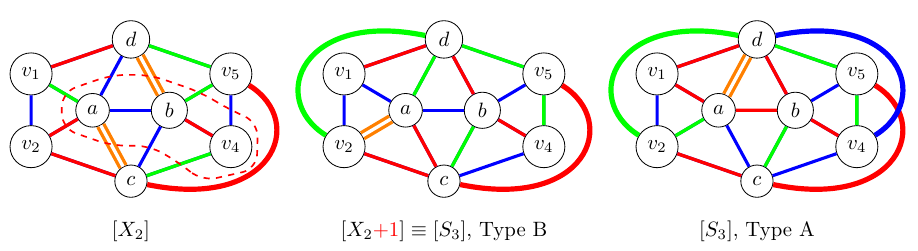}
   \end{center}
   \caption{$[X_2]$ and $[S_3]$} \label{fig:X2S3} 
   \end{figure}
   \end{remark}
   \begin{remark} \label{re:X2Sx}
Our curiosity on $[X_2]$ has not ended yet. There are two possible blue Kempe chains, namely $K_b|_{v_1}^{v_5}$ and $K_b|_{c}^{d}$, and at least one exists\footnote{Be careful! They never co-exist; they might exist for different RGB-tilings.}. Therefore, we can perform two possible ECS and then obtain $[S_5]$ and $[S_x]$ shown as the second graph and the third one in Figure~\ref{fig:X2S5Sx}. Not we realize that $[X_2]$ connects to many different colleagues in $ATLAS_N$      
   \begin{figure}[h]
   \begin{center}
   \includegraphics[scale=0.82]{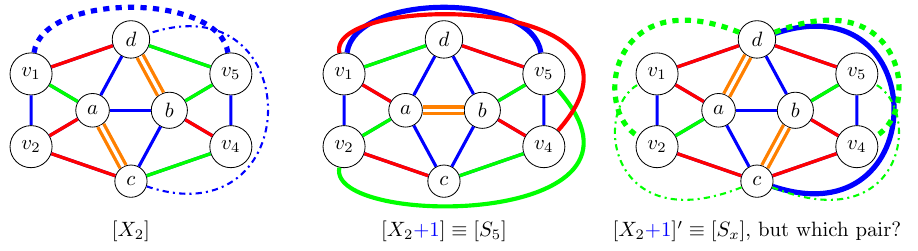}
   \end{center}
   \caption{$[X_2]$ and $[S_3]$} \label{fig:X2S5Sx} 
   \end{figure}   
   \end{remark}
   \begin{remark} \label{re:Sx}
The second graph in Figure~\ref{fig:X2S5Sx} has $K_b|_{v_1}^{v_5}$. It does not means that every $[S_5]$ equips with $K_b|_{v_1}^{v_5}$. It is possible that another $S_5$ RGB-tiling has $K_b|_{c}^{d}$. The above argument can also apply on the third graph in Figure~\ref{fig:X2S5Sx}.   
   \end{remark}
   \begin{remark} \label{re:S3}
Comparing two $[S_3]$  in Figure~\ref{fig:KempeCRo2deg5}\footnote{Please, replace $K_g|_{v_2}^{v_4}$ by $K_g|_{d}^{v_2}$; because we only guarantee this green-connectivity.} and in Figure~\ref{fig:X2S3} and under synonyms, we find that the new one (latter one) has three Kempe chains of three different colors. Remark~\ref{re:X2} offer a reason of the existence of this new $K_r|_c^{v_5}$. We provide another reason. If there is no $K_r|_c^{v_5}$, then there should be a $K_r|_d^{v_4}$ and then we can turn this $[S_3]$ to be $[X_1]$ in Table~\ref{tb:summary2}; thus this $EP\not\in e\mathcal{MPGN}4$ and a contradiction. Now we realize how interesting and important Lemmas~\ref{RGB2-thm:deg5tangling} and~\ref{thm:Coalpha} are. One more important thing: thanks for this ``another reason'', we can say $[X_2]$ is obtained from $[S_3]$ by performing ECS on $rGCL(cv_4)$ in Figure~\ref{fig:X2S3}. {\bf So, the existence of $[X_2]$ is guaranteed.} Otherwise, the process that we did in Remark~\ref{re:X2} bases on the assumption of existence of $[X_2]$ and it is possible that $[X_2]$ does not exist. Now we complete entire $ATLAS_N$ theoretically.          
   \end{remark}
   \begin{remark} \label{re:S4}
Let us re-check $[S_4]$ in Figure~\ref{fig:KempeCRo2deg5}. Using $\Sigma$-adjustment, we find a new $K_b|_{v_1}^{v_5}$. This refinement has another reason. If there is no $K_b|_{v_1}^{v_5}$, then there should be a $K_b|_d^{v_4}$ and then we can turn the original $[S_4]$ (the left graph) to be 
to be [A2];  thus this $EP\not\in e\mathcal{MPGN}4$ and a contradiction.
   \begin{figure}[h]
   \begin{center}
   \includegraphics[scale=0.85]{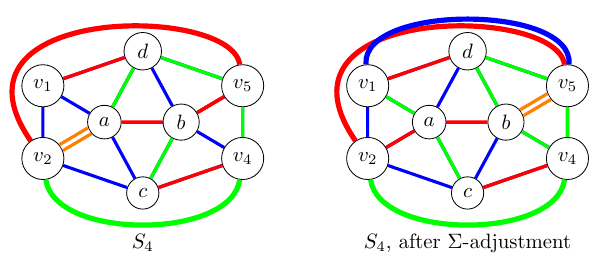}
   \end{center}
   \caption{$[S_3]$ and $\Sigma$-adjustment} \label{fig:S4} 
   \end{figure}
   \end{remark}  

Now finally we finish a proof of Lemma~\ref{thm:NoIntersection}(a). Not only a proof but also we list $ATLAS_N$ more precise with one more element $[X_2]$. Let us conclude this whole section as following theorem: 

   \begin{theorem} \label{thm:atlas}
Let $EP\in e\mathcal{MPGN}4$ with $T\!D:=(\{a,b\};\deg(a,b)=5)$. Every element  in $ATLAS_N:=\{[S_0], [S_1], \ldots, [S_5], [S_x], [X_2]\}_\text{sym}$ will appear by doing a series of ECS starting from $[S_0]$ or from any one in $ATLAS_N$.     
   \end{theorem}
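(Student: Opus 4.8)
The plan is to show that the entire finite set $ATLAS_N:=\{[S_0], [S_1], \ldots, [S_5], [S_x], [X_2]\}_\text{sym}$ forms a single \emph{connected} family under the operation ``perform one ECS on a major generalized canal ring,'' so that from any starting element we can reach every other element by a finite sequence of such moves. Since Equation~\ref{eq:T55S09} already establishes the chain $[S_0]\cong[S_1]\cong\cdots\cong[S_5]\cong\cdots\cong[S_{10}]$ via the ten rock-n-roll processes, and Equation~\ref{eq:T55Sx} together with Lemma~\ref{thm:CongruentEachOther} pulls $[S_x]$ and its symmetric images into the same congruence class, the bulk of the reachability is inherited directly from Subsection~\ref{sec:RnR}. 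The only genuinely new point is that $[X_2]$ must also be woven into this connected web, and that every element is reachable \emph{starting from $[S_0]$}, not merely mutually congruent.

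First I would assemble the backbone: the ten ECS steps of Figure~\ref{fig:KempeCRo2deg5} give a closed loop $S_0\to S_1\to\cdots\to S_9\to S_{10}\equiv|S_0$ of single-ECS transitions, and because $S_8\equiv|S_2$, $S_9\equiv S_1$ (Remark~\ref{re:Sx0}), the symmetric closure $\{\,\cdot\,\}_\text{sym}$ collapses the redundant tail so that $[S_6],\ldots,[S_9]$ contribute no new elements beyond the symmetric images of $[S_1],\ldots,[S_4]$. Thus after taking $\{\,\cdot\,\}_\text{sym}$ the backbone already realizes $[S_0],[S_1],\ldots,[S_5]$ and all their reflections as nodes joined by ECS edges. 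Next I would attach $[S_x]$: by Remark~\ref{re:Sx0} a single ECS on $gGCL(dv_1)$ in $S_1$ produces $S_x$, so $[S_1]\to[S_x]$ is one ECS move, splicing $[S_x]_\text{sym}$ onto the backbone.

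The crucial new strand is $[X_2]$, and here I would lean entirely on Remarks~\ref{re:X2}, \ref{re:X2Sx} and especially~\ref{re:S3}. Remark~\ref{re:S3} supplies the ``another reason'' argument: in $[S_3]$ either $K_r|_c^{v_5}$ exists, or else $K_r|_d^{v_4}$ exists and an ECS would convert $[S_3]$ into $[X_1]$, forcing $EP$ to be 4-colorable by Lemma~\ref{thm:Coalpha} (via Table~\ref{tb:summary}) --- a contradiction with $EP\in e\mathcal{MPGN}4$. Hence $K_r|_c^{v_5}$ is present, and then Remark~\ref{re:X2} shows that ECS on $rGCL(cv_4)$ carries $[S_3]$ to $[X_2]$ (Figure~\ref{fig:X2S3}), so that $[X_2]$'s \emph{existence} is guaranteed and the move $[S_3]\to[X_2]$ is a legitimate single ECS. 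Conversely Remarks~\ref{re:X2} and~\ref{re:X2Sx} show that from $[X_2]$ one reaches $[S_3]$, $[S_5]$ and $[S_x]$ by single ECS moves on the appropriate canal rings. Assembling all of these, the undirected graph whose vertices are the elements of $ATLAS_N$ and whose edges are single-ECS transitions is connected, and $[S_0]$ lies in this connected component; therefore starting from $[S_0]$ (or from any vertex) a finite series of ECS reaches every element.

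\textbf{Main obstacle.} The delicate part is not the combinatorial bookkeeping of which ECS connects which pair, but verifying that $[X_2]$ \emph{actually occurs} for a genuine $EP$ rather than appearing only hypothetically. The danger, flagged in Remark~\ref{re:X2} (``so far the existence is not guaranteed'') and resolved in Remark~\ref{re:S3}, is that one might define $[X_2]$ by running ECS \emph{from} $[X_2]$ under the assumption it exists, which would be circular. The resolution is to obtain $[X_2]$ as the \emph{output} of ECS applied to the already-established $[S_3]$, using the tangling dichotomy of Lemma~\ref{thm:Coalpha} to exclude the alternative $K_r|_d^{v_4}$ branch; I would therefore present the existence of $[X_2]$ as a consequence of $[S_3]$ plus non-4-colorability, and only then assert the reachability in both directions. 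A secondary subtlety is confirming that each claimed transition is a single \emph{major} generalized-canal-ring ECS (non-major rings inside a Kempe component, as warned after Definition~\ref{def:GCLconjugate}, do not move the skeleton in $\Sigma'$); this is handled by always choosing the ring that crosses $\Phi$ and alters the Type~A $e$-diamond, exactly as in the rock-n-roll construction.
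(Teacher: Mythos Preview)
Your proposal is correct and follows essentially the same route as the paper: the backbone $[S_0],\ldots,[S_5]$ and $[S_x]$ come from the rock-n-roll in Subsection~\ref{sec:RnR} together with Remark~\ref{re:Sx0}, and the only new ingredient is the existence of $[X_2]$, which you (like the paper) obtain from $[S_3]$ via the dichotomy argument of Remark~\ref{re:S3}. The paper's own proof is in fact a single sentence pointing to Remark~\ref{re:S3}; your write-up simply unpacks that reference explicitly and correctly flags the circularity hazard that Remark~\ref{re:S3} is designed to avoid.
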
 
   \begin{proof}
Remark~\ref{re:S3} has already shown the existence of $[X_2]$, because it can be obtained from $[S_3]$.     
   \end{proof}

	\begin{remark}
Recall the interesting problem that we left in Figure~\ref{fig:MaandMb}. For symmetry, we only show $M^+_a$ is 4-colorable. If non-4-colorable $(EP;55)$ do exist, then the RGB-tiling $[S_x]$ exists. We only need the R-tiling of $[S_x]$ on $\Sigma'$. Put this R-tiling on the same $\Sigma'$ of $M^+_a$ and also color 5 edges inside $\Sigma$ shown in Figure~\ref{fig:MaandMb2}. Now the whole R-tiling on $M^+_a$ has no odd-cycle. 
By Theorem~\ref{RGB1-thm:4RGBtiling}, $M^+_a$ is 4-colorable.
  \begin{figure}[h]
   \begin{center}
   \includegraphics[scale=0.9]{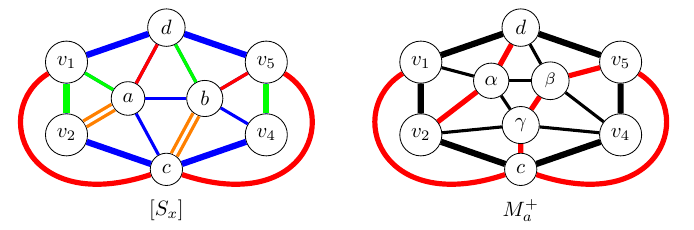}
   \end{center}
   \caption{From $[S_x]$ to $M^+_a$} \label{fig:MaandMb2} 
   \end{figure}
   \end{remark}

\section{Theory behind ECS on generalized canal rings}
\label{sec:theoryBehind}

We have already performed ECS on generalized canal rings many times, but we did not talk clearly about the theory. In this sections we explain our standard of process (SOP) to perform ECS on generalized canal lines or rings, and then investigate new Kempe chains in $\Sigma'$. 

\subsection{Topic for discussion $T\!D$ and boarder $\Omega$ in $EP$} \label{sec:TD}
Given a fixed $EP\in e\mathcal{MPGN}4$, let $T\!D:=(\{u_1, u_2,\ldots, u_k\};\text{requirements})$ consist of some chosen vertices from $EP$ as the \emph{topic for discussion}, where \emph{{requirements}} set up the particular type of $u_1, u_2,\ldots, u_k$. We also use $T\!D$ to represent the induced subgraph made by the vertices of $T\!D$. Usually we require this subgraph $T\!D$ connected and solid, where``solid'' means no hole; and it had better to be 2-connected if $k\ge 4$. The \emph{boarder} $\Omega$ (sometimes $\Phi$) is a subgraph induced by the vertex set consists of all surrounding neighbors of $T\!D$. Clearly, $\Omega$ is a cycle surrounding $T\!D$, unless $T\!D$ is so weird. Let  $\Sigma:=\Omega \cup T\!D$ and $\Sigma':=EP-T\!D$. Clearly $\Sigma\cap\Sigma'=\Omega$.  For example $T\!D:=(\{v\};\deg(v)=5)$ and $\Omega:=v_1$-$v_2$-$\ldots$-$v_5$-$v_1$ in Figure~\ref{fig:KempeCRotation}.    

Due to the definition of $e\mathcal{MPGN}4$ and Theorem~\ref{RGB1-thm:eMPG4}(b), a 4-semi-MPG $EP-\{e\}$ for any $e\in E(EP)$ is 4-colorable and there is at least an RGB-tiling on $EP-\{e\}$. By Theorem~\ref{RGB2-thm:EPediamond2}, RGB-tilings with Type A and Type B $e$-diamond both exist and are congruent to each other in pairs. Please, recall our premature concept of $ATLAS_N$ and $ATLAS_4$ in Section~\ref{sec:rotationDualKempe2}. 
To study $(EP;T\!D)$, we keep improving or refining these two sets. A \emph{primary} RGB-tiling with a particular $e$-diamond, like $T_\alpha^{55}$ and $T_\beta^{55}$ in Figure~\ref{fig:KempeCRo2deg5basic}, are the original members of $ATLAS_N$, which are two different RGB-tilings on $EP-\{ab\}$. Actually the \emph{primary}\footnote{The \emph{primary} RGB-tilings and the \emph{initial} ones are different. Please, wait for the definition of the latter ones.} RGB-tilings for the discussion on  $(EP;T\!D)$ can be all inner edges $e$ in $\Sigma$, and then we define the \emph{primary atlas} of RGB-tilings w.r.t.\ $(EP;T\!D)$: 
	\begin{eqnarray*}
ATLAS_N(EP;T\!D) & \overset{\text{\tiny tmp}}{:=}& \{T^{T\!D}\mid \text{ RGB-tilings, either Type A or Type B,}\\
& &\text{on $EP-\{e\}$ for $e\in E(\Sigma)-E(\Omega)$}\}. 	
	\end{eqnarray*}
For example, see $ATLAS_N$ in the proof of Lemma~\ref{thm:NoIntersection}.  
	
	\begin{remark}
Why did we only consider Type A for our initial $ATLAS_N$ in the last section? Because we get benefit from $T\!D$ containing some vertices of degree 5 and any Type B $e$-diamond with $e$ incident to these vertices degree 5 can be transformed to Type A $e'$-diamond without changing $Co(\Phi)$ and the associating skeleton.      
  	\end{remark}

On the other hand, to attack the non-4-colorability of $(EP;T\!D)$, we consider all possible local 4-colorable functions on $\Sigma$, each of which is also an RGB-tiling on $\Sigma$. So we construct the following set:
	\begin{eqnarray*}
ATLAS_4(T\!D) & \overset{\text{\tiny tmp}}{:=}& \{T^{T\!D}\mid \text{ RGB-tilings on $\Sigma$}\}. 	
	\end{eqnarray*}

	\begin{remark}
The main idea of whole project is to prove by contrapositive with the assumption that $EP$ exists. The existence of $(EP;T\!D)$ derives every element in $ATLAS_N(EP;T\!D)$ must exist and $ATLAS_4(T\!D)$ must be empty. Once we find  $ATLAS_N(EP;T\!D) \cap ATLAS_4(T\!D)$ non-empty, then $(EP;T\!D)$ is impossible; but not every $EP$. The edge-color $Co(\Omega)$	plays an important role on checking $ATLAS_N(EP;T\!D) \cap ATLAS_4(T\!D)=\emptyset$ or not. 
	\end{remark}

Not just for one particular \emph{abandoned} edge $e$, we might consider a particular set of \emph{abandoned} edges, denoted by $\{\ast\}$ when the elements are not chosen yet, and consider any RGB-tiling on $EP-\{\ast\}$, i.e., a combination of $e$-diamonds with Types A and B. There are two different reason to concern about a set $\{\ast\}$ containing multiple edges: 
	\begin{enumerate}
\item This new RGB-tiling on $EP-\{\ast\}$ is obtained from one of the primary RGB-tilings. For example, $[S_x]$ in Remark~\ref{re:Sx0}.
\item Sometimes we are forced to check all possible cases of $Co(\Omega)$. We might find some $Co(\Omega)$ that were not investigated yet. So, after studied they can be sorted into $ATLAS_N(EP;T\!D)$ or $ATLAS_4(T\!D)$.  For example, $[X_1]$ and $[X_2]$ in Subsection~\ref{fig:M4colorable2}. However, $[X_2]$ obtained from one of the primary RGB-tilings is still very important.  
	\end{enumerate}	   
The \emph{secondary} $ATLAS_N(EP;T\!D)$ consists of these new RGB-tilings on $EP-\{\ast\}$ (non-4-colorable on $EP$) together with all primary ones; similarly for the \emph{secondary} $ATLAS_4(T\!D)$.

Yes, we do have the \emph{tertiary} $ATLAS_N(EP;T\!D)$. In this extended collection, we consider $\{\ast\}$ to be a subset of $E(\Sigma)$; however the edges in $\{\ast\}$ still form multiple $e$-diamonds of Type A and Type B. For instance, $Co_\alpha(Ptg)$, $Co_\beta(Ptg)$ and $Co_\gamma(Ptg)$ in Figures~\ref{fig:PtgColorable} and~\ref{fig:PtgColorable2}.     

	\begin{remark}  \label{re:secondary}
Why do we always emphasize that new (secondary and tertiary) RGB-tilings on $EP-\{\ast\}$ are obtained from a primary one? Because we assume that $(EP;T\!D)$ exists, thus every primary RGB-tilings on $EP-\{e\}$ exists by Theorem~\ref{RGB2-thm:EPediamond2}. We have to guarantee all element in the secondary and the tertiary sets exist under this assumption. Therefore, $ATLAS_N(EP;T\!D) \cap ATLAS_4(T\!D)=\emptyset$ or not is really a crucial point to judge $(EP;T\!D)$. 	
	\end{remark}

\subsection{The magic of the yellow double-lines and congruence relation} \label{sec:yellowDL}
The magic of the yellow double-lines (\emph{abandoned} edges) is that replacing it by either red, green or blue, we will get an odd-cycle of the same color, which is called a \emph{Kempe chain}. Given an $e$-diamond, for Type A, there are two non-trivial Kempe chains of different colors; as for Type B, there is only one. Kempe chains are just representatives because it is possible that a clusters of red/green/blue paths linking the two end-vertices of $e$. In other words, a red Kempe chain represents red-connected property in $\Sigma'$. Please, see Section~\ref{RGB2-sec:tanglingProperty} for details.

When we have multiple $e$-diamonds, the Kempe chains in this RGB-tiling on $EP-\{\ast\}$ need to judge case-by-case. For instance,$[S_x]$ in Figure~\ref{fig:S1andS8} and $[X_2]$ in Remark~\ref{re:X2}. 

Following Remark~\ref{re:secondary}, we understand that the co-existence of certain RGB-tilings on different $EP-\{\ast\}$ built by congruence relation is so important. Congruence relation is based on performing ECS on a canal ring or a generalized canal ring. Performing ECS on a canal ring will create a new RGB-tiling on $EP-\{\ast\}$ without changing $\{\ast\}$; however performing ECS on a generalized canal ring will change to a new $\{\ast\}$. The former is easily passed on theory, but the latter need to be explained more precisely. 

Let us consider an RGB-tiling $T^{T\!D}_\alpha(EP-\{e_{\alpha 1},e_{\alpha 2},\ldots\})$,
where $\{e_{\alpha 1},e_{\alpha 2},\ldots\}$ is a set of inner edges of $\Sigma$ such that no two of them in a single triangle. Suppose a generalized canal ring $rGCL$ is a part of $T^{T\!D}_\alpha(EP-\{e_{\alpha 1},e_{\alpha 2},\ldots\})$ and it is the one we would like to perform ECS to get a new RGB-tiling $T^{T\!D}_\beta(EP-\{e_{\beta 1},e_{\beta 2},\ldots\})$. This $rGCL$ must have segments of two kinds. The first kind is along a red Kempe chain $K_r|_u^v$ (or maybe more) in $\Sigma'$.
Thus $rGCL \cap \Sigma'$ must be a normal red canal line(s) and performing ECS on $T^{T\!D}_\alpha$ will still make the new $T^{T\!D}_\beta$ still an RGB-tiling on $\Sigma'$. Notice that the tangling property in Section~\ref{RGB2-sec:tanglingProperty} might happen for this reason. Here we bring back an important question: Besides degree 5, does there any other situation have tangling property?

The segment(s) of the second kind is the part $rGCL \cap \Sigma$ and this is the real ``generalized'' part. Let us review 
the rules of ECS on a red generalized canal line or ring:
   \begin{itemize}
\item Switch edge colors of green and blue, just like what we do for a normal red canal line;  
\item Switch edge colors of red and yellow double-line, and this switching rule is what we perform for the ``generalized'' segment.
\item To perform edge-color-switching on (or along) a green/blue generalized canal ring, we just apply the two items above, under symmetry of three colors. 
   \end{itemize}      
After ECS, by the second rule above, some yellow edges in $\{e_{\alpha 1},e_{\alpha 2},\ldots\}$ turn to be red; however in view of R-tiling on $EP$, they are assumed to be red already. The remained unchanged yellow edges in $\{e_{\alpha 1},e_{\alpha 2},\ldots\}$ together with the some original red edges that passed by $rGCL$ form the new yellow-edge-set $\{e_{\beta 1},e_{\beta 2},\ldots\}$. With this new yellow-edge-set, we shall investigate new $K_g$ and $K_b$. This this the key point of our ECS process. {\bf We preserve all $K_r\cap \Sigma'$, and try to build new $K_g \cap \Sigma'$ and $K_b \cap \Sigma'$ for $T^{T\!D}_\beta$.} Now we can consider $T^{T\!D}_\beta(EP-\{e_{\beta 1},e_{\beta 2},\ldots\})$ and use a generalized canal ring $gGCL$ or $bGCL$ to perform the next ECS. The sequence of process is just like what did in Figure~\ref{fig:KempeCRo2deg5}. 

	\begin{remark}
Recall the red Kempe chain $K_r|_u^v$ described in the last two paragraph. Suppose that edges $uu'$ and $vv'$ are along $\Omega$ and crossed by $rGCL$. According the first rule above, the colors on $uu'$ and $vv'$ are just switched between green and blue, and then  $Co(\Omega)$ is changed. Thus, the sequence of ECS process can explore many different kinds of 	$Co(\Omega)$ for $(EC; T\!D)$. 
	\end{remark}

Besides Figure~\ref{fig:KempeCRo2deg5}, we also explored $ATLAS_\ast$ in Subsection~\ref{sec:ATLAS}. Without this hard work, we probably cannot find the interesting $[x_1]$ and $[X_2]$. Do we really need to explored $ATLAS_\ast$? It depends on what kind of $T\!D$ that we discuss. The more fundamental structure of $T\!D$, the more details we need to know. 

\subsection{$\Sigma$-adjustments and $\{\ast\}$}  \label{sec:SigmaAdjust}
In the last subsection we wrote: {\bf We preserve all $K_r\cap \Sigma'$, and try to build new $K_g \cap \Sigma'$ and $K_b \cap \Sigma'$ for $T^{T\!D}_\beta$.} The general way to do is observing the new set $\{e_{\beta 1},e_{\beta 2},\ldots\}$ in $\Sigma$; however, we have many different ways to build new $K_g \cap \Sigma'$ and $K_b \cap \Sigma'$ as skeleton, and we had better do our best to find them. For example, the red Kempe chain $K_r|_c^{v_5}$ of $[S_3]$, in Table~\ref{tb:summary2} or in Remark~\ref{re:S3}, is obtained by exclusive law.

The method of $\Sigma$-adjustments has  been already used in Remarks~\ref{re:Sx0}, \ref{re:X2} and~\ref{re:S4}. There are two major methods to perform a $\Sigma$-adjustment:
	\begin{enumerate}
\item Find any generalized canal ring  inside $\Sigma$ and perform ECS. Then we have new set of abandoned edges $\{e'_{\beta 1},e'_{\beta 2},\ldots\}$ in $\Sigma$ to build new $K_g \cap \Sigma'$ and $K_b \cap \Sigma'$.
\item Just keep $Co(\Omega)$ of this moment unchanged. Try to re-arrange a new 
single color tiling inside $\Sigma$. Then according $Co(\Omega)$ to complete the other two edge-coloring. Most of time, we cannot obtain an RGB-tiling on $\Sigma$ but one on $\Sigma-\{\ast\}$. Now try to build new $K_g \cap \Sigma'$ or $K_b \cap \Sigma'$.
	\end{enumerate}
The reader can practice method (1) using $[S_1]$ or $[S_4]$, and a red generalized canal ring inside $\Sigma$. All [A1], [A2], [B1], [B2], [B3], [C1], [C2] were made by method (2) without any abandoned edge. Also $[X_1]$ and $[X_2]$ were studied by method (2) at the very beginning. 

Working on $ATLAS_\ast$ in Subsection~\ref{sec:ATLAS} is so tedious, but the job in Subsection~\ref{sec:RnR} is standard.  

Because $EP$ is the extremum or the smallest non-4-colorable MPG, we can only 4-color $EP-\{e\}$ and some time $EP-\{\ast\}$. The theory of $e$-diamond is much easier, for it can be only be Type A and Type B, which are co-existing. A large set $\{\ast\}$ of abandoned edges make a coloring or an RGB-tiling on both $\Sigma$ and $\Sigma'$, as well as $Co(\Omega)$, more complicated. Also we need to be very careful: Without linking to a certain congruent RGB-tiling $T_\alpha^{T\!D}(EP-\{e\})$, we have no right to guarantee the existence of $T_\beta^{T\!D}(EP-\{\ast\})$.

\section{Three degree 5 vertices in a triangle}
\label{sec:3degree5}

Let us consider the situation that three vertices of degree 5 in $EP$ form a triangle. Our have new $T\!D:=(\{a,b,c\};\deg(a,b,c)=5)$ or ``$5^3$'' in short that consists of three vertices of degree 5 shown in Figure~\ref{fig:3deg5a} and $\Omega:=d$-$v_1$-$v_2$-$\ldots$-$v_5$-$d$.

By Lemma~\ref{thm:2deg5} we choose the first graph $[T_\alpha^{55}]$ in Figure~\ref{fig:KempeCRo2deg5basic} to discuss. The following two graphs, $[T_1^{5^3}]$ and $[T_2^{5^3}]$, are determined by $cv_3$ being blue or green. However, given $cv_3$ blue, we see a 5-cycle $K_g\cup \{ab\}$. By Lemma~\ref{thm:Coalpha} or the tangling property for $EP$ with $\deg(a,b)=5$, we have a contradiction.  
   \begin{figure}[h]
   \begin{center}
   \includegraphics[scale=0.87]{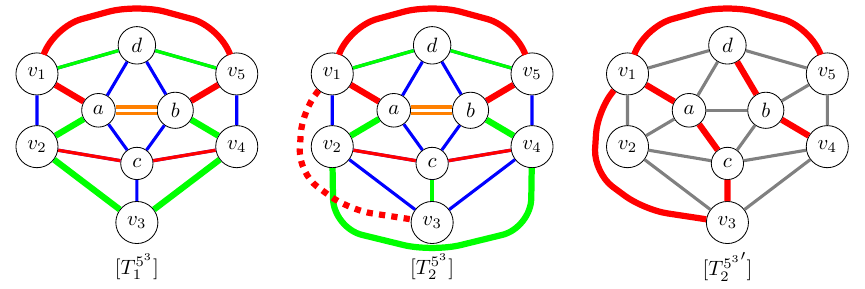}
   \end{center}
   \caption{Only $[T_2^{5^3}]$ is is good for $EP$, not $[T_1^{5^3}]$} \label{fig:3deg5a} 
   \end{figure} 
Thus, only $[T_2^{5^3}]$ can be the proper RGB-tiling for this $(EP;T\!D)$. In addition, we claim that the red dashed path exists. We simply re-arrange a new red tiling inside $\Sigma$ and treat green/blue as black. The new red tiling on $EP$ is shown as the third graph $[{T_2^{5^3}}']$ and there must have at least a red odd-cycle crossing $\Sigma$. The only way is the red path $K_r|_{v1}^{v_3}$.     

Just for fun, the reader can re-arrange another new red tiling inside $\Sigma$ by setting $d$-$a$-$v_2$ and $v_3$-$c$-$b$-$v_5$ red, and then exams the new red odd-cycle crossing $\Sigma$. We leave the reader to draw this result.

   \begin{lemma} \label{the:3deg5Representative}
Let $a, b, c$ be three vertices in a triangle of $EP$ with $\deg(a,b,c)=5$. There is only one congruent class of RGB-tilings on $EP$. This congruent class has a representative shown as $[T_2^{5^3}]$ in Figure~\ref{fig:3deg5a}. Please, turn the red dashed line in $[T_2^{5^3}]$ solid.  
   \end{lemma}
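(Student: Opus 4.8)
The plan is to bootstrap from the two-adjacent-degree-$5$ analysis of Section~\ref{sec:rotationDualKempe2}. Since $a$ and $b$ are adjacent vertices of degree $5$, the pair $\{a,b\}$ is already a legitimate topic for discussion $T\!D=(\{a,b\};\deg(a,b)=5)$, and in the $55$-picture the vertex $c$ (being adjacent to both $a$ and $b$) is exactly one of the two boarder caps of the edge $ab$. Lemma~\ref{thm:2deg5} together with Theorem~\ref{thm:atlas} tells us that every RGB-tiling on $EP$ is congruent to the representative $[T_\alpha^{55}]$ of Figure~\ref{fig:KempeCRo2deg5basic}. So I would first fix $[T_\alpha^{55}]$ and then read off what the extra hypothesis $\deg(c)=5$ forces when $c$ is promoted from a boarder vertex to a central one and the boarder expands to $\Omega:=d$-$v_1$-$v_2$-$\ldots$-$v_5$-$d$.

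Next I would argue that $\deg(c)=5$ pins down all of $c$'s incidences except the colour of the single new boarder edge $cv_3$, leaving exactly two candidate tilings: $[T_1^{5^3}]$ with $cv_3$ blue and $[T_2^{5^3}]$ with $cv_3$ green. The blue case must be discarded: it closes a green $5$-cycle $K_g\cup\{ab\}$ through the edge $ab$, and such a short green odd-cycle hugging a degree-$5$ vertex is forbidden. This is precisely the content of the tangling property (Lemma~\ref{RGB2-thm:deg5tangling}), equivalently Lemma~\ref{thm:Coalpha}, which would force a neighbouring degree to be at least $6$ and hence contradicts $\deg(a,b,c)=5$. Thus $[T_2^{5^3}]$ is the only surviving representative, and combined with the congruence statement of Lemma~\ref{thm:2deg5} this already yields that there is a unique congruence class of RGB-tilings on $EP$.

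The remaining task, and the step I expect to be the real obstacle, is to promote the dashed red segment of $[T_2^{5^3}]$ to a genuine red Kempe chain $K_r|_{v_1}^{v_3}$. Here I would use a $\Sigma$-adjustment of method (2) from Subsection~\ref{sec:SigmaAdjust}: hold $Co(\Omega)$ fixed, re-arrange the red tiling inside $\Sigma$ (treating green and blue as black), and pass to the associated R-tiling $[{T_2^{5^3}}']$ on all of $EP$. Since $EP$ is non-4-colorable, Theorem~\ref{RGB1-thm:4RGBtiling} guarantees that this R-tiling cannot be free of red odd-cycles, so at least one red odd-cycle must cross the region $\Sigma$. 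A case analysis of how a red cycle can thread through the rearranged $\Sigma$ under the fixed $Co(\Omega)$ then shows the only admissible crossing into $\Sigma'$ runs from $v_1$ to $v_3$, forcing $K_r|_{v_1}^{v_3}$ to exist and allowing the dashed line to be drawn solid.

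The delicate point is this last case analysis: I must verify that, for the particular rearrangement $[{T_2^{5^3}}']$, no red odd-cycle can be confined to the interior of $\Sigma$ (where it could be destroyed locally without touching $\Sigma'$), so that the forced red odd-cycle is genuinely obliged to exit $\Sigma$, and that its only available corridor into $\Sigma'$ is the $v_1$--$v_3$ one. The alternative red arrangement flagged in the ``just for fun'' paragraph (colouring $d$-$a$-$v_2$ and $v_3$-$c$-$b$-$v_5$ red) forces a companion red connection and serves as a consistency check rather than a competing corridor. Everything else reduces to a direct appeal to the $55$ machinery of Lemma~\ref{thm:2deg5} together with the degree-$5$ tangling obstruction of Lemma~\ref{RGB2-thm:deg5tangling}.
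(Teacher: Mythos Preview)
Your proposal is correct and follows essentially the same route as the paper: start from $[T_\alpha^{55}]$ via Lemma~\ref{thm:2deg5}, split on the colour of $cv_3$, kill the blue branch $[T_1^{5^3}]$ with the tangling obstruction (Lemma~\ref{thm:Coalpha} / Lemma~\ref{RGB2-thm:deg5tangling}), and then force $K_r|_{v_1}^{v_3}$ by a $\Sigma$-adjustment of type (2) producing $[{T_2^{5^3}}']$ and invoking the non-4-colorability of $EP$. The delicate point you flag---that the rearranged red tiling in $\Sigma$ admits only the $v_1$--$v_3$ exit into $\Sigma'$---is exactly the one the paper leaves to inspection of the third graph in Figure~\ref{fig:3deg5a}.
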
 

As different way to prove the existence of the red dashed in Figure~\ref{fig:3deg5a} is given as the following process. In Figure~\ref{fig:3deg5a2}, starting with the original $[T_2^{5^3}]$,  we perform two ECS on a $rGCL$ and then on a $bCL$. The result $[T_2^{5^3}+2_{rb}]$ is given as the third graph. Since we use a red generalized canal ring crossing $\Sigma'$ and the second blue canal ring is all inside $\Sigma$. So the new red Kempe chain $K_r|_{v_1}^{v^3}$ is supposed to exist in $[T_2^{5^3}]$ before we change it.    
   \begin{figure}[h]
   \begin{center}
   \includegraphics[scale=0.87]{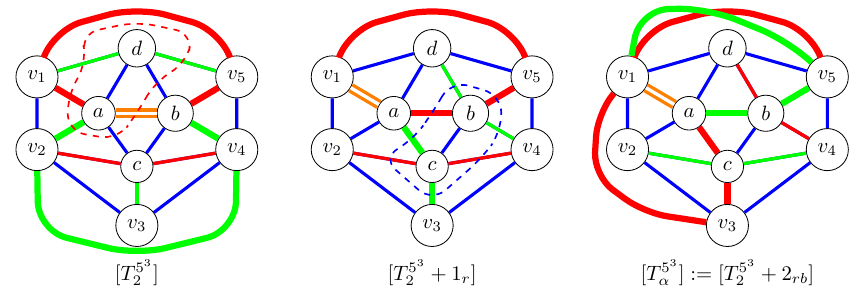}
   \end{center}
\caption{Another way to realize $K_r|_{v_1}^{v^3}$} \label{fig:3deg5a2}  
   \end{figure} 

The third graph above is very interesting and important; so we give it a special name: $[T_\alpha^{5^3}]$.  In this RGB-tiling, all edges along $\Phi$ are blue with three degree 5 vertices inside. What a symmetric structure and edge-coloring! Wait! the graph $[T_\alpha^{5^3}]$ in Figure~\ref{fig:3deg5a2} is not really symmetric. Yes, we do miss a green Kempe chain $K_g|_{v_3}^{v_5}$. Symmetry is not the reason that $K_g|_{v_3}^{v_5}$ exists. The reason can be found in Figure~\ref{fig:3deg5b}. Also notice that to draw $K_r|_{v_1}^{v_3}$ together with $K_r|_{v_1}^{v_5}$ is not different to $K_r|_{v_1}^{v_3}$ together with $K_r|_{v_3}^{v_5}$; or even to draw $K_r|_{v_1}^{v_5}$ together with $K_r|_{v_3}^{v_5}$. {\bf Because what we need is that $v_1$, $v_3$ and $v_5$ are red-connected and also green-connected.}

Again, just for fun, we develop three congruent RGB-tilings on $EP-\{e\}$ for $e=av_1,bv_5,cv_3$ in Figure~\ref{fig:3deg5b}.      
   \begin{figure}[h]
   \begin{center}
   \includegraphics[scale=0.87]{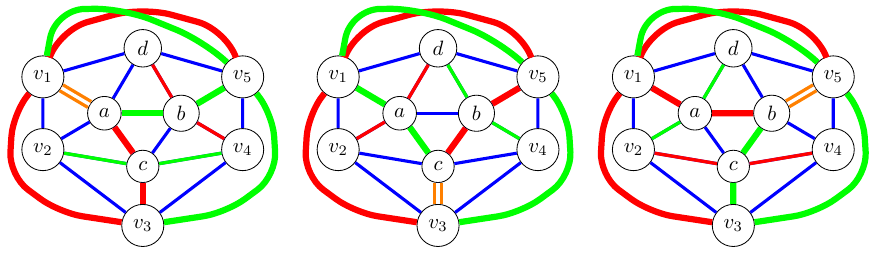}
   \end{center}
   \caption{Three congruent RGB-tilings; All of them are $[T_\alpha^{5^3}]$}     \label{fig:3deg5b} 
   \end{figure}
	\begin{lemma} \label{thm:deg5inT6blue}
Let $EP\in e\mathcal{MPGN}4$ with $a,b,c\in V(EP)$ in a triangle and $\deg(a,b,c)=5$. Three congruent RGB-tilings $[T_\alpha^{5^3}]$ shown in Figure~\ref{fig:3deg5b} must exist.
	\end{lemma}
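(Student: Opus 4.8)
The plan is to realize the three tilings of Figure~\ref{fig:3deg5b} as a single congruence class, anchored to the unique representative furnished by Lemma~\ref{the:3deg5Representative}, and then to certify that $v_1,v_3,v_5$ are simultaneously red- and green-connected in $\Sigma'$.

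First I would start from $[T_2^{5^3}]$, whose existence and red Kempe chain $K_r|_{v_1}^{v_3}$ are exactly the content of Lemma~\ref{the:3deg5Representative}. Running the two processes of Figure~\ref{fig:3deg5a2} --- ECS on the red generalized canal ring $rGCL$ crossing $\Sigma'$, followed by ECS on the blue canal ring $bCL$ lying wholly inside $\Sigma$ --- produces $[T_2^{5^3}+2_{rb}]=[T_\alpha^{5^3}]$. By the theory of Subsection~\ref{sec:yellowDL} the first step is congruence-preserving and keeps $K_r\cap\Sigma'$ intact, while the second is a $\Sigma$-adjustment (Subsection~\ref{sec:SigmaAdjust}) that never touches $\Sigma'$; hence $[T_\alpha^{5^3}]$ is congruent to $[T_2^{5^3}]$ and, under the standing assumption $EP\in e\mathcal{MPGN}4$, it genuinely exists (Remark~\ref{re:secondary}). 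In this tiling every edge of the boundary cycle $\Omega$ is blue and the three degree-$5$ vertices sit inside, and the red connectivity of $v_1,v_3,v_5$ survives the construction.

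The hard part will be to show that $v_1,v_3,v_5$ are \emph{also} green-connected, i.e.\ that the pictorially absent chain $K_g|_{v_3}^{v_5}$ really lives in $[T_\alpha^{5^3}]$ --- and to do so without appealing to the (deceptive) visual symmetry of the all-blue $\Omega$. I would argue by exclusive law, in the style of Remark~\ref{re:S3}: assume $K_g|_{v_3}^{v_5}$ fails; then, holding $Co(\Omega)$ fixed, the complementary green chain is forced, and re-arranging the green tiling inside $\Sigma$ accordingly lands the configuration in $ATLAS_4(T\!D)$, i.e.\ a local $4$-coloring that extends over $\Sigma'$ to a $4$-coloring of $EP$ --- contradicting $EP\in e\mathcal{MPGN}4$. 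This exclusion, not symmetry, is what the development in Figure~\ref{fig:3deg5b} records.

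Finally I would read off the three tilings of Figure~\ref{fig:3deg5b} as the RGB-tilings on $EP-\{av_1\}$, $EP-\{bv_5\}$ and $EP-\{cv_3\}$. Because $T\!D:=(\{a,b,c\};\deg(a,b,c)=5)$ carries the threefold rotational symmetry of the triangle $abc$, each abandoned-edge choice yields the same all-blue $Co(\Omega)$ and, by the step above, the same requirement that $v_1,v_3,v_5$ be red- and green-connected; congruence is defined through $\Omega$ and the skeleton in $\Sigma'$, so all three land in the single congruence class of Lemma~\ref{the:3deg5Representative}. They are therefore mutually congruent and coexist under the assumption, which is precisely the assertion of the lemma.
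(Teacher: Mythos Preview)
Your construction of $[T_\alpha^{5^3}]$ from $[T_2^{5^3}]$ via the two ECS moves of Figure~\ref{fig:3deg5a2}, and your appeal to Lemma~\ref{the:3deg5Representative} for the uniqueness of the congruence class, match the paper exactly. The red connectivity of $v_1,v_3,v_5$ is also handled correctly.

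The gap is in your step for the missing green chain $K_g|_{v_3}^{v_5}$. You propose an exclusive-law argument in the style of Remark~\ref{re:S3}, but you never name the ``complementary green chain'' nor exhibit the $\Sigma$-rearrangement that would land in $ATLAS_4(T\!D)$. In Remark~\ref{re:S3} the dichotomy is concrete (either $K_r|_c^{v_5}$ or $K_r|_d^{v_4}$, and the latter yields $[X_1]$); here, with an all-blue $\Omega$ and the abandoned edge $av_1$, it is not at all clear which green chain is forced when $K_g|_{v_3}^{v_5}$ fails, nor which element of $ATLAS_4$ you would hit. As written this step is a promissory note, not a proof. Your closing sentence, that ``this exclusion \dots\ is what the development in Figure~\ref{fig:3deg5b} records,'' also misreads the paper.

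The paper's route is different and more direct. It does \emph{not} argue by exclusion for the green chain. Instead, once the all-blue $Co(\Omega)$ is reached with abandoned edge $av_1$, one performs $\Sigma$-adjustments (moves entirely inside $\Sigma$, so the RGB-tiling on $\Sigma'$ is literally unchanged) to relocate the abandoned edge to $bv_5$ and to $cv_3$. Each resulting tiling is Type~A, and each Type~A $e$-diamond contributes its own pair of Kempe chains in the \emph{same} $\Sigma'$. Reading these off across the three pictures of Figure~\ref{fig:3deg5b} is what supplies $K_g|_{v_3}^{v_5}$ (and completes both the red and green connectivity of $v_1,v_3,v_5$). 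The paper's remark that ``symmetry is not the reason'' is precisely a warning against inferring the chain from the rotational symmetry of $T\!D$; the chain comes from the other abandoned-edge diamonds, not from an exclusion or from symmetry.
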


\subsection{Four degree 5 vertices in a diamond}   \label{sec:4deg5inDiamond}
Finally, we want to finish our interesting question: Can a diamond in $EP$ have all its four vertices degree 5?

   \begin{theorem}    \label{thm:4deg5inDiamond}
Let $a, b, c, d$ be four vertices in a diamond in $EP$. It is impossible that all of them are degree 5.
   \end{theorem}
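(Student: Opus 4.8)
The plan is to reduce the four-vertex diamond to the already-settled case of three degree-5 vertices in a triangle, and then to show that the fourth degree-5 vertex cannot be fitted in consistently. Realize the diamond as two triangular faces $abc$ and $abd$ sharing the edge $ab$, with $c$ and $d$ the apices across $ab$ (so that $cd\notin E(EP)$); this is the only way a diamond sits inside the triangulation $EP$. In particular $a,b,c$ form a triangle with $\deg(a,b,c)=5$, so Lemma~\ref{thm:deg5inT6blue} applies and supplies, for this very $EP$, the congruence class $[T_\alpha^{5^3}]$ in which the hexagonal border $\Omega_1:=d$-$v_1$-$v_2$-$v_3$-$v_4$-$v_5$-$d$ around $abc$ carries six blue edges, while the alternating vertices $v_1,v_3,v_5$ are pairwise red-connected and green-connected by Kempe chains living in $\Sigma_1':=EP-\{a,b,c\}$. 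The fourth vertex $d$ is exactly the border vertex lying across $ab$, between $v_1$ and $v_5$ on $\Omega_1$.

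First I would pin $d$ into this fixed picture. Since $\deg(d)=5$ and $d$ is adjacent on $\Omega_1$ to both $v_1$ and $v_5$, its five incident edges are $da$ and $db$ running into the central region $\Sigma_1$, the two blue border edges $dv_1$ and $dv_5$, and a single edge $dt_1$ leaving into $\Sigma_1'$. The interior of $[T_\alpha^{5^3}]$ is completely determined, so the colors of $da$ and $db$ are forced; together with $\Bdeg(d)\ge 2$ coming from $dv_1$ and $dv_5$, this leaves almost no freedom for the red and green edges at $d$. I would then read off the edge-coloring $Co(\Omega)$ that this tiling induces along the genuine diamond border $\Omega$ (the hexagon obtained from $EP$ by deleting all four apices $a,b,c,d$), thereby placing the tiling into $ATLAS_N(EP;T\!D)$ for $T\!D:=(\{a,b,c,d\};\deg(a,b,c,d)=5)$.

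The decisive step is the degree-5 tangling property applied at $d$. Both $v_1$ and $v_5$ are red-connected and green-connected inside $\Sigma_1'$, yet $d$ reaches $\Sigma_1'$ only through the single edge $dt_1$; I would argue that the red and the green Kempe chains issuing from $d$ are then forced to be too simple to tangle, exactly in the spirit of the proof of Lemma~\ref{thm:Coalpha} via Lemma~\ref{RGB2-thm:deg5tangling}. Equivalently, a $\Sigma$-adjustment that keeps $Co(\Omega)$ fixed exposes a monochromatic odd cycle through $t_1$ that can be broken only if $\deg(d)\ge 6$. Either formulation contradicts $\deg(d)=5$. Since the construction is symmetric under interchanging the apices $c$ and $d$, the same obstruction forbids degree 5 at every vertex of the diamond, which is the assertion.

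I expect the tangling analysis at $d$ to be the real obstacle. Lemma~\ref{thm:deg5inT6blue} controls only the skeleton in $\Sigma_1'$, so the contradiction must be extracted from the way that skeleton threads past the lone exterior edge $dt_1$: one has to show that no admissible routing of the red and green chains through $t_1$ can at once respect $\deg(d)=5$ and avoid producing an element of $ATLAS_4(T\!D)$, i.e.\ a genuine local 4-coloring of $\Sigma$. In the language of Subsection~\ref{sec:ATLAS}, the whole argument reduces to verifying $ATLAS_N(EP;T\!D)\cap ATLAS_4(T\!D)\neq\emptyset$ for this diamond; once the forced colors at $d$ are in hand, the remaining check on the compatible $Co(\Omega)$ is the standard, if tedious, bookkeeping already rehearsed for the $55$ and $5^3$ configurations.
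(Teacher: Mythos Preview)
Your plan diverges from the paper's argument and, as you yourself flag, leaves the decisive step open. The paper does not run a tangling analysis at $d$ at all. Instead it starts from $[T_2^{5^3}]$ (not $[T_\alpha^{5^3}]$): applying Lemma~\ref{the:3deg5Representative} symmetrically to the two triangles $\{a,b,c\}$ and $\{a,b,d\}$ produces a single tiling $[T^{5^4}]$ on $EP-\{ab\}$ in which the red-connectivity of $v_1,v_3,v_5$ in $\Sigma'$ is already recorded. The whole proof then consists of a single $\Sigma$-adjustment: re-choose the red edges \emph{inside} the hexagon $\Omega=v_1\text{-}\cdots\text{-}v_6$ so that the resulting global R-tiling on $EP$ has no odd red cycle. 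The known red Kempe chains in $\Sigma'$ are exactly what make the parity check go through, and Theorem~\ref{RGB1-thm:4RGBtilingGeneralization}(d) then forces $EP$ to be 4-colorable, a contradiction. No tangling, no $ATLAS$ bookkeeping.

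Your route, by contrast, hinges on showing that the dual Kempe chains at $d$ are ``too simple to tangle'' given that $d$ meets $\Sigma_1'$ through the single edge $dt_1$. This is not a situation covered by Lemma~\ref{thm:Coalpha} or Lemma~\ref{RGB2-thm:deg5tangling} as stated: those lemmas concern a degree-5 vertex sitting on a Type~A $e$-diamond with a specific $Co_\alpha$ pattern, whereas here $d$ lies on the all-blue border of $[T_\alpha^{5^3}]$ with $da,db$ colored from the interior and no yellow edge incident to $d$. You would first have to manufacture an $e$-diamond at $d$ and identify the two Kempe chains that must tangle, then argue that the red/green connectivity of $v_1$ and $v_5$ forces one of them to be a bare edge. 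That is a genuine extra lemma, not a citation, and until it is written down the argument is incomplete. The paper's $\Sigma$-adjustment sidesteps all of this by working purely with the R-tiling and a parity count.
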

   \begin{proof}
Now $T\!D:=(\{a,b,c,d\};\deg(a,b,c,d)=5)$ or $5^4$ in short, and $\Omega:=v_1$-$v_2$-$\ldots$-$v_6$-$v_1$ be the 6-cycle of the neighbors of $T\!D$. 

Let us adopt the second graph $[T_2^{5^3}]$ in Figure~\ref{fig:3deg5a} to fit
$\{a,b,c\}$ and $\{a,b,d\}$ and then we obtain the only initial status $[T^{5^4}]$ as in Figure~\ref{fig:4deg5}. Before we proceed the major proof, a very minor check need to be taken care: $v_3\neq v_6$.  Theorem~\ref{RGB2-thm:trivial4cycle} offers a proof. Additionally the red-connectivity of $v_1$, $v_3$ and $v_5$, and then these three vertices are red-disconnected with $v_6$. This fact offers another proof for $v_3\neq v_6$. Wait! We have never checked $v_1\neq v_4$ $v_1\neq v_5$, etc. Actually we should prove these facts before. Lemma~\ref{RGB1-thm:nontrivial3} offers a proof for these simple cases.       
   \begin{figure}[h]
   \begin{center}
   \includegraphics[scale=0.9]{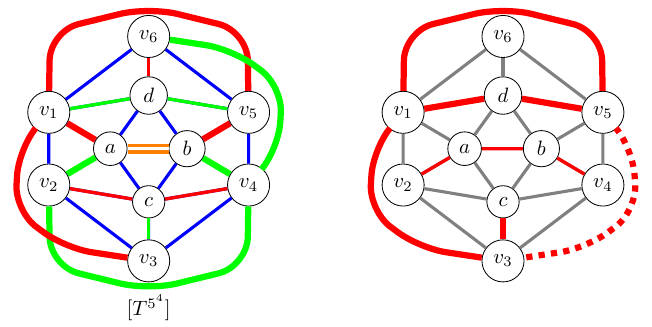}
   \end{center}
   \caption{A diamond with four degree 5's in $EP$}  
   \label{fig:4deg5} 
   \end{figure}

Now we simply re-arrange a new red tiling inside $\Sigma$ shown as the second graph in Figure~\ref{fig:4deg5}. The second graph offers an R-tiling without odd-cycle. If there is a new cycle, then it must cross $\Sigma$. First, $K_r|_{v_1}^{v_5}\cup \{dv_1, dv_5\}$ is an even-cycle. The path $v_2$-$a$-$b$-$v_4$ can not fulfill a bigger cycle because it is blocked by $K_r|_{v_1}^{v_3}$. The last thing to consider is: What about there exists $K_r|_{v_5}^{v_3}$ (red dashed line)? Well, if it exists, then the length is even by the first graph where $v_5$-$v_4$-$v_3$ is length 2 and all blue. Thus the big cycle $K_r|_{v_1}^{v_3}\cup K_r|_{v_5}^{v_3}\cup \{dv_1, dv_5\}$ is even length. By Theorem~\ref{RGB1-thm:4RGBtilingGeneralization}(d), an R-tiling$^\ast$ on an MPG must induce a 4-coloring function. Now we has a contradiction and the proof is done. 
   \end{proof} 

\subsection{Three degree 5 vertices in a triangle, continued}
\label{sec:3degree5cont}

Let us back to $T\!D:=(\{a,b,c\};\deg(a,b,c)=5)$ with $a,b,c$ adjacent. First we need to refer to Lemma~\ref{thm:deg5inT6blue}. Figure~\ref{fig:3deg5b} demonstrates three equivalent RGB-tilings in $\Sigma'$ of $T\!D$, i.e., a necessary skeleton in $\Sigma'$ provided all edges along $\Omega$ blue. Let us redraw that skeleton in $\Sigma'$ but leave every edge inside $\Sigma$ black as the first graph in Figure~\ref{fig:3deg5bb}.  There must be six vertices, say $u_1, u_2, \ldots, u_6$, surrounding $T\!D$. By Theorem~\ref{thm:4deg5inDiamond}, $\deg(d, v_2, v_4)\ge 6$. {\bf So, here we assume the minimum situation $(\ast)$: $\deg(d, v_2, v_4)= 6$ and $\deg(v_1, v_3, v_5)= 5$.} We shall consider a new topic for discussion $\hat{T\!D}$ who has the vertex set $\{a,b,c,d,v_1,\ldots,v_5\}$ and the requirement as the situation $(\ast)$; and then a new $\hat{\Omega}:=u_1$-$u_2$-$\ldots$-$u_6$; also new $\hat{\Sigma}$ and $\hat{\Sigma}'$ inside and outside of $\hat{\Omega}$ respectively. Please, see the second graph in Figure~\ref{fig:3deg5bb}. 

The second graph is the only feasible RGB-tiling on $\Sigma'$ (not only on $\hat{\Sigma}'$) under synonyms. Particularly all edges in $\hat{\Omega}$ must be blue. Now we find a blue canal ring $bCL$ in between $\Omega$ and $\hat{\Omega}$. After performing ECS on this $bCL$, we obtain a new RGB-tiling on $\Sigma'$ shown as the third graph in Figure~\ref{fig:3deg5bb}. Not only that, we can do $\Sigma$-adjustment by coloring paths $v_1$-$a$-$b$-$v_5$ and $v_2$-$c$-$v_4$ red. Finally we obtain a brand new R-tiling without red odd-cycle. Please, check the only red cycle crossing $\hat{\Sigma}$. It must be even length.

	\begin{lemma}  \label{thm:5inTri565656}
Let $EP\in e\mathcal{MPGN}4$ with $a,b,c\in V(EP)$ in a triangle and $\deg(a,b,c)=5$. See Figure~\ref{fig:3deg5bb}. It is impossible that the surrounding vertices along $\Omega:=d$-$v_1$-$v_2$-$v_3$-$v_4$-$v_6$ have degree property: $\deg(d,v_2,v_4)=6$ and $\deg(v_1,v_3,v_5)=5$.	
	\end{lemma}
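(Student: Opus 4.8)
The plan is to reach a contradiction by constructing, under the minimality hypothesis $(\ast)$ that $\deg(d,v_2,v_4)=6$ and $\deg(v_1,v_3,v_5)=5$, an R-tiling on all of $EP$ carrying no red odd-cycle; Theorem~\ref{RGB1-thm:4RGBtilingGeneralization}(d) then upgrades any such R-tiling$^\ast$ on the MPG $EP$ to a genuine 4-coloring, which is absurd because $EP\in e\mathcal{MPGN}4$. I would open by importing the forced skeleton of Lemma~\ref{thm:deg5inT6blue}: in the congruence class $[T_\alpha^{5^3}]$ every edge of the hexagon $\Omega=d$-$v_1$-$v_2$-$v_3$-$v_4$-$v_5$-$d$ is blue, and the three tips $v_1,v_3,v_5$ are simultaneously red-connected and green-connected inside $\Sigma'$. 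As in the proof of Theorem~\ref{thm:4deg5inDiamond}, I would first clear away the degenerate coincidences among the six outer neighbours $u_1,\dots,u_6$ of $\hat{T\!D}$ (and between them and $T\!D$) by Lemma~\ref{RGB1-thm:nontrivial3} and Theorem~\ref{RGB2-thm:trivial4cycle}, so that $\hat\Omega=u_1$-$\cdots$-$u_6$ is an honest $6$-cycle.

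The core of the argument is a colour-rigidity step. Each tip $v_i$ ($i\in\{1,3,5\}$) has $\deg(v_i)=5$, so its link is a $5$-cycle whose three inner neighbours (the incident triangle vertex together with the two adjacent degree-$6$ vertices of $\Omega$) are already fixed; its two remaining neighbours therefore lie on $\hat\Omega$. Propagating the blue $\Omega$, the red/green skeleton, and the degree-$6$ constraints on $d,v_2,v_4$ outward, I would show that up to synonyms the RGB-tiling is uniquely determined on the entire annulus between $\Omega$ and $\hat\Omega$, and in particular that every edge of $\hat\Omega$ is forced blue. The exclusion of the competing colourings here is driven by the degree-$5$ tangling property (Lemma~\ref{RGB2-thm:deg5tangling}) together with Lemma~\ref{thm:Coalpha}, exactly the mechanism already exploited in Remark~\ref{re:S3}. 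This is the step I expect to be the main obstacle: the assertion that the displayed tiling is the \emph{only} feasible one on $\Sigma'$ under synonyms must be checked against every admissible $Co(\hat\Omega)$, and tracking which Kempe chains survive in each case is where the genuine bookkeeping lives.

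Granting that all of $\hat\Omega$ is blue, the annular region between $\Omega$ and $\hat\Omega$ supports a blue canal ring $bCL$, and I would perform ECS on $bCL$ to pass to a new RGB-tiling on $\Sigma'$. I would then apply a $\Sigma$-adjustment of type~(2) from Subsection~\ref{sec:SigmaAdjust}: holding $Co(\Omega)$ fixed, recolour the interior of $\Sigma$ so that the paths $v_1$-$a$-$b$-$v_5$ and $v_2$-$c$-$v_4$ become red. Splicing this interior red pattern onto the exterior red chains of the resulting (post-switch) tiling produces a single R-tiling defined on all of $EP$.

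It remains to certify that this R-tiling is an R-tiling$^\ast$. The interior red edges form the two disjoint paths $v_1$-$a$-$b$-$v_5$ and $v_2$-$c$-$v_4$, of lengths $3$ and $2$, which cannot close into a cycle on their own; hence every red cycle must cross $\hat\Omega$ and splice an interior red path to an exterior red chain of the post-switch skeleton. I would enumerate these finitely many candidate cycles and compute each length as (interior path) $+$ (exterior chain), reading the parity of each chain off the tiling left after the $bCL$-switch via the $(\#r,\#g,\#b)$ parity relation of Lemma~\ref{RGB1-thm:evenoddRGB} — the very purpose of switching $bCL$ first is to arrange these parities correctly. Exactly as in the closing parity computation of Theorem~\ref{thm:4deg5inDiamond}, where the candidate $K_r|_{v_1}^{v_3}\cup K_r|_{v_5}^{v_3}\cup\{dv_1,dv_5\}$ turned out even, each candidate here should close to even length, so no red odd-cycle survives; Theorem~\ref{RGB1-thm:4RGBtilingGeneralization}(d) then converts this R-tiling$^\ast$ into a 4-coloring of $EP$, contradicting $EP\in e\mathcal{MPGN}4$ and ruling out $(\ast)$. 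This parity bookkeeping is the second place I would proceed with care: one must confirm that the listed cycles exhaust all red cycles — in particular that no stray red connection leaks through $\hat\Sigma'$ — and that the $bCL$-switch has indeed rendered every one of them even, reconciling in particular the odd-length interior path $v_1$-$a$-$b$-$v_5$ with the parity of the chain it meets.
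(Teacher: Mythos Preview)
Your proposal is correct and follows essentially the same route as the paper: start from the all-blue $\Omega$ skeleton of $[T_\alpha^{5^3}]$, argue that under $(\ast)$ the annulus is rigid so that $\hat\Omega$ is also all blue, switch along the blue canal ring between $\Omega$ and $\hat\Omega$, then $\Sigma$-adjust by recolouring $v_1$-$a$-$b$-$v_5$ and $v_2$-$c$-$v_4$ red and verify the resulting R-tiling has no odd red cycle. Your identification of the two delicate points (the rigidity forcing $\hat\Omega$ blue, and the final parity check) is exactly where the paper's argument lives as well.
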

   \begin{figure}[h]
   \begin{center}
   \includegraphics[scale=0.9]{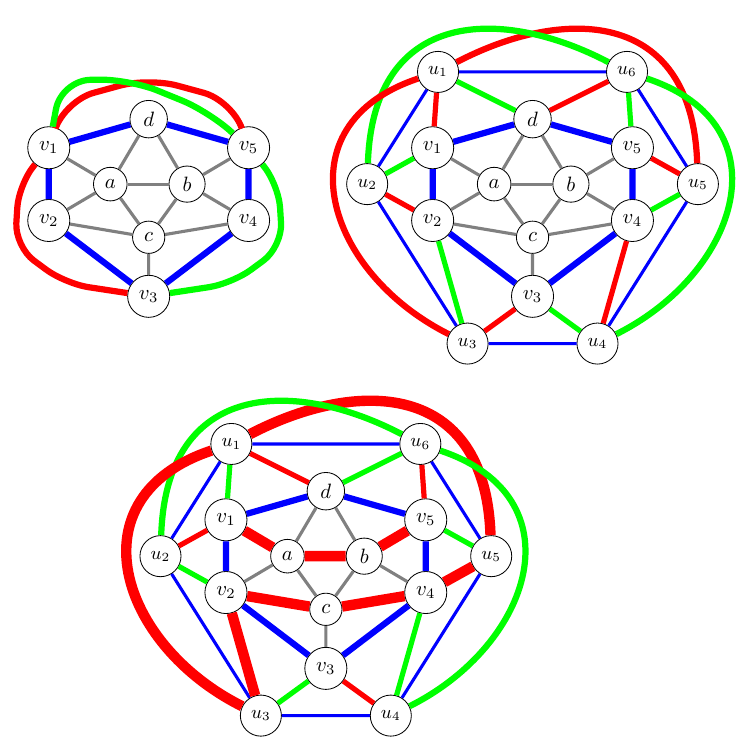}
   \end{center}
   \caption{New $\hat{T\!D}$: a union of $5^3$  and the surrounding $(56)^3$}     \label{fig:3deg5bb} 
   \end{figure}

Our further study shows a more stronger property as follows:     

	\begin{lemma}  \label{thm:5inTri56}
Let $EP\in e\mathcal{MPGN}4$ with $a,b,c\in V(EP)$  in a triangle and $\deg(a,b,c)=5$. Please, refer to the second graph in Figure~\ref{fig:3deg5bb} and the most part of the hypothesis in Lemma~\ref{thm:5inTri565656}.  This time we only assume  $\deg(v_1)=5$ and $deg(d)=6$ in addition, while $deg(v_2, v_4)\ge 6$ (by Theorem~\ref{thm:4deg5inDiamond}) and $deg(v_3, v_5)\ge 5$ are given automatically. It is impossible $EP\in e\mathcal{MPGN}4$.	
	\end{lemma}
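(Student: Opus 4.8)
The plan is to reach a contradiction by the same route used throughout this section: manufacture an \emph{R-tiling$^\ast$} on the MPG $EP$ (a red tiling with no red odd-cycle), which by Theorem~\ref{RGB1-thm:4RGBtilingGeneralization}(d) induces a 4-coloring of $EP$ and so contradicts $EP\in e\mathcal{MPGN}4$. I keep exactly the part of the hypothesis of Lemma~\ref{thm:5inTri565656} that does not mention the degrees of $v_2,v_3,v_4,v_5$, namely the forced skeleton of Lemma~\ref{thm:deg5inT6blue}: the three congruent tilings $[T_\alpha^{5^3}]$ of Figure~\ref{fig:3deg5b} color every edge of $\Omega$ blue and make $v_1,v_3,v_5$ simultaneously red-connected and green-connected inside $\Sigma'$. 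This much holds no matter what $\deg(v_2),\dots,\deg(v_5)$ are.

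First I would use the two new local constraints $\deg(d)=6$ and $\deg(v_1)=5$ to pin down the neighborhood of the edge $dv_1$. Since $d$ is adjacent to the three degree-5 vertices $a,b,v_1$ and to $v_5$, it has exactly two more neighbors in $\Sigma'$; since $v_1$ is adjacent to $a,d,v_2$, it has exactly two more neighbors in $\Sigma'$, one of them shared with $d$. This reproduces precisely the $d,v_1$-corner of the minimal hexagon $\hat{\Omega}$ of Lemma~\ref{thm:5inTri565656}, and, with the degree-5 tangling property at $v_1$ (Lemma~\ref{RGB2-thm:deg5tangling}), it forces a short blue canal ring $bCL$ hugging that corner between $\Omega$ and $\hat{\Omega}$, exactly as in the second and third graphs of Figure~\ref{fig:3deg5bb}. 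I would then perform ECS on $bCL$ and apply the same $\Sigma$-adjustment as in Lemma~\ref{thm:5inTri565656} (recoloring the paths $v_1$-$a$-$b$-$v_5$ and $v_2$-$c$-$v_4$ red and treating green and blue as black), obtaining a global red tiling on $EP$ whose red edges inside $\Sigma'$ are just the preserved Kempe chains joining $v_1,v_3,v_5$.

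The ECS on $bCL$ is exactly the move that corrects parity, so that---as in Lemma~\ref{thm:5inTri565656}---the only red cycles crossing $\hat{\Sigma}$ come out even and the red tiling is an R-tiling$^\ast$; this finishes the contradiction. The \textbf{main obstacle} is that, unlike in Lemma~\ref{thm:5inTri565656}, the degrees of $v_2,v_3,v_4,v_5$ are now free, so there is no clean hexagonal $\hat{\Omega}$ to fence off stray red cycles that might thread the high-degree vertices $v_2$ and $v_4$. The real work is to show that the blue and red Kempe chains of the skeleton still separate $v_2,v_4$ from the red component of $v_1,v_3,v_5$, so that the red path $v_2$-$c$-$v_4$ cannot close up into an odd red cycle through $\Sigma'$, and that $\deg(v_1)=5$---via Lemma~\ref{RGB2-thm:deg5tangling} together with the exclusion used in Lemma~\ref{thm:Coalpha}---is precisely what blocks the one remaining short red odd-cycle at the corner $d,v_1$. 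Establishing this blocking from the local data at $d$ and $v_1$ plus the red-connectivity of $v_1,v_3,v_5$ alone, without any control over the far side of $\Omega$, is the step that demands the most care.
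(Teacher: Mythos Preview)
The paper does not prove this lemma at all: immediately after the statement it says ``This new result will be proved in the near future.'' So there is no proof in the paper to compare against; whatever argument eventually appears is deferred to a later installment.

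As for your plan on its own merits: the overall strategy (manufacture an R-tiling$^\ast$ on $EP$ from the forced all-blue $\Omega$ and the red/green connectivity of $v_1,v_3,v_5$, then invoke Theorem~\ref{RGB1-thm:4RGBtilingGeneralization}(d)) is exactly in the spirit of Lemma~\ref{thm:5inTri565656}, and that part is sound. But the step where you produce a ``short blue canal ring $bCL$ hugging that corner'' is not justified and is likely the genuine obstruction. In an MPG every canal line is a \emph{ring}; in Lemma~\ref{thm:5inTri565656} the blue canal ring $bCL$ exists precisely because the full $(56)^3$ degree pattern forces a complete second hexagon $\hat{\Omega}$ of blue edges, so that the annulus between $\Omega$ and $\hat{\Omega}$ is a clean blue corridor all the way around. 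With only $\deg(d)=6$ and $\deg(v_1)=5$ fixed, you control at most two or three triangles of that annulus; on the far side (near $v_3,v_4$) the canal structure is unconstrained, and there is no reason the blue canal line through your corner should close up into a small ring rather than wander off through $\Sigma'$. Performing ECS on a large, uncontrolled blue canal ring would scramble the red Kempe chains $K_r|_{v_1}^{v_3}$, $K_r|_{v_1}^{v_5}$ in $\Sigma'$ that you need for the parity count.

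You correctly flag the second difficulty---that the $\Sigma$-adjusted red path $v_2$-$c$-$v_4$ might close into an odd cycle through $\Sigma'$ when $\deg(v_2),\deg(v_4)$ are large---but you do not actually resolve it; invoking ``the blue and red Kempe chains of the skeleton still separate $v_2,v_4$'' is a hope, not an argument, since in $[T_\alpha^{5^3}]$ the vertices $v_2,v_4$ lie on $\Omega$ with blue edges on both sides and are \emph{not} a priori separated from anything by the red skeleton. So your proposal is a reasonable outline of where one would \emph{like} the argument to go, but both the existence of the local $bCL$ and the odd-cycle exclusion at $v_2,v_4$ are genuine gaps, and the paper's deferral suggests the authors do not regard them as routine either.
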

	
This new result will be proved in the near future.

\section{No two degree 5 vertices adjacent; We wish.}  \label{sec:deg5Adjacent}

We have a dream to prove the following conjecture that covers all previous results in this paper. Once we thought we did it, but a bug came out. However, we would like demonstrate our false proof.    
   \begin{conjecture}  \label{thm:deg55NoWay}
Are there any two degree 5 vertices adjacent in $EP$? No way!
   \end{conjecture}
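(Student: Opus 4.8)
The plan is to argue by contradiction, running the very same atlas machinery that settled the diamond case. Suppose $a,b$ are adjacent with $\deg(a,b)=5$, so the picture is exactly $T\!D:=(\{a,b\};\deg(a,b)=5)$ with border $\Phi:=v_1$-$v_2$-$c$-$v_4$-$v_5$-$d$-$v_1$, inner region $\Sigma=\Phi\cup T\!D$, and exterior $\Sigma'=EP-T\!D$. Since $EP\in e\mathcal{MPGN}4$, Theorem~\ref{RGB2-thm:EPediamond2} guarantees the two primary RGB-tilings $[T_\alpha^{55}]$ and $[T_\beta^{55}]$, and by Lemma~\ref{thm:2deg5} together with Theorem~\ref{thm:atlas} every element of $ATLAS_N=\{[S_0],\ldots,[S_5],[S_x],[X_2]\}_{\text{sym}}$ is forced to exist and to be congruent to the rest. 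The disjointness $ATLAS_N\cap ATLAS_4=\emptyset$ of Lemma~\ref{thm:NoIntersection} tells us only that no single locally 4-colorable pattern on $\Sigma$ survives; to kill the vertex-pair outright I must manufacture a \emph{global} contradiction, not merely a local mismatch.

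First I would fix the representative $[S_x]$ of Figure~\ref{fig:S1andS8}, whose skeleton in $\Sigma'$ is the cleanest, consisting of the two single-colour Kempe chains $K_g|_c^{v_1}$ and $K_g|_c^{v_5}$. I record, for each crossing chain, which pair of $\Omega$-vertices it joins. Then, imitating the proofs of Theorem~\ref{thm:4deg5inDiamond} and Lemma~\ref{thm:5inTri565656}, I perform a $\Sigma$-adjustment of the type (2) catalogued in Subsection~\ref{sec:SigmaAdjust}: discard the abandoned (yellow) edges and re-colour the inner edges of $\Sigma$ with a fresh single-colour (green) tiling so that the two degree-5 vertices $a,b$ and the hexagon $\Phi$ carry a green pattern compatible with the retained skeleton $K_g\cap\Sigma'$. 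The target is an honest G-tiling on all of $EP$ with no abandoned edge remaining, so that the only green cycles that can appear must cross $\Sigma$.

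The decisive computation is then a parity check. Each candidate green cycle is the concatenation of one skeleton path $K_g|_u^w\subset\Sigma'$ with a green path I route inside $\Sigma$; its length is the sum of the two, and since $\Phi$ is a 6-cycle and the two interior vertices have degree exactly $5$, the admissible inside green routings are few and can be tabulated exactly as the red prototypes of Figure~\ref{fig:M4colorable} were tabulated in the 4-colorable analysis. I would verify that at least one inside routing makes every crossing green cycle even; a G-tiling$^\ast$ then results, and by the colour symmetry used throughout together with Theorem~\ref{RGB1-thm:4RGBtilingGeneralization}(d) it induces a 4-coloring of $EP$, contradicting $EP\in e\mathcal{MPGN}4$.

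The hard part --- and, I expect, the exact location of the advertised bug --- is the demand for \emph{uniformity} across the hidden bundle. A single symbol $K_g|_u^w$ is only a representative: by the tangling phenomenon of Section~\ref{RGB2-sec:tanglingProperty} it may stand for a whole family of green $u$-$w$ paths in $\Sigma'$ of differing lengths, so an inside routing that closes one path to an even cycle can close a sibling path to an odd cycle. Unless I can exhibit a parity invariant of the skeleton that is insensitive to which path in the bundle is chosen --- or, failing that, stratify the argument over all types of $Co(\Phi)$ and rule out the bad-parity bundle as incompatible with $\deg(a,b)=5$ via the tangling constraints behind Lemma~\ref{RGB2-thm:deg5tangling} --- the re-arrangement cannot be guaranteed to annihilate every green odd-cycle simultaneously. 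Securing such an invariant, or isolating the configuration where it genuinely fails, is precisely where this conjecture stands or falls.
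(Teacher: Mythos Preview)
This statement is a \emph{conjecture}; the paper does not prove it and explicitly labels its own argument a ``false proof.'' Your proposal is likewise incomplete, but by an entirely different route, so a comparison is worthwhile.

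The paper's attempted proof does not use a $\Sigma$-adjustment at all. It manufactures a smaller MPG $\hat{EP}$: delete $a$ and $b$, merge $v_2=v_4$ (legitimate since $v_2v_4\notin E(EP)$ by Theorem~\ref{RGB2-thm:trivial4cycle}), and insert a new edge $v_1v_5$. From $[\underline{S_1}]$ one reads off two of the three Type~A $v_1v_5$-diamonds on $\hat{EP}$, and a restoration argument excludes the third. The intended conclusion was that $\hat{EP}$ admits only Type~A diamonds and is therefore non-4-colorable, contradicting $|\hat{EP}|<|EP|$. The bug the paper itself names is that ``only Type~A'' is \emph{not} a sufficient condition for non-4-colorability (this is precisely False Conjecture~\ref{RGB2-thm:4bsufnes}); in fact $[S_2]$ furnishes a Type~C diamond for $\hat{EP}$, and by Theorem~\ref{RGB2-thm:4ColorableIfandOnlyIf} the graph $\hat{EP}$ is 4-colorable. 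So the paper's obstruction lives in the theory of $e$-diamond types on a \emph{contracted} graph, not in a parity check on the original $EP$.

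Your approach --- fix $[S_x]$, retain its skeleton in $\Sigma'$, and re-tile $\Sigma$ in a single colour so that every crossing cycle closes up even --- is the natural extrapolation of the proof of Theorem~\ref{thm:4deg5inDiamond}. The genuine gap is the sentence ``I would verify that at least one inside routing makes every crossing green cycle even'': you never carry this out, and it is exactly here that the two-vertex $T\!D$ diverges from the four-vertex diamond. With four degree-$5$ vertices filling $\Omega$, the interior is rigid enough that one re-tiling (Figure~\ref{fig:4deg5}) kills every crossing cycle; with only $a,b$ inside a hexagonal $\Phi$, the few admissible interior green tilings compatible with a given $Co(\Phi)$ need not neutralise all skeleton parities at once, and no amount of tabulation in the style of Figure~\ref{fig:M4colorable} forces it. A side remark: the obstruction you isolate --- that $K_g|_u^w$ might hide paths of both parities --- is probably not the actual failure point. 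Each $[S_i]\in ATLAS_N$ restricts to an RGB-tiling on $\Sigma'$ whose only monochromatic odd-cycles would have to pass through the abandoned edges in $\Sigma$; hence the green subgraph in $\Sigma'$ is bipartite and every $u$--$w$ bundle has a single well-defined parity. The honest difficulty is the \emph{existence} of a compatible interior routing, and that is a different --- and, on the evidence of the paper's own failed contraction argument, genuinely hard --- obstruction.
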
 

Let $EP\in e\mathcal{MPGN}4$. The given situation is that $T\!D:=(\{a,b\};\deg(a,b)=5)$ or $55$ in short. Then we have  $\Omega:=d$-$v_1$-$v_2$-$c$-$v_4$-$v_5$-$d$.

Now we create a new MPG $\hat{EP}$ from $EP$. We remove vertices $a$ and $b$, and then merge $v_2=v_4$. Notice that $v_2$ and $v_4$ are not adjacent in $\Sigma'$; otherwise the 4-cycle  $a$-$b$-$v_4$-$v_2$-$a$ must form a diamond, but vertex $c$ say no.  Please, see Theorem~\ref{RGB2-thm:trivial4cycle}.  This merging also makes $v_2c=v_4c$ and this fact will cause $v_2c$ and $v_4c$ have same edge-color in the original $EP$. This merging also creates a new 4-outer facet $\Phi:=d$-$v_1$-$v_2$-$v_5$-$d$. In addition, we set a new edge $v_1v_5$ for $\hat{EP}$. 

Thanks for the existence of $[\underline{S_1}]$ on $\Sigma'$ of $EP$. Thus, in Figure~\ref{fig:merging55} we have two synonyms RGB-tilings, (A) and (B), on $\hat{EP}$, which are of course Type A with $e$-diamond and $e=v_1v_5$. 
   \begin{figure}[h] 
   \begin{center}
   \includegraphics[scale=.81]{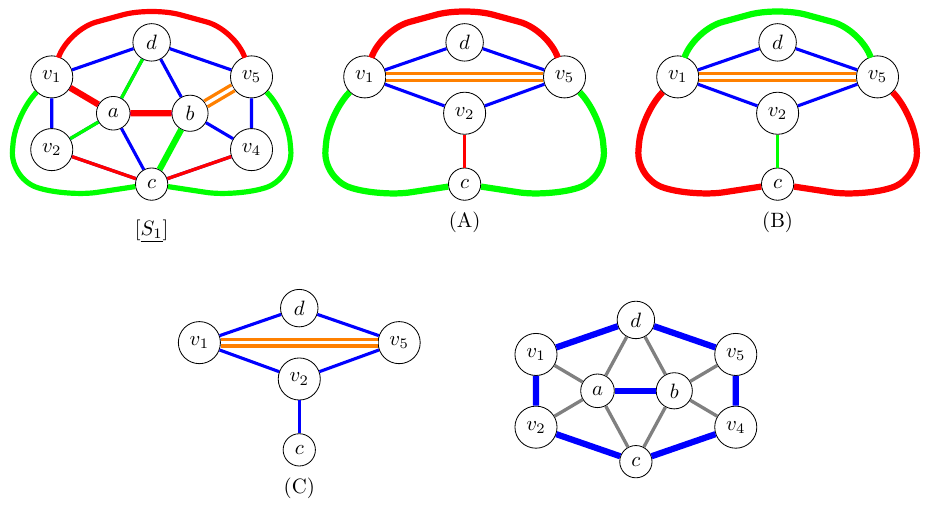}
   \end{center}
   \caption{Merging $v_2=v_4$} \label{fig:merging55}
   \end{figure}
 
The case (C) is the last one we need to consider Type A with $e$-diamond on $\hat{EP}$. However, it does not exist because of the last graph is a 4-colorable B-tiling on $EP$ in Figure~\ref{fig:merging55}. Clearly this B-tiling is restored from (C). 

If we consider Type A condition is a sufficient condition for this $\hat{EP}$ being non-4-colorable, then we obtain a contradiction for $|\hat{EP}| < |EP|$. What a nice proof for  Conjecture~\ref{thm:deg55NoWay}. Unfortunately Type A condition is not a sufficient condition. Please, see False Conjecture~\ref{RGB2-thm:4bsufnes}.

Thank for $ATLAS_N(EP;T\!D)$ in Subsection~\ref{sec:ATLAS}.
We find that $[S_2]$ can offer a Type C $e$-diamond for $\hat{EP}$. Please, see Figure~\ref{fig:merging55b}. By Theorem~\ref{RGB2-thm:4ColorableIfandOnlyIf} or directly by the second graph, $\hat{EP}$ is 4-colorable.
   \begin{figure}[h] 
   \begin{center}
   \includegraphics[scale=.86]{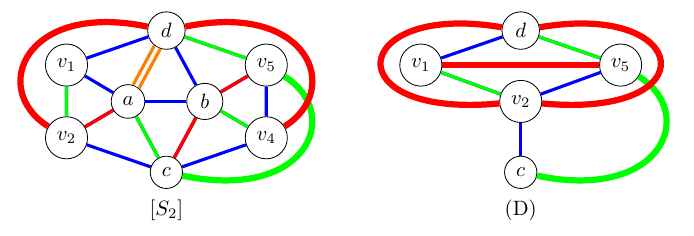}
   \end{center}
   \caption{Merging $v_2=v_4$} \label{fig:merging55b}
   \end{figure}

\section{What are next steps by this renewal approach}

Study $T\!D$, many different $T\!D$. For instance $T\!D$ consists of two or three adjacent vertices of degree 5 or 6. Actually studying the distribution of degrees along $\Omega$ or even the secondary layer $\Omega^2$, especially those vertices of degrees at least 7, is our goal. 

The setting of $\hat{T\!D}$ in Subsection~\ref{sec:3degree5cont} is the minimum situation for $\Omega$. There are many different settings for $\Omega$ to discuss. That will be a new chapter of our study in the near future.

The more vertices in $T\!D$ are or precisely the larger $\sum_{v\in T\!D}\deg(v)$ is, the more complex $ATLAS_N(EP-\{\ast\})$ is. To reduce complexity, Lemma~\ref{thm:2deg5} uses congruence relation between elements in $ATLAS_N(EP-\{\ast\})$.

\bibliographystyle{amsplain}

\end{document}